\documentclass[11pt, a4]{article} 
\usepackage{times}
\usepackage{amsmath,amsthm,amsfonts, amssymb, amstext}
\usepackage[margin=2.5cm]{geometry}
\usepackage{enumitem}
\usepackage{soul}

 \usepackage{authblk}    
 \usepackage[backend=bibtex]{biblatex}
\addbibresource{LuzVecWar1.bib}

\usepackage[skip=8pt]{parskip}

\usepackage[dvips]{graphicx} 
\usepackage{mathrsfs}
\usepackage{hyperref}
\usepackage{tocloft}                
\usepackage[dvipsnames]{xcolor}
       
\usepackage{marvosym}
 
 \usepackage{constants}         
 \newconstantfamily{Ctilde}{symbol=\widetilde C}
 
\newcommand{\foot}[1]{\mbox{}\marginpar{\raggedleft\hspace{0pt}\tiny #1}}
\newcommand{\footsl}[1]{\foot{SL:  #1}}

\makeatother
\newlength\mylen

\renewcommand\cftpartpresnum{Part~}
\settowidth\mylen{\bfseries\cftpartpresnum\cftpartaftersnum}
\addtolength\cftpartnumwidth{\mylen}
 \usepackage{hyperref}
\usepackage{varioref}  
\usepackage{wrapfig}                                                 
\usepackage{color}
\usepackage{psfrag}

\newcommand{\todelete}[1]{{\color{red}#1}} 
\newcommand{\toadd}[1]{{\color{blue}#1}} 
\newcommand{\comment}[1]{{\color{OliveGreen}#1}}   
\newcommand{\nocomment}[1]{{\color{red}#1}}  

 \newcommand{\quand}{\quad\text{ and } \quad}
\theoremstyle{plain}

\newtheorem{theorem}{Theorem}[section]
\newtheorem{proposition}[theorem]{Proposition}
\newtheorem{lemma}[theorem]{Lemma}
\newtheorem{sublemma}[theorem]{Sublemma}
\newtheorem{corollary}[theorem]{Corollary}

\newtheorem*{theorem*}{Theorem}
\newtheorem*{main theorem}{Main Theorem}
\newtheorem*{corollary*}{Corollary}
\newtheorem*{conjecture*}{Conjecture}

\theoremstyle{definition}
\newtheorem{definition}[theorem]{Definition}
\newtheorem*{definition*}{Definition}

\theoremstyle{remark}
\newtheorem{remark}{Remark}[section]
\newtheorem*{remark*}{Remark}
\newtheorem*{remarks*}{Remarks}

\newcommand{\nc}{\newcommand}

\nc{\inmath}{\ensuremath{\in}}

\nc{\abs}[1]{\ensuremath{|#1|}}
\nc{\stars}{\ensuremath{ (*)_{n}^{s} } }
\nc{\starc}{\ensuremath{ (*)_{n}^{c} } }
\nc{\ang}{\sphericalangle}

\renewcommand{\cal}[2]{\ensuremath{\mathcal{#1}^{(#2)}}}
\nc{\tcal}[2]{\ensuremath{\mathcal{\tilde{#1}}^{(#2)}}}
\nc{\ttcal}[2]{\ensuremath{\mathcal{\tilde\tilde{#1}}^{(#2)}}}
\nc{\hcal}[2]{\ensuremath{\mathcal{\widehat{#1}}^{(#2)}}}

\nc{\EG}[1]{\ensuremath{\text{(EG)}_{#1}}}
\nc{\BD}[1]{\ensuremath{\text{(BD)}_{#1}}}
\nc{\HV}[1]{\ensuremath{\text{(HV)}_{#1}}}
\nc{\BoD}[1]{\ensuremath{\text{(BoD)}_{#1}}}
\nc{\AD}[1]{\ensuremath{\text{(AD)}_{#1}}}
\nc{\BDT}[1]{\ensuremath{\text{(BDT)}_{#1}}}
\nc{\BP}[1]{\ensuremath{\text{(BP)}_{#1}}}
\nc{\GC}[1]{\ensuremath{\text{(GC)}_{#1}}}

\nc{\negfour}{\!\!\!\!}
\nc{\negsix}{\!\!\!\!\!\!}

\nc{\D}[1][]{\ensuremath{\Delta^{#1}}}
\nc{\Dp}[1][]{\ensuremath{\Delta_{+}^{#1}}}

\nc{\eg}{\ensuremath{\varepsilon^{\gamma}}}
\nc{\egd}{\ensuremath{\varepsilon^{\gamma+\delta}}}
\nc{\egdi}{\ensuremath{\varepsilon^{\gamma+\delta+\iota}}}
\nc{\eggdd}{\ensuremath{\varepsilon^{2(\gamma+\delta)}}}
\nc{\egddi}{\ensuremath{\varepsilon^{\gamma+2\delta+\iota}}}
\nc{\eggddi}{\ensuremath{\varepsilon^{2\gamma+2\delta+\iota}}}
\nc{\el}{\ensuremath{e^{\frac{1}{\lambda}}}}
\nc{\ea}[1]{\ensuremath{e^{-\alpha #1}}}
\nc{\eb}[1]{\ensuremath{e^{-2 \alpha #1}}}
\nc{\ega}[1]{\ensuremath{\eg e^{-\alpha #1}}}
\nc{\egb}[1]{\ensuremath{\eg e^{-2\alpha #1}}}

\nc{\gd}{\ensuremath{{\gamma+\delta}}}
\nc{\gdi}{\ensuremath{{\gamma+\delta+\iota}}}
\nc{\gddi}{\ensuremath{{\gamma+2\delta+\iota}}}
\nc{\ggddi}{\ensuremath{{2\gamma+2\delta+\iota}}}

\nc{\be}{\ensuremath{b\varepsilon_{*}}}

             
\nc{\df}[1][x]{\varphi'(#1)}
\nc{\adf}[1][x]{|\varphi'(#1)|}
             

\nc{\pfi}[2]{\ensuremath{\partial_{#1}\Phi_{#2}}}
\nc{\pd}[2]{\ensuremath{\partial_{#1}^{#2}}}
\nc{\pdtil}[2]{\ensuremath{\tilde\partial_{#1}^{#2}}}

\nc{\F}[2]{\ensuremath{\Phi^{#1}(#2)}}
\nc{\Fxi}[1]{\Phi(\xi_{#1})}
\nc{\Fxit}[1]{\Phi(\tilde\xi_{#1})}

\nc{\DFz}[1][]{\ensuremath{D\Phi (z_{#1})}}
\nc{\DFzv}[2][]{\ensuremath{D\Phi (z_{#1})\cdot{#2}}}
\nc{\DFxi}[1]{\ensuremath{D\Phi (\xi_{#1})}}
\nc{\DFxiv}[2]{\ensuremath{D\Phi 
                                      (\xi_{#1})\cdot{#2}}}
\nc{\DFxit}[1]{\ensuremath{D\Phi (\tilde{\xi}_{#1})}}
\nc{\DFxitv}[2]{\ensuremath{D\Phi({\tilde\xi}_{#1})\cdot{#2}}}

\nc{\w}[1]{\ensuremath{w_{#1}}}
\nc{\wt}[1]{\ensuremath{{\tilde{w}}_{#1}}}
\nc{\om}[1]{\ensuremath{\omega_{#1}}}
\nc{\omt}[1]{\ensuremath{{\tilde{\omega}}_{#1}}}
\nc{\sig}[1]{\ensuremath{\sigma_{#1}}}
\nc{\sigt}[1]{\ensuremath{{\tilde{\sigma}}_{#1}}}

\nc{\vect}[1]{\ensuremath{
                      ({\tilde{\omega}}_{#1}, {\tilde{\sigma}}_{#1}) }}

\nc{\thetasub}[1]{\ensuremath{\theta_{#1}}}
\nc{\thetatsub}[1]{\ensuremath{\theta_{#1}}}

\nc{\z}[1]{\ensuremath{z_{#1}}}
\nc{\xisub}[1]{\ensuremath{\xi_{#1}}}
\nc{\xit}[1]{\ensuremath{\tilde{\xi}_{#1}}}
\nc{\xitsub}[1]{\ensuremath{\tilde\xi_{#1}}}
\nc{\g}[1]{\ensuremath{\gamma_{#1}}}
\nc{\gt}[1]{\ensuremath{\tilde{\gamma}_{#1}}}

\nc{\coni}[1]{\ensuremath{e^{(#1)} }}
\nc{\exi}[1]{\ensuremath{f^{(#1)} }}
\nc{\con}[2]{\ensuremath{e^{(#1)}(#2) }}
\nc{\ex}[2]{\ensuremath{f^{(#1)}(#2) }}
\nc{\conim}[3]{\ensuremath{e^{(#1)}_{#2}(#3) }}
\nc{\exim}[3]{\ensuremath{f^{(#1)}_{#2}(#3) }}
\nc{\conimi}[2]{\ensuremath{e^{(#1)}_{#2} }}
\nc{\eximi}[2]{\ensuremath{f^{(#1)}_{#2} }}

\nc{\crit}[1]{\ensuremath{z^{(#1)}}}
\nc{\critim}[2]{\ensuremath{z^{(#1)}_{#2}}}
\nc{\critx}[1]{\ensuremath{x^{(#1)}}}
\nc{\critimx}[2]{\ensuremath{x^{(#1)}_{#2}}}
\nc{\zetacrit}[1]{\ensuremath{\zeta^{(#1)}}}
\nc{\zetacritim}[2]{\ensuremath{\zeta^{(#1)}_{#2}}}
\nc{\critset}[1]{\ensuremath{\mathcal{C}^{(#1)}}}
\nc{\critsetim}[2]{\ensuremath{\mathcal{C}^{(#1)}_{#2}}}

\begin{document}
\title{
Finite Time Hyperbolic Coordinates}
\date{}

\author{Stefano Luzzatto, Dominic Veconi, Khadim War}

\maketitle

\begin{abstract}
We define \emph{finite-time hyperbolic coordinates}, describe their geometry, and prove various results on both their convergence as the time scale increases, and on their variation in the state space. Hyperbolic coordinates reframe the classical paradigm of hyperbolicity: rather than define a hyperbolic dynamical system in terms of a splitting of the tangent space into stable and unstable subspaces, we define hyperbolicity in terms of the \emph{co-eccentricity} of the map. The co-eccentricity describes the distortion of unit circles in the tangent space under the differential of the map. Finite-time hyperbolic coordinates have been used to demonstrate the existence of SRB measures for the H\'enon map; our eventual goal is to both elucidate these techniques and to extend them to a broad class of nonuniformly and singular hyperbolic systems. 

 \end{abstract}	
	
\setcounter{tocdepth}{2}

  \label{intro}


\section{Introduction}

\subsection{Physical Measures}
One of the most interesting and challenging problems in the theory of dynamical systems is that of describing the \emph{statistical} properties  of a map \( f: M \to M \) on some (typically compact) metric space $M$. Given an initial condition \( x\in M \), for every \( n \geq 1 \), we can define the \emph{empirical measure} 
\[
e_{n}(x) := \frac{1}{n} \sum_{k=0}^{n-1}\delta_{f^{i}(x)}
\]
associated to the orbit of \( x \). Notice that \( e_{n}(x) \) is an atomic probability measure uniformly distributed on the first \( n \) points of the orbit of  \( x \). The measure $e_n$ describes the frequency with which the orbit visits various regions in  \( M \). If the sequence of empirical measures \emph{converges}  in the weak topology to a probability measure $\mu$ as $n \to \infty$,  then \( \mu\) can be thought of as describing the \emph{asymptotic statistical  distribution} of the orbit. In this situation, we say that the point \( x \) has statistical behavior described by the probability distribution $\mu$. We define the \emph{basin} of \( \mu \) as the set $B_\mu$ of points whose asymptotic statistical distribution is described by the measure \( \mu\). Formally,
\[
 \mathcal B_{\mu}:=\{x \in M: e_{n}(x) \to \mu\}.
 \]
  If \( M \)  is endowed with a normalized reference probability measure \( m \) (often a Lebesgue measure or Riemannian area form), we say that \( \mu \) is a \emph{physical measure} if 
  \[ m( \mathcal B_{\mu})>0
  \] 
  since this means that there is a ``physically observable'' set of points whose asymptotic statistical distribution is described by the given measure \( \mu \). If there is some finite set of probability measures such that the union of the corresponding basins has full Lebesgue measure, then we can say in some sense that we have succeeded in describing the dynamics from a probabilistic/statistical point of view. A major and ongoing line of research over the last several decadesis establishing the existence and uniqueness of physical measures in specific classes of smooth dynamical systems. 

Before we discuss in the forthcoming section some technqiues for constructing physical measures, we first remark that the existence of physical measures is not guaranteed: there are dynamical systems that admit no physical measures at all. The simplest example is the identity map \( f(x) = x \), for which \( e_{n}(x) = \delta_{x}\)  for every \( x \) and  \( n \geq 1 \), and therefore trivially \( e_{n}(x) \to \delta_{x}\). It follows that there cannot be any physical measures, since $\mathcal B_{\delta_x} = \{x\}$, and therefore $m(B_{\delta_x}) = 0$ for any Lebesgue reference measure $m$ since a Lebesgue reference measure must be nonatomic. More interesting examples are  maps for which  the empirical measures \( e_{n}(x)\) \emph{do not converge}. This can be very counter-intuitive since if there are two distinct probability measures \( \mu, \nu\) and subsequences \( n_{i}, n_{j}\to \infty\) such that \( e_{n_{i}}(x) \to \mu\) and \( e_{n_{j}}(x)\to \nu\), this means that the statistical distribution of the orbit  \emph{depends on the time scale}. In a rough sense, this means that the statistical behavior of the map is sometimes described by the measure \( \mu \) and sometimes  by the measure \( \nu\).  This  means  that there is no well defined asymptotic statistical distribution, in which case we say that the orbit of \( x \) exhibits \emph{non-statistical}, or  \emph{historic}, behaviour, see~\cite{HofKel90, KirSom17, LabRod17, Tak08,  Tal22}. The  \emph{Palis Conjecture} \cite{Pal00} says that non-statistical dynamical systems are ``exceptional,'' whereas  ``typical'' dynamical systems admit a finite number of physical measures whose unions of basins have full Lebesgue measure. 

\subsection{Hyperbolicity}
The simplest examples of physical measures are Dirac-delta measures on attracting periodic orbits. However, there are huge classes of systems which do not have attracting periodic orbits and for which proving the existence of physical measures is highly challenging. 
Over the last 50 years, starting mostly from the work of Sinai, Bowen, and Ruelle in the 1970s \cite{Bow75, BowRue75, Sin72},  there has been a huge amount of research on developing techniques to prove the existence of physical measures. Most of these techniques assume some kind of  \emph{hyperbolicity}, which essentially consists of a \(Df\)-invariant  \emph{tangent bundle decomposition} 
\begin{equation}\label{eq:hypsplit}
T_{x}M = E^{1}_{x}\oplus \cdots \oplus E^{\kappa}_{x}
\end{equation}
satisfying various properties. These properties include  estimates on the contraction or expansion of vectors in each sub-bundle, relation between the contraction and expansion in different sub-bundles, and  the regularity of the decomposition (i.e., whether the distributions $E^j_x$ depend measurably or continuously on \( x \)). See \cite{BarPes07} for a comprehensive survey. While there is no completely general result  which says that every form of hyperbolicity implies the existence of a physical measure, there are many highly non-trivial results which show that even some very weak hyperbolicity conditions can be sufficient to prove the existence of a physical measure \cite{CliLuzPes23}.

\subsection{Verifying Hyperbolicity}
Notwithstanding the importance of the results mentioned above, using hyperbolicity as an \emph{assumption} naturally leads to the question of whether it can be \emph{verified} in specific systems. 
As it turns out, there are many situations in which hyperbolicity can be verified relatively easily (at least in principle), either through geometric and analytic arguments \cite{PalTak93} and/or explicit rigorous numerical computations \cite{Ara07}. Most of these easily verifiable situations occur in the context of\emph{``uniformly hyperbolic''} systems,  in which the hyperbolic splitting is continuous, robust under small perturbations, and the contraction and expansion are uniform in  the phase space. 

However, there are more general situations with broader applications than uniform hyperbolicity, in which the hyperbolic splitting is only measurable, the splitting can be easily destroyed under small perturbations, and the contraction and expansion estimates cannot be given uniformly in the phase space. Here we loosely refer to these dynamical systems as ``nonuniformly hyperbolic.'' The verification of the existence of physical measures in nonuniformly hyperbolic systems is still extremely hard. 
A major breakthrough was made in 1991 by Benedicks and Carleson in their famous paper \cite{BenCar91},  in which they studied the H\'enon family \( H_{a,b}(x,y) = (1+y-ax^{2}, bx)\) of two-dimensional diffeomorphisms of \( \mathbb R^{2}\). They developed a number of very delicate and sophisticated arguments and estimates, which we will for simplicity refer to as \emph{parameter-exclusion} arguments, to prove that when the  map is sufficiently \emph{strongly dissipative} (i.e. when the parameter \( b \) is sufficiently small), there exists a positive Lebesgue measure set of parameters \( a \) for which the map \( H_{a,b}\) admits some (nonuniformly) hyperbolic structure. It was then shown in \cite{BenYou93} that this hyperbolic structure implies the existence of a physical measure. 

There exist several extensions and generalizations of the arguments and results of Benedicks and Carleson \cite{BenCar91}, but they typically deal with families of maps that share a lot of the key features of the H\'enon maps, such as \emph{smoothness} and \emph{strong dissipativity}.
There are many classes of nonuniformly hyperbolic dynamical systems that do not share these features, such as the famous and very well-studied \emph{Standard Map} \cite{BloLuz09, GioLaz00}, which is area-preserving and therefore not dissipative. A family of dissipative examples of nonuniformly hyperbolic systems come from the \emph{Lorenz equations} \cite{Lor63},  whose two-dimensional Poincar\'e ``Lorenz'' map lacks smoothness: the Poincar\'e map contains a non-trivial discontinuity curve, giving rise to a curve of singularities with unbounded derivatives. For certain parameter values, non-rigorous numerical studies strongly suggest the existence of some kind of hyperbolic structure, but the rigorous verification of this  remains an open problem and the existing techniques do not seem to easily generalize to these systems.


\subsection{Goals of this paper}
 The original  pioneering parameter-exclusion arguments  of Benedicks and Carleson, as well as those developed in its generalizations,  are \emph{extremely intricate}, consisting of many ``sub-arguments'' that are tightly inter-woven. We believe that this is the main reason for which only a handful of researchers have taken the time to properly understand the methodology of the proof, and the reason for which there have been no significant generalizations to this methodology in the literature outside of strongly dissipative smooth settings.
  
This is the first in a series of  papers whose goal is to  gradually \emph{disentangle} the various  strands of the existing parameter-exclusion arguments. Our hope is to not only make the original argument much more accessible, but also highlight that many of the sub-arguments in the construction  are of independent interest and can be generalized to maps which are not necessarily strongly dissipative and may have discontinuities or singularities. Ultimately this should make it possible to develop generalizations of the entire parameter-exclusion arguments to systems such as the Standard map or Lorenz-like maps.

We begin this program with a careful study of a comparatively elementary notion, referred to in  \cite{BenCar91} as the \emph{most contracted} direction of the differential map. This is somewhat understated in the original arguments, but turns out to play a crucial role in constructing some geometric ``hyperbolic'' structures that depend on only \emph{finitely} many iterates of the map. By comparison, the usual notion of hyperbolicity is essentially asymptotic and therefore requires information about all iterates. Various properties of these finite-time most contracted directions, such as their dependence on the iterations and the base point, constitute important conditions for the development of the overall argument. These properties seem to rely significantly on the smoothness and especially the strong dissipativity of the maps. 
 
In this paper we will give a formal definition of what we call  \emph{(finite time) Hyperbolic Coordinates}, which we believe is the natural and more intuitive setting in which to understand and study the  ``most contracted directions'' of \cite{BenCar91}. We will introduce a general and quite  weak \emph{pointwise} and \emph{finite-time}  hyperbolicity condition, which we call \emph{quasi-hyperbolicity}, and show that under this assumption the hyperbolic coordinates satisfy a number of important properties. These properties, in particular, include those required for the parameter-exclusion arguments of \cite{BenCar91} and its generalizations. Crucially, however, our quasi-hyperbolicity condition \emph{does not require the map to be strongly dissipative}: it can apply even to area-preserving maps such as the Standard map. Moreover, \emph{nor does our condition require the map to have a bounded derivative}, thus making it applicable to systems with singularities such as Lorenz-like maps. This broad applicability makes the results completely new as they are not included, even implicitly, in any of the existing literature as far as the authors are aware.

\section{Definitions and Statements of Results}

 In Section \ref{sec:hypcoord} we give the definition of finite-time \emph{hyperbolic coordinates} and make several remarks about the motivation for this notion. Then in Section \ref{sec:quasihyp} we introduce our condition of pointwise and finite time \emph{quasi-hyperbolicity}, and in Section \ref{sec:results} we state our main results.

\subsection{Hyperbolic Coordinates}\label{sec:hypcoord}
In this section we give the formal definition of Hyperbolic Coordinates and discuss some of their properties. 
We assume that  \( M \) is  a Riemann surface and that \( \Phi: M \to M \) is a  map. Our results are   \emph{pointwise} in the sense that they apply to individual orbits, so we do not assume any global regularity of \( \Phi \). Instead, we fix some 
 \( \xi_{0}\in M \) and some  \( k\geq 1 \) and suppose \( \Phi^{k} \)  is \( C^2\) at \( \xi_{0}\) (for the definition we only need \( C^{1}\) but for many properties and for our results we will need \( C^{2}\)).

\subsubsection{Definition of Co-eccentricity and Hyperbolic Coordinates}

 We define the  \emph{co-eccentricity} of 
\( \Phi^{k}_{\xi_{0}}\) or, more precisely, of the derivative map   
\( D\Phi^k_{\xi_0}: T_{\xi_0}M\to T_{\xi_k}M\), as 
\begin{equation}\label{eq:eccentricity1}
C_{\xi_{0}, k}  :=\frac{|\det(D\Phi^{k}_{\xi_{0}})|}{\|D\Phi_{\xi_{0}}^{k}\|^{2}}= 
\frac{\|(D\Phi_{\xi_{0}}^{k})^{-1}\|^{-2}}{|\det(D\Phi^{k}_{\xi_{0}})|}
= \frac{\|(D\Phi_{\xi_{0}}^{k})^{-1}\|^{-1}}{\|D\Phi_{\xi_{0}}^{k}\|}.
\end{equation}
Notice that \( \|(D\Phi_{\xi_{0}}^{k})^{-1}\|^{-1}\) is a somewhat convoluted way of writing the \emph{co-norm} of \( D\Phi_{\xi_{0}}^{k}\), that is, the norm of the image of the most contracted unit vector. The equality between the three expressions in \eqref{eq:eccentricity1} then  follows immediately from the fact that \( \det(D\Phi^k_{\xi_0}) =  \|D\Phi_{\xi_{0}}^{k}\| \|(D\Phi_{\xi_{0}}^{k})^{-1}\|^{-1}  \). The third formulation in \eqref{eq:eccentricity1} clearly shows that we  always have  \(  C_{\xi_{0}, k} \leq 1 \). Letting \( \mathcal S_0\subset T_{\xi_0}M \) be the unit circle and  \(\mathcal S_k := D\Phi_{\xi_0}^k(\mathcal S_0) \subset T_{\xi_k}M \) be its image,  the co-eccentricity  has a very natural geometrical interpretation:  if  \(  C_{\xi_{0}, k} = 1 \), then \( \mathcal S_k\) is also the unit circle, whereas if  \(  C_{\xi_{0}, k} < 1 \), then \( \mathcal S_k\) is a non-trivial ellipse and  there  are distinct unit vectors  \( e^{(k)}, f^{(k)}\in T_{\xi_0}M\) that map to  the minor and major semi-axes of the ellipse \( \mathcal S_k\), and are therefore respectively the \emph{most contracted} and \emph{most expanded} unit vectors for \( D \Phi^k_{\xi_0}\). 

\begin{definition}
\label{def:hypcoords} 
If $C_{\xi_0, k} < 1$, the  coordinates 
\[
\mathcal H^{(k)}= \{ e^{(k)}, f^{(k)}\}
\]
  defined by taking \( e^{(k)}, f^{(k)}\)  as unit basis vectors, are called  \emph{hyperbolic coordinates} of order $k$ at \( x_0\).
\end{definition}

Notice that  $e^{(k)}$ is  the \emph{most contracted}  unit vector and $f^{(k)}$ as the \emph{most expanded} unit vector under $D\Phi_{\xi_0}^k$, but these are just \emph{relative} terms and these vectors may not actually be expanded or contracted at all. 
Notice also that hyperbolic coordinates are not uniquely defined since \( -e^{(k)}\) and \( -f^{(k)}\) are also most contracted and most expanded respectively. We therefore just assume that some choice has been made and, as we shall see, this will not create any ambiguity or confusion in the settings which we will consider. 

\subsubsection{Hyperbolic Coordinates as Diagonalizing Coordinates}
Hyperbolic coordinates are  useful in a number of ways.  First of all, note that they form an \emph{orthonormal} basis of $T_{\xi_0}M$: if $\mathcal S_0 \subset T_{\xi_0}M$ is the unit circle and $\mathcal S_k = D\Phi_{\xi_0}^k(\mathcal S_0) \subset T_{\xi_k}M$ is the ellipse given by the image of $\mathcal S_0$ under $D\Phi_{\xi_0}^k$, then it is a fundamental result in linear algebra that the minor and major axes of $\mathcal S_k$ have orthogonal preimages in $\mathcal S_0$ (see Remark \ref{rmk:SVD} below), and these preimages are precisely $e^{(k)}$ and $f^{(k)}$. For any $i \geq 1$, we let 
\begin{equation}\label{eq:eik}
e^{(k)}_{i}:=  D\Phi^{i}(e^{(k)}) 
\quand 
 f^{(k)}_{i} :=D\Phi^{i}(f^{(k)}).
  \end{equation}
 Notice that  \( e^{(k)}_{i},  f^{(k)}_{i} \in T_{\xi_{i}}M \),  where  \(\xi_{i}=\Phi^{i}(\xi_{0})\), and  
\(
e^{(k)}_{k}
\) and 
\(  f^{(k)}_{k}
  \)
  are by definition minor and major semi-axes of the ellipse \( \mathcal S_k\) and so are also \emph{orthogonal} (which is \emph{not generally the case} when \( i \neq k\)).
 Normalizing these vectors we can define an orthonormal basis in  \( T_{\xi_k}M\) given by the unit vectors 
  \[
 \mathcal H^{(k)}_{k} :=  \{{e^{(k)}_{k}}/{\|e^{(k)}_{k}\|}, {f^{(k)}_{k}}/{\|f^{(k)}_{k}\|} \}. 
\]
In coordinates  \( \mathcal H^{(k)}\) in \( T_{\xi_{0}}M \) and \( \mathcal H^{(k)}_k\) in \( T_{\xi_{k}}M \), the derivative 
\(
 D\Phi^{k}_{\xi_{0}}: T_{\xi_{0}}M \to T_{\xi_{k}}M
 \)
has   \emph{diagonal} form 
\begin{equation}\label{eq:hypcorest}
D\Phi^{k}_{\xi_{0}} = \begin{pmatrix} \|f^{(k)}_{k}\| & 0 \\ 0 &  \|e^{(k)}_{k}\|\end{pmatrix} =
\begin{pmatrix}   \|D\Phi^{k}_{\xi_{0}}\|  & 0 \\ 0 & \|(D\Phi^{k}_{\xi_{0}})^{-1}\|^{-1}\end{pmatrix}. 
\end{equation}
This diagonal form of the derivative can be very useful in a number of situations. 

\begin{remark}\label{rmk:SVD}
In view of \eqref{eq:hypcorest}, hyperbolic coordinates in smooth dynamics correspond to the \emph{singular value decomposition} of the linear operators $D\Phi_{\xi_0}^i : T_{\xi_0}M \to T_{\Phi^i(\xi_0)}M$. In general, a linear map $A : \mathbb R^n \to \mathbb R^m$ is expressible as $A = U \Sigma V^*$, where $U \in \mathrm O(m), V \in \mathrm O(n)$ are orthogonal matrices and $\Sigma$ is an $m \times n$ matrix whose non-diagonal entries are all $0$. The eigenvalues of $A^* A$ are the squares of the diagonal entries of $\Sigma$ (the ``singular values''), where $A^*$ is the adjoint of $A$, and the corresponding eigenvectors are the columns of $V$. In $\mathbb R^2$, these eigenvectors are the directions of maximal and minimal expansion. So, $e^{(k)}$ and $f^{(k)}$ are eigenvectors of the matrix $(D\Phi^k_{\xi_0})^* \circ D\Phi_{\xi_0}^k$, where $(D\Phi_{\xi_0}^{k})^* : T_{\Phi^k(\xi_0)}M \to T_{\xi_0}M$ is the adjoint of $D\Phi_{\xi_0}^k$ with respect to the Riemannian inner product in $T_{\xi_0}M$ and $T_{\Phi^k(\xi_0)} M$, and these eigenvectors have corresponding eigenvalues $\|e^{(k)}_k\|^2$ and $\|f^{(k)}_k\|^2$.
\end{remark}

\subsubsection{Finite-time stable and unstable manifolds}
\label{sec:finitetime}
A second important observation is that we can extend hyperbolic coordinates to a neighbourhood of the base point \( \xi_{0}\) since, if \( C_{\xi_{0}, k}  < 1 \) at \( \xi_{0}\) then, since \( \Phi^{k}\) is assumed to be \( C^{1}\), the same will be true in a neighbourhood of \( \xi_{0}\).  There exist therefore in this neighbourhood two \emph{orthogonal unit vector fields} \( e^{(k)}, f^{(k)}\) given by the most contracted and most expanded direction at each point. Moreover, hyperbolic coordinates  can be computed explicitly in terms of the partial derivatives of \( D\Phi^{k}\): parametrizing the unit circle by \( \mathcal S = \{(\sin \theta, \cos \theta), \theta \in [0, 2\pi)\}\), the angles  which map to the minor and major axes of the ellipse  \( \mathcal S_k\) are solutions to the  equation
\(
{d}\|D\Phi^{k}  (\sin\theta, \cos\theta)\|/{d\theta}=0,
\)
which gives
\begin{equation}\label{c4}
\tan 2\theta  =
\frac{2 (\pfi x1^{k}\pfi y1^{k} +\pfi x2^{k}\pfi y2^{k})}
{(\pfi x1^{k})^2+(\pfi x2^{k})^2 - (\pfi y1^{k})^2 -(\pfi y2^{k})^2}.
\end{equation}
This gives an alternative proof of the fact that \( e^{(k)}, f^{(k)}\) are  orthogonal  and also shows that they depend on the base point with \emph{the same regularity as the partial derivatives of} \( \Phi^{k}\). 
In  particular, if  \( \Phi^{k} \) is \( C^{2} \) in a neighbourhood of \( \xi_{0}\) then the unit vectors  \( e^{(k)}, f^{(k)}\) define two orthogonal \( C^{1}\) vector fields and are therefore locally \emph{integrable} and define two orthogonal \emph{foliations} \cal Ek, \cal Fk. The leaves of these foliations are  the \emph{integral curves of the most contracted and most expanded directions} for \( D \Phi^{k}\) and therefore can naturally be thought of as  \emph{(finite time) stable  and unstable manifolds}  (of order \(k \)). This idea has been developed in \cite{HolLuz05, HolLuz06} to give new proofs of the classical stable manifold theorems in certain two-dimensional settings, including for orbits which exhibit very weak forms of hyperbolicity.

Extending the notation introduced in \eqref{eq:eik} above, we let 
 \(
  \cal Ek_{i}:=\Phi^{i}(\cal Ek)
\) 
and \( 
   \cal Fk_{i}:=\Phi^{i}(\cal Fk) 
 \)
 denote the images of these stable and unstable foliations, which are themselves the foliations given by the integral curves of the vector fields \( e^{(k)}_{i}, f^{(k)}_{i} \). In particular,  the foliations \(  \cal Ek_{k}, \cal Fk_{k}  \) are  orthogonal and therefore  we can use the diagonal form of the derivative given in \eqref{eq:hypcorest} in a neighbourhood of the point \( \xi_{0}\).

\subsection{Quasi-hyperbolicity}\label{sec:quasihyp}

As we have seen in the previous section, hyperbolic coordinates give rise to some  dynamically significant geometric structures, in particular the orthogonal foliations in which the derivative has the especially simple diagonal form \eqref{eq:hypcorest}. However, the usefulness of the coordinates depends on how much information we have about these foliations, such as the direction of the leaves and their curvatures. In principle, a lot of information can be obtained from the formula in  \eqref{c4}, but in practice this can really be used only for the first iterate \( k =1 \), as we do not generally have enough explicit information about the partial derivatives for higher iterates. We therefore need to take a different approach which uses somewhat ``coarser'' information about the derivative along the orbit, but is still sufficient, in some cases, to deduce relevant bounds for the geometry of the hyperbolic coordinates and the corresponding foliations.

 We  formulate a notion of \( \mathfrak C \)\emph{-quasi-hyperbolicity} along the orbit of a point in terms of  a set \( \mathfrak C \) of constants. The conditions involved in this formulation may appear at first sight somewhat  technical, but are  in fact quite natural and quite mild. 
 While our goal is to formulate this notion for \emph{singular} systems with unbounded derivative, our results are also highly relevant in the simpler setting of non-singular systems in which the derivative is uniformly bounded. In the non-singular situation, the formulation is a bit simpler, so for the sake of clarity, we formulate our definitions in the non-singular setting first.  
  
  \subsubsection{Quasi-hyperbolicity in non-singular systems}
\begin{definition}\label{def:nonsingular-QHv2}
Given a set  \( \mathfrak C = 
\{ \Gamma,   \lambda, b, c \}\) of positive constants, 
the point $\xi_0$ is \(\mathfrak C\)-\emph{quasi-hyperbolic} at time $k$ if  there exists constants \( C > 0, B, D \geq 1 \) such that for every \( 1 \leq i \leq k \) the map \(\Phi^{i}\) is \(C^{2}\) at \( \xi_{0}\) and  satisfies 
\begin{equation}\label{eq:constants-basic-nonsing}
 C \lambda^{i} < \|D\Phi^{i}_{\xi_{0}}\| <  D\Gamma^{i}
 \quand
C_{\xi_0,i} := \frac{\|(D\Phi_{\xi_{0}}^{i})^{-1}\|^{-1}}{\|D\Phi_{\xi_{0}}^{i}\|} < B c^{i}  <1,
\end{equation}
and 
\begin{equation}\label{eq:constants-basic2-nonsing} 
    \|D\Phi_{\xi_{i-1}}\|, \|D^2\Phi_{\xi_{i-1}}\| <  D \Gamma
 \quand
     \det D\Phi_{\xi_{i-1}}  \leq b
\end{equation}
with the constants satisfying 
 \begin{equation}\label{eq:nonsing}
\Gamma \geq  \max\{\lambda, 1\},   \quad\quad  b < \lambda^{2}, \quad\quad  c< \lambda^{2}/\Gamma^{2} < 1. 
  \end{equation}
\end{definition}
We make several remarks  about the interpretation and significance of these conditions before stating their generalization to the singular setting. 

\begin{remark}\label{rem:2.2}
The bounds in \eqref{eq:constants-basic-nonsing} are the two core ``\emph{hyperbolicity}'' conditions, albeit, and crucially,  formulated in a way that does not require an \emph{a priori} decomposition of the tangent bundle.  
\end{remark}

\begin{remark}\label{rem:ratio}
 The lower and upper bounds on \( \|D\Phi^{i}_{\xi_{0}}\|\) in \eqref{eq:constants-basic-nonsing} are in some sense ``trivial'' since such bounds always exist, but the purpose here is to give these bounds in terms of specific constants which appear also in the other conditions (notice that they imply in particular a minimum ``growth'' of the norm of the derivative but we only assume \( \lambda > 0 \), not necessarily \(\lambda > 1 \),  so this may not necessarily require actual  growth).  The ratio \( \lambda/\Gamma\), which is always \( \leq 1 \), may in some situations be  chosen very close to 1.  For instance, suppose that \( \xi_{0}\) is a typical point for an invariant probability measure \( \mu \), ``typical'' in the sense that the Lyapunov exponent \( \chi  = \lim_{n\to\infty}n^{-1} \log \|D\Phi^{i}_{\xi_{0}}\|\) is well defined. 
 This implies that for any \( \epsilon > 0 \) and suitable constants \( C\) and \( D \) (depending on \( \epsilon\)), the bounds on \( \|D\Phi^{i}_{\xi_{0}}\|\) in \eqref{eq:constants-basic-nonsing} are satisfied with \( \lambda = e^{\chi -\epsilon}\) and \( \Gamma = e^{\chi + \epsilon}\). Then \( \lambda/\Gamma = e^{-2\epsilon} \), which can be made \emph{arbitrarily close to} 1 by taking \( \epsilon\) small. 
\end{remark}

\begin{remark}
We also emphasize that we do not assume \( \lambda > 1 \), and 
the second set of inequalities in \eqref{eq:constants-basic-nonsing} essentially say there is is also a ``contracting'' direction, albeit just contracting relative to some ``more expanding'' direction (which may not even be expanding). This is thus essentially a weak ``dominated decomposition'' condition. Notice that the bound is formulated in terms of the constant \( c \) which is bounded above by the ratio \( \lambda^{2}/\Gamma^{2}\). This  puts some restrictions on its range of applicability but, as mentioned in Remark \ref{rem:ratio}, there are many cases in which \( \lambda \) and \( \Gamma \) can be chosen so that \( \lambda/\Gamma\) is very close to 1, allowing this condition to be quite easily satisfied. 
\end{remark}

\begin{remark}
The conditions in \eqref{eq:constants-basic-nonsing} and \eqref{eq:constants-basic2-nonsing}  could morally be stated directly in terms of \( \lambda \) and \( \Gamma\), without reference to the constants $b$ and $c$,  but for technical reasons we require some uniform bounds independent of \( k \) which are achieved by introducing the constants \( b, c \), which can be thought of as ``arbitrarily close''  to \( \lambda^{2}\) and \( \lambda^{2}/\Gamma^{2}\) respectively. 
\end{remark}

\begin{remark}
The non-singularity of the map is reflected in the uniform upper bound for the norms of the first and second derivatives in the first expression in \eqref{eq:constants-basic2-nonsing}. We will have to relax this in the general setting. 
 \end{remark}
 
 \begin{remark}
The determinant is \emph{not} required to be small. In many cases we have \( \lambda > 1 \) and therefore the bound \eqref{eq:constants-basic2-nonsing} on \( b \)  is not very restrictive at all, allowing us to apply our results even to area-preserving systems. 
 \end{remark}

\begin{remark}
Strictly speaking the set of constants \( \mathfrak C \) which define quasi-hyperbolicity also includes the constants \(  B, C , D\). These latter constants will come into the definition of some constants which appear in our results, but there are no restrictions on them for the definition of quasi-hyperbolicity. Therefore, for clarity, we have not included them in the ``core'' constants \( \mathfrak C \). 
\end{remark}

\begin{remark}
    The assumption that $B, D \geq 1$ (as opposed to simply $B, D > 0$) is an assumption based on technical convenience. Since $B$ and $D$ are used in upper bounds, we lose no generality in assuming $B, D \geq 1$. This also holds for $B$ and $D$ in Definition \ref{def:nonsingular-QHv2-I} below.
\end{remark}

\subsubsection{Quasi-hyerbolicity in  singular systems}
We now generalize the definition above to singular systems in which the derivative may be unbounded.

\begin{definition}\label{def:nonsingular-QHv2-I}[(Singular) Quasi-Hyperbolicity]
Given a set \( \mathfrak C = \{\Gamma,  \widetilde\Gamma, \lambda, b, c, \tilde c \}\) of positive constants, 
the point $\xi_0$ is \(\mathfrak C\)-\emph{quasi-hyperbolic} at time $k$ if  there exists constants \(B, D\geq 1 \geq \widetilde B, C >0\)
such that for every \( 1 \leq i \leq k \) the map \(\Phi^{i}\) is \(C^{2}\) at \( \xi_{0}\) and  
\begin{equation}\label{eq:constants-basic-sing1}
 C \lambda^{i} < \|D\Phi^{i}_{\xi_{0}}\| <  D\Gamma^{i}
 \quand 
C_{\xi_0,i} := \frac{\|(D\Phi_{\xi_{0}}^{i})^{-1}\|^{-1}}{\|D\Phi^{i}\|} \leq B c^{i}  <1,
\end{equation}
and 
\begin{equation}\label{eq:constants-basic2-sing1}
    \|D\Phi_{\xi_{i-1}}\|, \|D^2\Phi_{\xi_{i-1}}\| <  D \Gamma \widetilde\Gamma^{i-1},
    \quand
     \det D\Phi_{\xi_{i-1}}  \leq b.
\end{equation}
We assume moreover that 
    \begin{equation}\label{eq:constants-basic-x}
 \widetilde \Gamma \geq 1 \quand  \Gamma >  \max\{\lambda, 1\}  \quand b < \Gamma^2 \widetilde\Gamma
    \end{equation}
    and \emph{either}
        \begin{equation}\label{eq:condition-I}\tag{I}
 b < \lambda^{2}/\widetilde \Gamma \quand c < \lambda^{3}/\Gamma^{3}\widetilde\Gamma^{3}<1,
    \end{equation}
    in which case we say that  $\xi_0$ is  \(\mathfrak C\)-\emph{quasi-hyperbolic}  of type (I), 
    \emph{and/or}
   \begin{equation}\label{eq:condition-II}\tag{II}
 C_{\xi_{i-1},1} := \frac{\|(D\Phi_{\xi_{i-1}})^{-1}\|^{-1}}{\|D\Phi_{\xi_{i-1}}\|} \geq \widetilde B \tilde c^{i-1}
  \quand   b < \lambda^{2}\tilde c \quand c < \lambda^{2}\tilde c^{2} /\Gamma^{2}\widetilde\Gamma < \tilde c  \leq 1 
    \end{equation}
        in which case we say that  $\xi_0$ is  \(\mathfrak C\)-\emph{quasi-hyperbolic}  of type (II). 
\end{definition}

We conclude this section with a number of additional remarks concerning our assumptions in the singular setting. These remarks are not formally needed for the statement of our results in Section \ref{sec:results}, but are included to help clarify how the assumptions should be interpreted heuristically.
  
 \begin{remark}
 The two core sets of conditions \eqref{eq:constants-basic-sing1} and \eqref{eq:constants-basic2-sing1} are exactly identical to the conditions \eqref{eq:constants-basic-nonsing} and  \eqref{eq:constants-basic2-nonsing} respectively in the non-singular case \emph{except} for the addition of the new constant \(\widetilde\Gamma\)  in \eqref{eq:constants-basic2-sing1}, which  now  allows the derivative to be unbounded along the orbit, albeit in a controlled way. This is a significant generalization of the definition and hugely increases the range of systems to which it is applicable. 
 The conditions on the constants in \eqref{eq:constants-basic-x} and \eqref{eq:condition-I} are also very similar  to the corresponding conditions \eqref{eq:nonsing} in the non-singular setting albeit incorporating the new constant \( \widetilde \Gamma \). The alternative condition \eqref{eq:condition-II} is not just a formal condition on the constants but introduces a requirement of a specific lower bound on the \emph{pointwise co-eccentricity} $C_{\xi_i,1}$. 
\end{remark}

\begin{remark}
The distinction between type (I) and type (II) singular hyperbolicity is not particularly relevant from a conceptual point of view. It is rather just a technical distinction motivated by the fact that we can address both situations by estimating some expressions in slightly different ways in the course of the proof, and one or the other might be easier to verify in some specific examples.  The results we obtain are the same: they do not distinguish between these two cases except in the specific values of some of the constants. 
 \end{remark}

 \begin{remark}
 The bound on the one-step co-eccentricity in \eqref{eq:condition-II} is  essentially just a mild  \emph{bounded recurrence} condition for the orbit near the singularity, as are also the pointwise bounds in \eqref{eq:constants-basic2-sing1}. 
The constants \( \tilde c\) and \( \widetilde\Gamma\) are therefore related and we can even  choose them satisfying an explicit relationship, such as   \( \tilde c = 1/\widetilde \Gamma^{1/2}\). Using the fact that \( \Gamma > \lambda \),  this would imply 
  \(
c  <   {\lambda^{2} \tilde c^{2}}/{\Gamma^{2} \widetilde \Gamma} 
<1/{\tilde\Gamma^{2}}   = \tilde c^{4}  \leq  \tilde c \leq 1
\) which shows that this choice is compatible with the last set of inequalities in \eqref{eq:condition-II}.
 \end{remark}
 
 \begin{remark}
Taking \( \tilde c = \widetilde\Gamma = 1\) in \eqref{eq:condition-II} we recover \emph{exactly} the conditions \eqref{eq:nonsing} of the non-singular setting of Definition~\ref{def:nonsingular-QHv2} (the condition on the pointwise co-eccentricity and the constant \( \widetilde B \) do not appear explicitly there but are automatically satisfied). We will therefore not give a separate proof of our results in the non-singular case since they are included as special cases of the singular case of type (II).  Also, to simplify the terminology we will usually omit explicit reference to the set  \( \mathfrak C \) since this is  understood to have been fixed. 
 \end{remark} 

\begin{remark}
    The constant $c$ is an \emph{upper} bound of the the \emph{accumulated} co-eccentricity of $D\Phi$ along the orbit $\xi_i$, whereas $\tilde c$ is a \emph{lower} bound of the \emph{one-step} co-eccentricity. The assumption that $\tilde c > c$ is not contradictory with the fact that $c$ is an upper bound, while $\tilde c$ is a lower bound, for two reasons. Firstly, due to rotation effects, if $A_1$ and $A_2$ are two matrices with co-eccentricities $C_{A_1}$ and $C_{A_2}$, then there is no relation between the product of the co-eccentricities $C_{A_1} C_{A_2}$ and the co-eccentricity of the product $C_{A_1 A_2}$. So there need be no relation between the accumulated co-eccentricity $C_{\xi_0,i}$ and the one-step co-eccentricity $C_{\xi_i,1}$. Secondly, if there are no rotation effects, and the accumulated co-eccentricity is the product of the one-step co-eccentricities (as in the classical geometric Lorenz attractor), there is still no contradiction: since $C_{\xi_0,i} \leq Bc^i$ and $C_{\xi_i,1} \geq \widetilde B\tilde c^i$, if $C_{\xi_0,i} = \prod_{j=0}^{i-1} C_{\xi_j,1}$, we would have: 
    \[
    Bc^i \geq C_{\xi_0,i} = \prod_{j=0}^{i-1} C_{\xi_j,1} \geq \prod_{j=0}^{i-1} \widetilde B\tilde c^j = \widetilde B^i \tilde c^{i(i-1)/2}.
    \]
For appropriate choices of $B$ and $\widetilde B$, it is therefore perfectly reasonable to suppose $\tilde c > c$. 
\end{remark}

\subsubsection{Auxiliary Constants}
The statements of our main results below,  as well as the intermediate computations in the argument, will involve a number of lengthy expressions involving the constants used in the definition of quasi-hyperbolicity. To simplify these expressions we will introduce a number of auxiliary constants at various steps of the proof. For ease of reference we  collect the definitions of all these constants here. 
First of all, let
\begin{equation}\label{eq:sing-constants-v0}
Q_0 := \sqrt{\frac{2}{1-B^2 c^2}}
\quand K_{1} := \frac{Q_0^2}{\sqrt 2} 
\end{equation}
Notice that by  \eqref{eq:constants-basic-sing1}, we have \(  Bc < 1 \) and  therefore also  \(  B^{2}c^{2} < 1 \)
and so  \( Q_{0}\) and \( K_{1}\)  are well-defined positive constants.  Moreover, assuming the constant \( B \) is fixed, we have that  \( Q_{0}, K_{1}\to \sqrt 2\) as \(c\to 0 \). 
Then we let 
\begin{equation*}\label{eq:sing-constants-v1}
Q_1 :=  BD + \frac{Q_0 BD^3\Gamma}{C(\lambda - \Gamma \widetilde \Gamma c)}, 
\quad  Q_2 := \frac 1 C + \frac{Q_0 D^2\Gamma\lambda }{C^2(\lambda^2 - \widetilde \Gamma b)},
\quad 
Q_{3}:=  \frac{Q_1D\Gamma^2 \widetilde \Gamma}{\lambda},
\quad 
Q_{4}:=\frac{Q_1 Q_2 D\Gamma^5 \widetilde \Gamma^4 }{\lambda^2(\lambda^3 - \Gamma^3 \widetilde \Gamma^3 c)}.
\end{equation*}

These will be used in the setting of type \eqref{eq:condition-I} quasi-hyperbolicity. 
Then, by \eqref{eq:constants-basic-x} we have $\lambda/\Gamma \widetilde \Gamma<1$ and therefore~\eqref{eq:condition-I}  gives $c < \lambda^3/\Gamma^3\widetilde\Gamma^3 < \lambda/\Gamma \widetilde \Gamma<1$ which implies that \( \lambda > \Gamma \widetilde \Gamma c\) and therefore  the denominators in the definition of \( Q_{1}\)  and \( Q_{4}\) are strictly positive. Similarly,  from  \eqref{eq:condition-I} we have $\lambda^2 > \widetilde \Gamma b$ which implies that the denominator in the definition of \( Q_{2}\)  is strictly positive. 
It follows that under the assumptions  \eqref{eq:condition-I},  $Q_1$-$Q_4$ are all well-defined positive constants. 
Moreover, since \( Q_{0}\to \sqrt 2\) as \( c\to 0 \) it follows that  $Q_1,  Q_2, Q_{4}$ are  monotonic in $c$ and decrease to positive constants as $c \to 0$, whereas \(Q_{3}\) is independent of \( c \).

We now let 
\begin{equation*}\label{eq:sing-constants-v2}
    \widetilde Q_1 := BD + \frac{Q_0B\tilde c}{\widetilde B(\tilde c - c)}, 
    \quad \widetilde Q_2 := \frac 1 C + \frac{Q_0D \lambda^2 \tilde c}{\widetilde B C^2 (\lambda^2 \tilde c - b)}.
    \quad 
    \widetilde Q_{3}:=\widetilde Q_1D\Gamma,
\quad 
\widetilde Q_{4} := \frac{\widetilde Q_1 \widetilde Q_2 D \Gamma^4 \widetilde\Gamma }{\lambda^2 (\lambda^2 \tilde c^2 - \Gamma^2 \widetilde \Gamma c)}.
\end{equation*}
These will be used in the setting of type \eqref{eq:condition-II} quasi-hyperbolicity, in which case it follows 
from    \eqref{eq:condition-II} 
that $\widetilde Q_1$-$\widetilde Q_4$ are  are well-defined and positive. Moreover, \( \widetilde Q_1, \widetilde Q_{2}, \widetilde Q_4$  are  monotonic in $c$ and decrease to positive constants as $c \to 0$, whereas \( \widetilde Q_{3}\) is independent of \( c \).
Finally we let 
\begin{equation}\label{eq:Q}
Q:= \frac{BD^3\Gamma^4 \widetilde\Gamma}{C^2 \lambda^2(\Gamma^2\widetilde\Gamma - b)}
\quand 
 K_{2} := \max\{K_{1}(Q_{3}+Q_{4}+Q), K_{1}(\widetilde Q_{3}+ \widetilde Q_{4}+Q)\}
\end{equation}
By condition \eqref{eq:constants-basic-x}, \( Q \) is a well defined positive constant and is clearly independent of \( c \). 
Therefore \( K_{2}\) is also  positive and decreases to a positive constant as \( c \to 0\).

\subsection{Statement of Results}\label{sec:results}
We now give our two main results on the properties of hyperbolic coordinates for quasi-hyperbolic orbits. 

\subsubsection{Convergence of hyperbolic coordinates}
Our first result concerns the dependence of hyperbolic coordinates on the iterate \( k \). Notice that a-priori \emph{there need not be any relation at all between the hyperbolic coordinates at time \( k \) and at time  \( k+1\)}. Indeed,  recall from Section \ref{sec:hypcoord} that \( e^{(k)},  f^{(k)} \) are the pre-images of semi-axes of the ellipse \( \mathcal S_{k} = D\Phi^{k}(\mathcal S) \) and  \( e^{(k+1)},  f^{(k+1)} \) are the pre-images semi-axes of the ellipse \( \mathcal S_{k+1}=D\Phi^{k+1}(\mathcal S) \). 
Since  \( \mathcal S_{k+1}= D\Phi_{\xi_{k}}(\mathcal S_{k})\), it  is easy to construct examples in which the major and minor axes of \( \mathcal S_{k}\) are mapped by 
\( D\Phi_{\xi_{k}} \) to pretty much any desired position in \( \mathcal S_{k+1}\). An extreme case would be for \( D\Phi_{\xi_{k}} \) to map the major (resp. minor) axis of \( \mathcal S_{k}\)  to the minor (resp. major)  axis  of \( \mathcal S_{k+1}\), implying that the most contracting (resp. most expanding) vector under \( D\Phi^{k} \) is the most expanded (resp; most contracted) vector by \( D\Phi^{k+1} \), in which case we have \( e^{(k+1)}=f^{(k)}\) and \( f^{(k+1)}=e^{(k)}\). 

This shows that in principle  hyperbolic coordinates can change  wildly for different values of \( k \), which can make it very difficult to use them in any effective way. However, there are (at least) two ways to control such ``erratic'' changes. The first is by assuming the existence of some ``hyperbolic conefield'' that guarantees that at every step the derivative maps ``expanding directions'' to ``expanding directions'', thus avoiding the possibility of ``switching'' the most contracted and most expanded vectors as described above. The existence of such conefields, however, is a quite strong assumption, which is not generally satisfied. A more general approach, and the focus of our results, is based on the observation that if the co-eccentricity of \( D\Phi^{k} \) is very small, then the ellipse \( \mathcal S_{k}\)  is very ``thin'' (in the sense that the ratio between the minor and major axes is very small), and  \( D\Phi_{\xi_{k}} \) would have to have even smaller co-eccentricity  to  switch the contracting and expanding directions since it would have to map the minor axis of \( \mathcal S_{k}\) to a vector whose norm is larger than the image of the major axis. Some of the conditions on the definitions of quasi-hyperbolicity are precisely motivated by the use of this approach in order to control the fluctuation of the hyperbolic coordinates. We will prove the following. 

\begin{theorem}\label{thm:sing-hyp-coord-cvgce}
There are constants \( Q_{1}, \widetilde Q_{1}\) 
such that for every \( k \geq 1 \) and every \( 1\leq i \leq k \), if  $\xi_0$ is  quasi-hyperbolic up to time $k$ of type \eqref{eq:condition-I}, then 
    \begin{equation}\label{eq:cvgce-k-quasi-hyp-1-thm}
\|e^{(k)} - e^{(i)}\| \leq Q_1 \left( \frac{\Gamma\widetilde\Gamma c}{\lambda}\right)^i,
\end{equation}
while if $\xi_0$ is quasi-hyperbolic up to time $k$ of type  \eqref{eq:condition-II}, then 
\begin{equation}\label{eq:cvgce-k-quasi-hyp-sac-1-thm}
\|e^{(k)} - e^{(i)}\| \leq \widetilde Q_1\left( \frac{c}{\tilde c}\right)^i. 
\end{equation}
In particular, in the non-singular setting, where we can take \( \tilde c =1\),  we have 
\begin{equation}\label{eq:convnonsing}
 \|e^{(k)} - e^{(i)}\| \leq \widetilde Q_1  c^{i}
\end{equation}
\end{theorem}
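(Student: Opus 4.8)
The plan is to estimate the "one-step" change $\|e^{(k+1)} - e^{(k)}\|$ and then sum. Fix $i$ and compare $e^{(i)}$ with $e^{(i+1)}$. The vector $e^{(i)}$ is the most contracted unit vector for $D\Phi^i_{\xi_0}$, equivalently an eigenvector of $A_i := (D\Phi^i_{\xi_0})^* D\Phi^i_{\xi_0}$ for its smaller eigenvalue $\|e^{(i)}_i\|^2$; similarly $e^{(i+1)}$ corresponds to $A_{i+1} = (D\Phi^{i+1}_{\xi_0})^* D\Phi^{i+1}_{\xi_0} = (D\Phi_{\xi_i})^* A_i' (D\Phi_{\xi_i})$ where I write things carefully with the chain rule $D\Phi^{i+1}_{\xi_0} = D\Phi_{\xi_i} \circ D\Phi^i_{\xi_0}$. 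The key linear-algebra input is a perturbation bound: if $A$ is a symmetric positive $2\times 2$ matrix with eigenvalues $0 < \mu_- < \mu_+$ and $A + E$ is another, then the angle between their bottom eigenvectors is at most (a constant times) $\|E\| / (\mu_+ - \mu_-)$, so roughly $\|E\|/\mu_+$ when $\mu_-/\mu_+$ is small — this is a quantitative Davis–Kahan $\sin\theta$ statement in dimension two and is where the constant $Q_0$ (hence the factor $1/\sqrt{1 - B^2c^2}$) enters, since the spectral gap is controlled from below by $\|D\Phi^i_{\xi_0}\|^2(1 - C_{\xi_0,i}^2) \geq \|D\Phi^i_{\xi_0}\|^2(1 - B^2c^2)$.

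Next I would carry out the concrete estimate. Write $B_i := D\Phi^i_{\xi_0}$ and $L_i := D\Phi_{\xi_i}$, so $B_{i+1} = L_i B_i$. Then $A_{i+1} = B_i^* L_i^* L_i B_i$, and I want to compare the bottom eigenvector of $B_i^* B_i$ with that of $B_i^* (L_i^* L_i) B_i$. It is cleaner to work in the hyperbolic coordinates $\mathcal H^{(i)}, \mathcal H^{(i)}_i$ where $B_i$ is diagonal with entries $\|B_i\|$ and $\|B_i^{-1}\|^{-1}$ (equation \eqref{eq:hypcorest}); in those coordinates the question becomes comparing the bottom eigenvector of $\mathrm{diag}(\|B_i\|^2, \|B_i^{-1}\|^{-2})$ — which is just $e_2$ — with that of $\mathrm{diag}(\|B_i\|,\|B_i^{-1}\|^{-1})\, (L_i^*L_i)\, \mathrm{diag}(\|B_i\|,\|B_i^{-1}\|^{-1})$. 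Since $\|L_i^* L_i\| = \|L_i\|^2 \leq (D\Gamma\widetilde\Gamma^i)^2$ and $\det(L_i^*L_i) = (\det L_i)^2 \leq b^2$, the off-diagonal entry of the perturbed matrix is bounded by a constant times $\|B_i\|\cdot\|B_i^{-1}\|^{-1}\cdot\|L_i\|^2 = C_{\xi_0,i}\|B_i\|^2\|L_i\|^2$, while the spectral gap of the perturbed matrix is comparable to its top entry $\|B_i\|^2\|L_i\|^2 \cdot(\text{something}\geq$ a constant$)$ — here one must be slightly careful that $L_i$ does not rotate the expanded direction of $B_i$ all the way onto its contracted direction, which is exactly what the constraints $c < \lambda^3/\Gamma^3\widetilde\Gamma^3$ in \eqref{eq:condition-I} (resp. $c < \lambda^2\tilde c^2/\Gamma^2\widetilde\Gamma$ and the pointwise co-eccentricity lower bound in \eqref{eq:condition-II}) are designed to prevent. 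Assembling this, the angle between $e^{(i)}$ and $e^{(i+1)}$, hence $\|e^{(i+1)} - e^{(i)}\|$ (up to the usual sign ambiguity, fixed by continuity once $i$ is large), is bounded by a constant times $C_{\xi_0,i}\cdot \|L_i\|^2/\|B_i^{-1}\|^{-2} \cdot(\dots)$, which after plugging in $C_{\xi_0,i} \leq Bc^i$, $\|B_i\|\geq C\lambda^i$, $\|L_i\| \leq D\Gamma\widetilde\Gamma^{i-1}$ collapses to a geometric term of the form $\mathrm{const}\cdot(\Gamma\widetilde\Gamma c/\lambda)^i$ in case \eqref{eq:condition-I}, and $\mathrm{const}\cdot(c/\tilde c)^i$ in case \eqref{eq:condition-II} (using the pointwise lower bound $C_{\xi_i,1}\geq \widetilde B\tilde c^i$ to control the gap more efficiently). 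Summing the geometric series $\sum_{j\geq i}$ gives \eqref{eq:cvgce-k-quasi-hyp-1-thm} and \eqref{eq:cvgce-k-quasi-hyp-sac-1-thm}, and the non-singular case \eqref{eq:convnonsing} is the specialization $\tilde c = \widetilde\Gamma = 1$ noted in the remarks; the explicit values of $Q_1$ and $\widetilde Q_1$ should come out matching the definitions in \eqref{eq:sing-constants-v1}–\eqref{eq:sing-constants-v2}, in particular the denominators $\lambda - \Gamma\widetilde\Gamma c$ and $\tilde c - c$ arising precisely as the $\sum_{j\geq i}(\Gamma\widetilde\Gamma c/\lambda)^j$ and $\sum_{j\geq i}(c/\tilde c)^j$ tails.

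The main obstacle I expect is the "no-switching" bookkeeping: making rigorous and quantitative the claim that the one-step map $L_i$ cannot flip the thin ellipse $\mathcal S_i$ over, and more importantly turning the crude Davis–Kahan bound into one with the sharp geometric rate. The naive gap estimate would give a rate like $(\Gamma^2\widetilde\Gamma^2 c/\lambda^2)^i$ or worse, whereas \eqref{eq:cvgce-k-quasi-hyp-1-thm} has only $(\Gamma\widetilde\Gamma c/\lambda)^i$; gaining that extra power of $\lambda/\Gamma\widetilde\Gamma$ presumably requires comparing $e^{(i+1)}$ not to $e^{(i)}$ directly but to its image under a carefully chosen intermediate frame, or equivalently estimating how the already-contracted direction $e^{(i)}_i$ sits inside $\mathcal S_i$ and exploiting that $D\Phi_{\xi_i}$ contracts the thin direction. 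A secondary technical point is handling the sign ambiguity in the definition of $e^{(k)}$ uniformly in $k$: one fixes the sign for small $k$ and propagates it, using that the bound just proved forces $e^{(i)}$ and $e^{(i+1)}$ to be close (not antipodal) for all $i$ large enough, which requires knowing the leading constant times the rate is $< 2$ — a point worth checking but not a real difficulty given $Bc<1$.
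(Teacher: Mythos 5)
Your overall strategy is the same as the paper's: telescope $\|e^{(k)}-e^{(i)}\|\le\sum_{j=i}^{k-1}\|e^{(j+1)}-e^{(j)}\|$, bound each one-step rotation, and sum a geometric tail whose ratio is $\Gamma\widetilde\Gamma c/\lambda$ (type (I)) or $c/\tilde c$ (type (II)). The difficulty is entirely in the one-step estimate, and there your proposal has a genuine gap which you yourself flag but do not close: the Davis--Kahan bound as you set it up, with the off-diagonal entry of $B_i^*(L_i^*L_i)B_i$ estimated by $\|B_i\|\cdot\|B_i^{-1}\|^{-1}\cdot\|L_i\|^2$ and the gap by the top eigenvalue $\|D\Phi^{i+1}_{\xi_0}\|^2$, only yields the rate $(\Gamma^2\widetilde\Gamma^2c/\lambda^2)^i$. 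That is still summable under \eqref{eq:condition-I}, so it would prove convergence, but it is not the statement of the theorem, and in case \eqref{eq:condition-II} it is not clear it gives anything of the form $(c/\tilde c)^i$ at all. A proof that ends with ``gaining that extra power presumably requires\dots'' is not a proof of \eqref{eq:cvgce-k-quasi-hyp-1-thm}.

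The missing step is precisely the one you gesture at. In the basis $\{f^{(j)},e^{(j)}\}$ the off-diagonal entry of $(D\Phi^{j+1}_{\xi_0})^*D\Phi^{j+1}_{\xi_0}$ is the inner product $\langle f^{(j)}_{j+1},e^{(j)}_{j+1}\rangle$, so it carries only \emph{one} factor of $\|D\Phi_{\xi_j}\|$ against the contracted image $\|e^{(j)}_{j+1}\|\le\|D\Phi_{\xi_j}\|\,\|e^{(j)}_j\|=\|D\Phi_{\xi_j}\|\,C_{\xi_0,j}\|D\Phi^j_{\xi_0}\|$, and the other factor $\|f^{(j)}_{j+1}\|\le\|D\Phi^{j+1}_{\xi_0}\|$ cancels against one power of the gap. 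The paper implements this not through a matrix perturbation lemma but by the elementary identity in Lemma \ref{lem:hyp-coord-cvgce-apriori}: writing $e^{(j)}=\cos\theta\,e^{(j+1)}+\sin\theta\,f^{(j+1)}$, applying $D\Phi^{j+1}_{\xi_0}$, and using orthogonality of $e^{(j+1)}_{j+1},f^{(j+1)}_{j+1}$ gives the exact formula \eqref{eq:sin} for $\sin^2\theta$, whose numerator is $\|e^{(j)}_{j+1}\|^2/\|f^{(j+1)}_{j+1}\|^2$ and whose denominator $1-C^2_{\xi_0,j+1}$ produces your $Q_0$. This also disposes of your two side worries at no cost: the ``no-switching'' issue never arises because the formula is exact for any $|\theta|\le\pi/2$ and the resulting bound forces $\sin\theta$ small, and the sign ambiguity is handled by choosing $e^{(j+1)}$ so that $|\theta|\le\pi/2$, for which $\|e^{(j+1)}-e^{(j)}\|\le\sqrt2\,|\sin\theta|$. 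For type (II) the same a priori bound is then relaxed via $\|D\Phi^{j+1}_{\xi_0}\|\ge\|D\Phi^j_{\xi_0}\|\,\|(D\Phi_{\xi_j})^{-1}\|^{-1}$ to get the ratio $C_{\xi_0,j}/C_{\xi_j,1}\le(B/\widetilde B)(c/\tilde c)^j$, which is where the pointwise co-eccentricity lower bound enters. To complete your proof you must replace the crude $\|E\|/\mathrm{gap}$ estimate by this sharper computation of the off-diagonal entry; without it the claimed rates are not obtained.
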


\begin{remark}
Condition \eqref{eq:condition-I} says that \( c < (\lambda/\Gamma\widetilde\Gamma)^{3} < 1 \) which implies  \( c < \lambda/\Gamma\widetilde\Gamma  \) and therefore \( \Gamma\widetilde \Gamma c/\lambda <1\), and Condition \eqref{eq:condition-II} says that \( c< \tilde c\), and therefore all 3 in \eqref{eq:cvgce-k-quasi-hyp-1-thm}, \eqref{eq:cvgce-k-quasi-hyp-sac-1-thm}, \eqref{eq:convnonsing}, are  decreasing exponentially in \( i \). 
\end{remark}

\begin{remark}
The expression  \eqref{eq:convnonsing} captures, in its simplest form, the ``spirit'' of this result and, to some extent, the main motivation for the definition of quasi-hyperbolicity. Since  \( c\in (0,1)\), this implies that the sequence of hyperbolic coordinates form a Cauchy sequence and therefore \emph{converge} as \( k \to \infty \) as long as \( \xi_{0}\) is quasi-hyperbolic for all \(k \geq 1 \). Conditions  \eqref{eq:cvgce-k-quasi-hyp-1-thm}-\eqref{eq:cvgce-k-quasi-hyp-sac-1-thm} imply  the same in the singular case.
\end{remark}

\begin{remark}
A bound similar to   \eqref{eq:convnonsing} was proved in \cite{BenCar91, MorVia93, Via97ihes} in terms of the bound \( b \) for the \emph{determinant}. In these papers  the determinant is always assumed to be small and therefore this bound would not apply to certain systems, for example to area-preserving maps. We have here that the determinant is not in fact the natural quantity to bound this convergence but rather the co-eccentricity, which can be \( < 1 \), and possibly very small, even for area-preserving maps.  
\end{remark}

\begin{remark}
The bounds in \eqref{eq:cvgce-k-quasi-hyp-1-thm} and \eqref{eq:cvgce-k-quasi-hyp-sac-1-thm} are formulated in terms of \( i \). This means that no matter how large \( k \geq i \) is, as long as \( \xi_{0}\) is quasi-hyperbolic up to time \( k \) for the same given set of constants \( \mathfrak C \), the hyperbolic coordinates of order \( k \) must remain within a fixed ``cone'' around the hyperbolic coordinates of order \( i \).  In particular, if we can compute or estimate the direction \( e^{(1)}\) using the explicit formula \eqref{c4} then \eqref{eq:cvgce-k-quasi-hyp-1-thm} and \eqref{eq:cvgce-k-quasi-hyp-sac-1-thm} give bounds on the possible positions of all ``future'' contracting directions \( e^{(k)}\). 
\end{remark}

\begin{remark}
The results above are stated for the most contracting directions \( e^{(i)}, e^{(k)}\) but since hyperbolic coordinates are always orthogonal, exactly the same statements clearly hold for  \( f^{(i)}, f^{(k)}\). 
\end{remark}

\subsubsection{Derivative of hyperbolic coordinates}
To introduce our second main result, recall the expression in  \eqref{c4}, which shows that the hyperbolic coordinates depend \( C^{1}\) on the base point \( \xi_{0}\). We can therefore consider the derivatives \( D e^{(k)} \) and \( D f^{(k)} \) of the hyperbolic coordinates with respect to the base point  (notice that \( De^{(k)} =  D f^{(k)} \) since  \( e^{(k)} \) and \( f^{(k)}\) are always orthogonal). This derivative, and in particular the norm of this derivative, is of interest as it has several implications, for example for the geometry of the local foliations given by the integral curves of the unit vector fields defined by  \( e^{(k)} \) and \( f^{(k)}\) (recall the discussion in Section \ref{sec:finitetime}).  We show that this norm is uniformly bounded in \( k \) by a constant that essentially depends on on the constant \( c \) which bounds the co-eccentricity.

\begin{theorem}\label{thm:main-phase} 
There are constants $K_1$ and $K_2$ 
 such that for every  $k \geq 1$, if  $\xi_0$  is a  quasi-hyperbolic point up to time $k$, then for \( \varsigma = x,y \), we have 
\begin{equation}\label{eq:main-phase}
\|D_{\xi_0} e^{(k)}\| \leq K_1 \|D^2 \Phi_{\xi_0}(e^{(1)},\cdot)\| + K_2 c
\leq K_{1} \sqrt 2  \| \partial_\varsigma D\Phi_{\xi_0}  e^{(1)}\| + K_2c.
\end{equation}
Furthermore, $K_1$ and $K_2$ monotonically decrease to nonzero constants as $c \to 0$. 
\end{theorem}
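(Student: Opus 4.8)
The plan is to differentiate the explicit formula \eqref{c4} for the hyperbolic coordinates with respect to the base point, but to do so in a way that isolates the dependence on the \emph{first} iterate from the dependence on all later iterates along the orbit. Writing $\theta^{(k)}$ for the angle determining $e^{(k)}$, formula \eqref{c4} expresses $\tan 2\theta^{(k)}$ as a rational function of the entries of $D\Phi^k_{\xi_0}$; differentiating with respect to $\varsigma = x, y$ and using the chain rule $\partial_\varsigma D\Phi^k_{\xi_0} = \sum_{j=0}^{k-1} D\Phi^{k-1-j}_{\xi_{j+1}} \cdot (\partial_\varsigma D\Phi_{\xi_j}) \cdot D\Phi^j_{\xi_0}$ (suitably interpreted, with the $\partial_\varsigma$ acting through the orbit) gives $D_{\xi_0}\theta^{(k)}$ as a sum over $j$ of terms, each carrying a factor $D\Phi^j_{\xi_0}$ on the right and $D\Phi^{k-1-j}$ on the left. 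The point is that the $j=0$ term produces exactly the quantity $\|D^2\Phi_{\xi_0}(e^{(1)},\cdot)\|$ appearing in \eqref{eq:main-phase} — this is the ``most contracted direction'' $e^{(k)}$, pushed forward, being acted on by the second derivative at $\xi_0$ — while every term with $j \geq 1$ must be shown to be dominated by a geometric series summing to $K_2 c$.

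Concretely, I would proceed as follows. First, I would set up the derivative of $\theta^{(k)}$ from \eqref{c4}, recognizing that the denominator in \eqref{c4}, after normalization, is essentially $\|D\Phi^k_{\xi_0}\|^2$ up to a bounded factor (controlled via the co-eccentricity bound $C_{\xi_0,k} \le Bc^k < 1$ through the constant $Q_0 := \sqrt{2/(1-B^2c^2)}$ and $K_1 := Q_0^2/\sqrt 2$). So a bound on $\|D_{\xi_0}\theta^{(k)}\|$ reduces to bounding $\|\partial_\varsigma(D\Phi^k_{\xi_0})^* D\Phi^k_{\xi_0}\|$ divided by $\|D\Phi^k_{\xi_0}\|^2$. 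Second, I would split this quantity into the $j=0$ contribution and the $j \geq 1$ contribution. For $j=0$: the relevant vector on the right is the most contracted direction, so $D\Phi^j_{\xi_0} = \mathrm{Id}$ acts on $e^{(k)}$, and after dividing by $\|D\Phi^k_{\xi_0}\|$ one of the two $D\Phi^k$ factors we recognize $D\Phi^k_{\xi_0} e^{(k)} = e^{(k)}_k$, whose norm is $\|(D\Phi^k_{\xi_0})^{-1}\|^{-1}$, i.e. the co-norm; but the other factor involves $e^{(1)}$ via $D^2\Phi_{\xi_0}$, and careful bookkeeping (using that $e^{(k)}$ is close to $e^{(1)}$ — here Theorem \ref{thm:sing-hyp-coord-cvgce} with $i=1$ is needed! — so that $D^2\Phi_{\xi_0}(e^{(k)},\cdot)$ differs from $D^2\Phi_{\xi_0}(e^{(1)},\cdot)$ by an error of order $c$) yields the term $K_1\|D^2\Phi_{\xi_0}(e^{(1)},\cdot)\|$ plus an $O(c)$ error absorbed into $K_2 c$. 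Third, for each $j \geq 1$: the factor $D\Phi^j_{\xi_0}$ on the right, when $e^{(k)}$ is fed in, contracts by roughly the co-norm of $D\Phi^j$, which by the two hyperbolicity bounds in \eqref{eq:constants-basic-sing1} is at most $C_{\xi_0,j}\|D\Phi^j_{\xi_0}\| \le Bc^j \|D\Phi^j_{\xi_0}\|$; meanwhile the middle factor $\partial_\varsigma D\Phi_{\xi_j}$ is bounded by $D\Gamma\widetilde\Gamma^j$ from \eqref{eq:constants-basic2-sing1}, and the left factor $D\Phi^{k-1-j}_{\xi_{j+1}}$ times the leftover $D\Phi^k_{\xi_0}$, divided by $\|D\Phi^k_{\xi_0}\|^2$, contributes a bounded amount (using $\|D\Phi^k_{\xi_0}\| \le \|D\Phi^{k-1-j}_{\xi_{j+1}}\|\cdot\|D\Phi^{j+1}_{\xi_0}\|$ and the lower bound $\|D\Phi^m_{\xi_0}\| > C\lambda^m$). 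Collecting powers, each $j \geq 1$ term is bounded by a constant times $(\Gamma\widetilde\Gamma c/\lambda)^j$ in type (I) (respectively $(c/\tilde c)^j$ in type (II)), which sums geometrically; this is where the auxiliary constants $Q_3, Q_4, Q$ (resp. $\widetilde Q_3, \widetilde Q_4, Q$) and ultimately $K_2 = \max\{K_1(Q_3+Q_4+Q), K_1(\widetilde Q_3+\widetilde Q_4+Q)\}$ come from. Fourth, the final inequality in \eqref{eq:main-phase} is just the elementary observation $\|D^2\Phi_{\xi_0}(e^{(1)},\cdot)\| \le \sqrt 2 \max_\varsigma \|\partial_\varsigma D\Phi_{\xi_0} e^{(1)}\|$, expressing the bilinear form's operator norm in terms of partial derivatives.

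The monotonicity and limiting behavior of $K_1, K_2$ as $c \to 0$ follows from the already-recorded fact that $Q_0 \to \sqrt 2$, hence $K_1 \to 1$, and that $Q_1$–$Q_4$, $\widetilde Q_1$–$\widetilde Q_4$, $Q$ all decrease to positive limits as $c \to 0$ (noted right after their definitions), so $K_2$ does as well; this part is essentially bookkeeping once the main estimate is in place.

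The main obstacle I anticipate is the $j=0$ term — specifically, extracting \emph{exactly} the quantity $\|D^2\Phi_{\xi_0}(e^{(1)},\cdot)\|$ (with the argument $e^{(1)}$, not $e^{(k)}$) with a \emph{clean} constant $K_1$ and only an $O(c)$ correction. This requires simultaneously (a) handling the normalization of \eqref{c4} so that the prefactor is precisely $K_1$ and not some messier expression, (b) invoking Theorem \ref{thm:sing-hyp-coord-cvgce} to replace $e^{(k)}$ by $e^{(1)}$ inside the second-derivative term at controlled cost, and (c) making sure the ``other'' $D\Phi^k$ factor, after cancellation against the denominator, genuinely contributes a bounded (not growing) amount — which again leans on the co-eccentricity bound to compare the co-norm with the norm. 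The $j \geq 1$ terms, by contrast, are a routine (if lengthy) geometric-series estimate once the chain-rule expansion is written down, and the convergence of that series is exactly guaranteed by conditions \eqref{eq:condition-I}/\eqref{eq:condition-II}, which were designed for this purpose.
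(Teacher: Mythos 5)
Your plan is correct and follows essentially the same route as the paper: the paper organizes the computation by differentiating the eigenvalue equation $\mathcal L^{(k)}f^{(k)}=\|f^{(k)}_k\|^2f^{(k)}$ for $\mathcal L^{(k)}=(D\Phi^k_{\xi_0})^*D\Phi^k_{\xi_0}$ and projecting onto $e^{(k)}$ (rather than differentiating \eqref{c4} directly), but this produces exactly the quantity you identify, namely $\langle e^{(k)},(\partial_\varsigma\mathcal L^{(k)})f^{(k)}\rangle$ over a denominator $\|f^{(k)}_k\|^2-\|e^{(k)}_k\|^2$ controlled by the co-eccentricity, followed by the same Leibniz expansion, the same $i=0$ versus $i\geq 1$ split with Theorem \ref{thm:sing-hyp-coord-cvgce} used to swap $e^{(k)}$ for $e^{(1)}$, and the same geometric-series estimates yielding $K_1$ and $K_2$. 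The only point your sketch glosses over is that the Leibniz expansion produces \emph{two} families of terms (the paper's $\mathfrak E_i^{(k)}$ and $\mathfrak F_i^{(k)}$, the latter weighted by $\|e^{(k)}_k\|^2/\|f^{(k)}_k\|^2$ and bounded by $Qc$), but your inclusion of $Q$ in the formula for $K_2$ shows you are aware of this.
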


\begin{remark}
We emphasize that the constants $K_{1}$ and \( K_{2}\), defined explicitly in \eqref{eq:sing-constants-v0} and \eqref{eq:Q} above, are independent of \( k \) and just depend on the constants in $\mathfrak C$ and on $B, \tilde B, C, D$ in the definition of quasi-hyperbolicity. In particular, the variation of the hyperbolic coordinates of arbitrarily high order is uniformly bounded. 
\end{remark}

\begin{remark}
The first term $\|D^2\Phi_{\xi_0}(e^{(1)},\cdot)\|$ in  \eqref{eq:main-phase}  involves the second derivative of the map \( \Phi \). It is the operator norm for the linear map $v \mapsto D^2 \Phi_{\xi_0}\left(e^{(1)}, v\right)$. In other words, this describes the variation of the action of $D\Phi_{\xi}$ on the vector field $e^{(1)}(\xi)$. It  depends only on the first iterate of \( \Phi\) and  is \emph{coordinate-free} as its formulation does not presuppose any a-priori choice of coordinate systems. 
In practice, however, estimating $\|D^2\Phi_{\xi_0}(e^{(1)},\cdot)\|$ may require working in some specific choice of coordinates in which case the 
second bound in \eqref{eq:main-phase} is more useful. Indeed,  we can then use information about the first order partial derivatives of \( \Phi\) to estimate the position of \( e^{(1)}\) using \eqref{c4} and then information about the second order partial derivatives to estimate \( \| \partial_\varsigma D\Phi_{\xi_0}  e^{(1)}\| \).  For example,  if we choose a coordinate system where $D\Phi$ is ``mostly contracting'' in the vertical direction, so that $f^{(1)} \approx (1,0)$ and $e^{(1)} \approx (0,1)$, then $D^2\Phi_{\xi_0}(e^{(1)}, \cdot)$ is a linear map approximated by the matrix 
\begin{equation}\label{eq:second-deriv-approx}
D^2\Phi_{\xi_0}(e^{(1)}, \cdot) \approx \left( \begin{array}{cc}
\partial_{xy} \Phi_1(\xi_0) & \partial_{yy} \Phi_1(\xi_0) \\
\partial_{xy} \Phi_2(\xi_0) & \partial_{yy} \Phi_2(\xi_0)
\end{array}\right) = \partial_y (D\Phi_{\xi_0})
\end{equation}
If the map $\Phi$ is a $C^2$ perturbation of a one dimensional map, as in the strongly dissipative H\'enon maps of \cite{BenCar91},  then $\partial_{\varsigma y}\Phi_j(\xi_0)$ is small for $\varsigma = x,y$ and $j=1,2$ and  $\|D^2\Phi_{\xi_0}(e^{(1)},\cdot)\|$ is bounded by a small constant. 

\end{remark}

\begin{remark}
The second term \( K_{2}c\) in  \eqref{eq:main-phase} is arguably the most important part of the statement as \emph{it highlights the significance of the co-eccentricity constant} \( c \).  Previous estimates of the variation of hyperbolic coordinates have always been formulated in terms of the bound \( b \) for the \emph{determinant}, and moreover have assumed that this bound was ``\emph{sufficiently small}''. A main innovation in our results is to observe that the co-eccentricity is the key quantity in these estimates, not the determinant. In particular this allows us to apply the results to systems in which the determinant is not necessarily small, even area-preserving systems. 
\end{remark}

\subsection{Overview of the Proof}

In Section \ref{sec:hypcoords-cvgce} we prove Theorem \ref{thm:sing-hyp-coord-cvgce}, see Propositions \ref{prop:sing-hyp-coord-cvgce} and \ref{prop:sing-hyp-coord-cvgce-sacrifice}. In Section \ref{sec:ex-platypus} we discuss how to bound the term \(  \|D^2 \Phi_{\xi_0}(e^{(1)},\cdot)\| \) in specific coordinate systems, thus proving the second bound in Theorem~\ref{thm:main-phase}. In Section~\ref{sec-general} we give some a-priori bounds for the variation of hyperbolic coordinates, and in Section \ref{sec-slow}, we  take advantage of the quasi-hyperbolicity conditions to turn those abstract a-priori bounds into concrete bounds and so complete the proof of the first bound of Theorem~\ref{thm:main-phase}.

\section{Convergence of Hyperbolic Coordinates}\label{sec:hypcoords-cvgce}

In this section we prove Theorem \ref{thm:sing-hyp-coord-cvgce}. 
In Section \ref{subsec:a priori cvgce}) we prove a-priori bounds on $\|e^{(k)} - e^{(i)}\|$ and $\|e^{(k)}_i\|$ that do not assume any hyperbolicity at all apart form the existence of hyperboloic coordinates. We then use these estimates to find more explicit bounds assuming  conditions \eqref{eq:condition-I} and condition \eqref{eq:condition-II} in Definition \ref{def:nonsingular-QHv2-I}.

\subsection{\emph{A priori} bounds}\label{subsec:a priori cvgce}

We recall the definition of co-eccentiricity in \eqref{eq:eccentricity1} and let 
\begin{equation}\label{eq:Ctilde-def}
\widetilde C_{\xi_0, k} = \max_{1 \leq i \leq k} \sqrt{ \frac{2}{1-C_{\xi_0, i}^2}}.
\end{equation}
We note that the co-eccentricity of a sequence of linear maps, unlike the determinant, is not multiplicative. So there need be no relationship between product or sum of the pointwise single-step co-eccentricities $C_{\xi_j, 1}$ and the accumulated eccentricity $C_{\xi_0, k}$. In particular, $C_{\xi_0, k}$ need not be monotone in $k$. For the next two lemmas we just suppose that $\xi_0$ is a point at which hyperbolic coordinates of order $i$ are defined for all $1 \leq i \leq k$.

\begin{lemma}\label{lem:hyp-coord-cvgce-apriori}
For every $1 \leq i \leq k$
\begin{align}
\label{eq:hyp-coord-cvgce-apriori-1}
\|e^{(k)} - e^{(i)}\| 
& \leq \widetilde C_{\xi_0,k} \sum_{j=i}^{k-1} \frac{C_{\xi_0,j} \|D\Phi_{\xi_0}^j\|\|D\Phi_{\xi_j}\|}{\|D\Phi_{\xi_0}^{j+1}\|};
\\ 
\label{eq:hyp-coord-cvgce-apriori-2}
\|e^{(k)}_i\| 
& \leq \|(D\Phi_{\xi_0}^i)^{-1}\|^{-1} + \widetilde C_{\xi_0,k}\|D\Phi_{\xi_0}^i\|  \sum_{j=i}^{k-1} \frac{C_{\xi_0,j} \|D\Phi_{\xi_0}^j\|\|D\Phi_{\xi_j}\|}{\|D\Phi_{\xi_0}^{j+1}\|} ;
\\
\label{eq:hyp-coord-cvgce-apriori-3}
\frac{\|e^{(k)}_i\|}{|\det D\Phi_{\xi_0}^i|} 
&
\leq \frac{1}{\|D\Phi_{\xi_0}^i\|} + \widetilde C_{\xi_0,k}\|D\Phi_{\xi_0}^i\| \sum_{j=i}^{k-1} \frac{|\det D\Phi_{\xi_i}^{j-i}|  \|D\Phi_{\xi_j}\|}{\|D\Phi_{\xi_0}^j\| \|D\Phi_{\xi_0}^{j+1}\|}.
\end{align}

%
\end{lemma}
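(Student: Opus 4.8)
The plan is to estimate the difference $\|e^{(k)} - e^{(i)}\|$ by a telescoping argument: write $e^{(k)} - e^{(i)} = \sum_{j=i}^{k-1}\left(e^{(j+1)} - e^{(j)}\right)$, so it suffices to bound each one-step difference $\|e^{(j+1)} - e^{(j)}\|$ and sum. For the one-step bound, fix $j$ and work in the orthonormal basis $\mathcal H^{(j)}_j = \{e^{(j)}_j/\|e^{(j)}_j\|,\, f^{(j)}_j/\|f^{(j)}_j\|\}$ of $T_{\xi_j}M$, in which $D\Phi^j_{\xi_0}$ has the diagonal form \eqref{eq:hypcorest}. The vector $e^{(j+1)}$ is, by definition, the most contracted unit vector of $D\Phi^{j+1}_{\xi_0} = D\Phi_{\xi_j}\circ D\Phi^j_{\xi_0}$; equivalently it is the $D\Phi^j_{\xi_0}$-preimage (renormalized) of the most contracted direction of the ellipse $D\Phi_{\xi_j}(\mathcal S_j)$ inside $\mathcal S_j$. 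The key geometric point, already flagged in the discussion preceding Theorem~\ref{thm:sing-hyp-coord-cvgce}, is that since $\mathcal S_j$ is very thin (ratio of axes equal to $C_{\xi_0,j}$), and since $D\Phi_{\xi_j}$ has bounded distortion, the most contracted direction of $D\Phi_{\xi_j}(\mathcal S_j)$ lies in a narrow cone around the minor axis $e^{(j)}_j$; pulling back, $e^{(j+1)}$ lies in a narrow cone around $e^{(j)}$.

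More concretely, I would parametrize a unit vector in $T_{\xi_0}M$ making angle $\theta$ with $e^{(j)}$, push it forward by $D\Phi^j_{\xi_0}$ (getting a vector of squared norm $\|(D\Phi^j_{\xi_0})^{-1}\|^{-2}\cos^2\theta + \|D\Phi^j_{\xi_0}\|^2\sin^2\theta$ in the diagonalizing basis), then apply $D\Phi_{\xi_j}$ and minimize the resulting norm over $\theta$. Expanding to first order in $\theta$ (legitimate because the minimizer is $O(C_{\xi_0,j})$-small), the cross term that can tilt the minimizer away from $\theta=0$ is controlled by $\|D\Phi^j_{\xi_0}\|\cdot\|(D\Phi^j_{\xi_0})^{-1}\|^{-1}$ in the numerator against $\|D\Phi^{j+1}_{\xi_0}\|^2$ (roughly $\|(D\Phi^{j+1}_{\xi_0})^{-1}\|^{-1}\|D\Phi^{j+1}_{\xi_0}\|$) in the denominator, while the operator norm $\|D\Phi_{\xi_j}\|$ enters as the Lipschitz constant for transporting directions through the last step. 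Carrying this through, and using $\|D\Phi^{j+1}_{\xi_0}\| \ge \|D\Phi^j_{\xi_0}\|\,\|(D\Phi_{\xi_j})^{-1}\|^{-1}$ type inequalities together with $C_{\xi_0,j} = \|(D\Phi^j_{\xi_0})^{-1}\|^{-1}/\|D\Phi^j_{\xi_0}\|$, yields a one-step bound of the shape
\[
\|e^{(j+1)} - e^{(j)}\| \;\le\; \sqrt{\tfrac{2}{1-C_{\xi_0,j}^2}}\;\cdot\;\frac{C_{\xi_0,j}\,\|D\Phi^j_{\xi_0}\|\,\|D\Phi_{\xi_j}\|}{\|D\Phi^{j+1}_{\xi_0}\|},
\]
and replacing the $j$-dependent prefactor by its maximum $\widetilde C_{\xi_0,k}$ and summing over $i \le j \le k-1$ gives \eqref{eq:hyp-coord-cvgce-apriori-1}.

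For \eqref{eq:hyp-coord-cvgce-apriori-2}: write $e^{(k)}_i = D\Phi^i_{\xi_0}(e^{(k)}) = D\Phi^i_{\xi_0}(e^{(i)}) + D\Phi^i_{\xi_0}(e^{(k)} - e^{(i)})$. The first term has norm exactly $\|e^{(i)}_i\| = \|(D\Phi^i_{\xi_0})^{-1}\|^{-1}$ since $e^{(i)}$ is the most contracted unit vector of $D\Phi^i_{\xi_0}$; the second is bounded by $\|D\Phi^i_{\xi_0}\|\cdot\|e^{(k)}-e^{(i)}\|$, and we insert \eqref{eq:hyp-coord-cvgce-apriori-1}. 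Estimate \eqref{eq:hyp-coord-cvgce-apriori-3} then follows by dividing \eqref{eq:hyp-coord-cvgce-apriori-2} through by $|\det D\Phi^i_{\xi_0}|$ and using multiplicativity of the determinant, $|\det D\Phi^j_{\xi_0}| = |\det D\Phi^i_{\xi_0}|\cdot|\det D\Phi^{j-i}_{\xi_i}|$, to rewrite $\|D\Phi^i_{\xi_0}\|/|\det D\Phi^i_{\xi_0}|$ and the factor $\|D\Phi^j_{\xi_0}\|$ appearing in the denominator of \eqref{eq:hyp-coord-cvgce-apriori-1}; a little bookkeeping with the $\|D\Phi^j_{\xi_0}\|\|D\Phi^{j+1}_{\xi_0}\|$ terms produces the stated form. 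I expect the main obstacle to be the first step — making the ``narrow cone'' heuristic into a clean quantitative one-step estimate with exactly the constant $\sqrt{2/(1-C_{\xi_0,j}^2)}$ — since this requires carefully tracking how the minimizing angle depends on the geometry of both the thin ellipse $\mathcal S_j$ and the last-step map $D\Phi_{\xi_j}$, including the worst-case rotation that $D\Phi_{\xi_j}$ can induce; everything after that is algebra.
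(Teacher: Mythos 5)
Your overall architecture is exactly the paper's: telescope $e^{(k)}-e^{(i)}=\sum_{j=i}^{k-1}(e^{(j+1)}-e^{(j)})$, bound each one-step difference, then derive \eqref{eq:hyp-coord-cvgce-apriori-2} by writing $e^{(k)}_i = e^{(i)}_i + D\Phi^i_{\xi_0}(e^{(k)}-e^{(i)})$ and \eqref{eq:hyp-coord-cvgce-apriori-3} by dividing through by the determinant and using multiplicativity. Your treatments of \eqref{eq:hyp-coord-cvgce-apriori-2} and \eqref{eq:hyp-coord-cvgce-apriori-3} are correct and match the paper's.

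The gap is where you say it is: the one-step estimate. As proposed — parametrize by the angle $\theta$ at time $j$, push forward through $D\Phi_{\xi_j}\circ D\Phi^j_{\xi_0}$, and locate the minimizer by a first-order expansion — this is not a proof. The minimizing angle of the composite map depends on the full geometry of how $D\Phi_{\xi_j}$ rotates and distorts the thin ellipse $\mathcal S_j$, and a first-order expansion only locates a critical point up to errors you have not controlled; extracting the precise prefactor $\sqrt{2/(1-C^2)}$ this way requires second-order information you do not have. The paper sidesteps the minimization entirely by working at time $j+1$ rather than time $j$. Write $e^{(j)}=\cos\theta\, e^{(j+1)}+\sin\theta\, f^{(j+1)}$ and apply $D\Phi^{j+1}_{\xi_0}$: since $e^{(j+1)}_{j+1}$ and $f^{(j+1)}_{j+1}$ are the semi-axes of the ellipse $\mathcal S_{j+1}$ and hence \emph{orthogonal}, you get the exact Pythagorean identity
\begin{equation*}
\|e^{(j)}_{j+1}\|^2=\cos^2\theta\,\|e^{(j+1)}_{j+1}\|^2+\sin^2\theta\,\|f^{(j+1)}_{j+1}\|^2,
\end{equation*}
which you solve algebraically for $\sin^2\theta$; dropping the negative term in the numerator gives
\begin{equation*}
\sin^2\theta\;\le\;\frac{1}{1-C_{\xi_0,j+1}^2}\cdot\frac{\|e^{(j)}_{j+1}\|^2}{\|f^{(j+1)}_{j+1}\|^2}.
\end{equation*}
Now $\|f^{(j+1)}_{j+1}\|=\|D\Phi^{j+1}_{\xi_0}\|$ and $\|e^{(j)}_{j+1}\|\le\|D\Phi_{\xi_j}\|\,\|e^{(j)}_j\|=\|D\Phi_{\xi_j}\|\,C_{\xi_0,j}\|D\Phi^j_{\xi_0}\|$, and combining with $\|e^{(j+1)}-e^{(j)}\|\le\sqrt2\,|\sin\theta|$ yields the one-step bound with no perturbation theory at all. (Note the prefactor that appears naturally is $\sqrt{2/(1-C_{\xi_0,j+1}^2)}$, with index $j+1$; this is why the lemma is stated with the maximum $\widetilde C_{\xi_0,k}$ over all orders up to $k$.) In short: the ``narrow cone'' heuristic you flag as the main obstacle is replaced by an exact identity once you exploit orthogonality of the hyperbolic coordinates \emph{in the image} at time $j+1$ rather than in the domain at time $j$.
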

A less sharp but more elegant set of bounds  can be obtained  by recalling that 
\begin{equation}\label{eq:tail-def}
C_{\xi_j,1} := \frac{\|(D\Phi_{\xi_j})^{-1}\|^{-1}}{\|D\Phi_{\xi_j}\|}
\quad\text{and lettiing} \quad 
\mathcal{T}_{\xi_0, i}^{(k)} := \sum_{j=i}^{k-1} \frac{C_{\xi_0, j}}{C_{\xi_j, 1}}.
\end{equation}
We can then show the following. 
\begin{lemma}\label{lem:sacrifice}
For every $1 \leq i \leq k$
\begin{align}
\label{eq:hyp-coord-cvgce-sacrifice-1}
\|e^{(k)} - e^{(i)}\| 
& \leq \mathcal{T}_{\xi_0,i}^{(k)}\widetilde C_{\xi_0,k} ;
\\
\label{eq:hyp-coord-cvgce-sacrifice-2}
\|e^{(k)}_i\| 
&
\leq \|(D\Phi_{\xi_0}^i)^{-1}\|^{-1} + \|D\Phi_{\xi_0}^i\| \mathcal{T}_{\xi_0,i}^{(k)}\widetilde C_{\xi_0,k} ;
\\
\label{eq:hyp-coord-cvgce-sacrifice-3}
\frac{\|e^{(k)}_i\|}{|\det D\Phi_{\xi_0}^i|} 
& 
\leq \frac{1}{\|D\Phi_{\xi_0}^i\|} + \widetilde C_{\xi_0,k}\|D\Phi_{\xi_0}^i\|  \sum_{j=i}^{k-1} \frac{|\det D\Phi_{\xi_i}^{j-i}|}{\|D\Phi_{\xi_0}^j\|^2 C_{\xi_j,1}}.
\end{align}
\end{lemma}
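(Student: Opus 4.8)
The plan is to derive Lemma~\ref{lem:sacrifice} directly from Lemma~\ref{lem:hyp-coord-cvgce-apriori} by replacing, term by term in each of the three sums, the factor \( \|D\Phi_{\xi_0}^j\|\,\|D\Phi_{\xi_j}\|/\|D\Phi_{\xi_0}^{j+1}\| \) (respectively \( \|D\Phi_{\xi_j}\|/(\|D\Phi_{\xi_0}^j\|\,\|D\Phi_{\xi_0}^{j+1}\|) \) in the third bound) by the larger quantity \( 1/C_{\xi_j,1} \) (respectively \( 1/(\|D\Phi_{\xi_0}^j\|^2\,C_{\xi_j,1}) \)). No new geometric input is needed beyond the a priori bounds; the only ingredient is submultiplicativity of the conorm. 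Since the prefactor \( \widetilde C_{\xi_0,k} \), the leading terms \( \|(D\Phi_{\xi_0}^i)^{-1}\|^{-1} \), \( \|D\Phi_{\xi_0}^i\| \), \( 1/\|D\Phi_{\xi_0}^i\| \), and the determinant weights \( |\det D\Phi_{\xi_i}^{j-i}| \) are untouched, the whole argument reduces to a single scalar inequality checked inside each summand.

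First I would record the elementary fact that for invertible linear maps \( A, B \) on a two-dimensional space, \( \|AB\| \geq \|A^{-1}\|^{-1}\|B\| \): if \( v \) is a unit vector with \( \|Bv\| = \|B\| \), then \( \|ABv\| \geq \|A^{-1}\|^{-1}\|Bv\| = \|A^{-1}\|^{-1}\|B\| \). Applying this with \( A = D\Phi_{\xi_j} \), \( B = D\Phi_{\xi_0}^j \) and the chain rule \( D\Phi_{\xi_0}^{j+1} = D\Phi_{\xi_j}\circ D\Phi_{\xi_0}^j \) gives
\[
\|D\Phi_{\xi_0}^{j+1}\| \;\geq\; \|(D\Phi_{\xi_j})^{-1}\|^{-1}\,\|D\Phi_{\xi_0}^j\|
\qquad\text{for all } i \leq j \leq k-1 .
\]
Here \( D\Phi_{\xi_j} \) is invertible because hyperbolic coordinates of order \( j+1 \leq k \) are assumed to exist at \( \xi_0 \), so \( \det D\Phi_{\xi_0}^{j+1} \neq 0 \); in particular \( C_{\xi_j,1} > 0 \). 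Dividing the displayed inequality by \( \|D\Phi_{\xi_0}^{j+1}\|\,\|(D\Phi_{\xi_j})^{-1}\|^{-1} \) and recalling \( C_{\xi_j,1} = \|(D\Phi_{\xi_j})^{-1}\|^{-1}/\|D\Phi_{\xi_j}\| \) yields
\[
\frac{\|D\Phi_{\xi_0}^j\|\,\|D\Phi_{\xi_j}\|}{\|D\Phi_{\xi_0}^{j+1}\|} \;\leq\; \frac{1}{C_{\xi_j,1}},
\qquad
\frac{\|D\Phi_{\xi_j}\|}{\|D\Phi_{\xi_0}^j\|\,\|D\Phi_{\xi_0}^{j+1}\|} \;\leq\; \frac{1}{\|D\Phi_{\xi_0}^j\|^2\,C_{\xi_j,1}} .
\]

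To finish, I would substitute the first of these into the sums appearing in \eqref{eq:hyp-coord-cvgce-apriori-1} and \eqref{eq:hyp-coord-cvgce-apriori-2}, which turns each sum into \( \sum_{j=i}^{k-1} C_{\xi_0,j}/C_{\xi_j,1} = \mathcal{T}_{\xi_0,i}^{(k)} \); this gives \eqref{eq:hyp-coord-cvgce-sacrifice-1} and \eqref{eq:hyp-coord-cvgce-sacrifice-2} verbatim. Substituting the second inequality into \eqref{eq:hyp-coord-cvgce-apriori-3} (the determinant factor \( |\det D\Phi_{\xi_i}^{j-i}| \geq 0 \) is carried along unchanged) gives \eqref{eq:hyp-coord-cvgce-sacrifice-3}. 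The only point requiring care — and the closest thing to an ``obstacle'' — is purely a matter of bookkeeping: the conorm estimate must be applied by peeling off the \emph{last} factor \( D\Phi_{\xi_j} \) of the cocycle (so that it is \( \|D\Phi_{\xi_0}^{j+1}\| \), not \( \|D\Phi_{\xi_0}^j\| \), that gets bounded below), and one must keep track that every operator involved is invertible so that the reciprocal norms and \( C_{\xi_j,1} \) are finite and positive, which is exactly what the standing hypothesis (hyperbolic coordinates of all orders \( \leq k \) exist at \( \xi_0 \)) secures. Everything else is an inequality-by-inequality comparison with Lemma~\ref{lem:hyp-coord-cvgce-apriori}.
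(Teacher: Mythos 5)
Your proposal is correct and is essentially identical to the paper's own proof: the paper likewise observes $\|D\Phi_{\xi_0}^{j+1}\| \geq \|D\Phi_{\xi_0}^j\|\,\|(D\Phi_{\xi_j})^{-1}\|^{-1}$, deduces the two scalar inequalities, and substitutes them termwise into the three bounds of Lemma~\ref{lem:hyp-coord-cvgce-apriori}. Your added justification of the conorm inequality via the most expanded unit vector of $B$ is a correct (and slightly more explicit) version of what the paper takes as immediate.
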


Each of these sets of estimates will be used when assuming either  Condition \eqref{eq:condition-I}  or \eqref{eq:condition-II}   in the Definition \ref{def:nonsingular-QHv2-I} of quasi-hyperbolicity. 
Assuming \eqref{eq:condition-I} it will be more convenient to apply  Lemma~\ref{lem:hyp-coord-cvgce-apriori} whereas assuming \eqref{eq:condition-II} it will be more convenient to use Lemma~\ref{lem:sacrifice}.

\begin{proof}[Proof of Lemma \ref{lem:hyp-coord-cvgce-apriori}]
    To estimate \( \|e^{(k)}-e^{(i)}\|\), we write $\|e^{(k)} - e^{(i)}\| \leq \sum_{j=i}^{k-1} \|e^{(j+1)} - e^{(j)}\|$ and estimate \( \|e^{(j+1)}-e^{(j)}\|\) for \(j\in\{i,...,k-1\}\). We 
  write \begin{equation}\label{eq:ej-theta}
  e^{(j)}=\cos\theta e^{(j+1)}+\sin\theta f^{(j+1)}
  \end{equation}
  for some $\theta = \theta_j$, $|\theta| \leq \pi/2$, which implies
  \begin{equation}\label{eq:ej-sin-est}
  \|e^{(j+1)} - e^{(j)}\| = \big((1-\cos\theta)^2 + \sin^2\theta\big)^{1/2} \leq \sqrt{2}|\sin\theta|.
  \end{equation}
  By orthogonality of $\{e^{(j+1)}_{j+1}, f^{(j+1)}_{j+1}\}$, after applying $D\Phi_{\xi_0}^j$ to both sides of \eqref{eq:ej-theta} and taking the norm, we get
\[
  \|e^{(j)}_{j+1}\|^2=\cos^2\theta \|e^{(j+1)}_{j+1}\|^2+\sin^2\theta \|f^{(j+1)}_{j+1}\|^2.
\]
This implies
\[
\sin^2\theta=\frac{\left(\frac{\|e^{(j)}_{j+1}\|}{\|f^{(j+1)}_{j+1}\|}\right)^2-\left(\frac{\|e^{(j+1)}_{j+1}\|}{\|f^{(j+1)}_{j+1}\|}\right)^2}{1-\left(\frac{\|e^{(j+1)}_{j+1}\|}{\|f^{(j+1)}_{j+1}\|}\right)^2}\leq
 \frac{\left(\frac{\|e^{(j)}_{j+1}\|}{\|f^{(j+1)}_{j+1}\|}\right)^2}{1-\left(\frac{\|e^{(j+1)}_{j+1}\|}{\|f^{(j+1)}_{j+1}\|}\right)^2}.
\]
Notice that \[\|e^{(j)}_{j+1}\|\leq \|D\Phi_{\xi_{j}}\|\|e^{(j)}_j\| = \|D\Phi_{\xi_j}\| \|(D\Phi_{\xi_0}^j)^{-1}\|^{-1} \quad \textrm{and} \quad \|f^{(j+1)}_{j+1}\| = \|D\Phi_{\xi_0}^{j+1}\|,\] and also that \[\frac{\|e^{(j+1)}_{j+1}\|}{\|f^{(j+1)}_{j+1}\|} = C_{\xi_{0},j+1}.\] 
Using also the fact that $\|(D\Phi_{\xi_0}^j)^{-1}\|^{-1} = C_{\xi_0,j} \|D\Phi_{\xi_0}^j\|$, we obtain
\begin{equation}\label{eq:sin}
\begin{aligned}
 \sin^2\theta &\leq
 \frac{\left(\frac{\|e^{(j)}_{j+1}\|}{\|f^{(j+1)}_{j+1}\|}\right)^2}{1-\left(\frac{\|e^{(j+1)}_{j+1}\|}{\|f^{(j+1)}_{j+1}\|}\right)^2} \leq 
  \frac{1}{1- C^{2}_{\xi_{0},j+1}}
 \frac{\|D\Phi_{\xi_{j}}\|^{2}\|(D\Phi_{\xi_0}^j)^{-1}\|^{-2}}{\|D\Phi_{\xi_0}^{j+1}\|^{2} } \\ 
 &= \frac{1}{1-C^2_{\xi_0, j+1}} \frac{C_{\xi_0, j}^2 \|D\Phi_{\xi_0}^j\|^2\|D\Phi_{\xi_j}\|^2}{\|D\Phi_{\xi_0}^{j+1}\|^2}
 \end{aligned}
 \end{equation}

Therefore, \eqref{eq:ej-sin-est} and \eqref{eq:sin} give us:
\[
\begin{aligned}
\|e^{(k)}-e^{(i)}\| &\leq\sum_{j=i}^{k-1}\|e^{(j)}-e^{(j+1)}\|\\
&\leq \sqrt{\frac{2}{1- C_{\xi_0, i}^2}} \sum_{j=i}^{k-1} \frac{C_{\xi_0,j} \|D\Phi_{\xi_0}^j\|\|D\Phi_{\xi_j}\|}{\|D\Phi_{\xi_0}^{j+1}\|} \\
&\leq \widetilde C_{\xi_0,k}\sum_{j=i}^{k-1} \frac{C_{\xi_0,j} \|D\Phi_{\xi_0}^j\|\|D\Phi_{\xi_j}\|}{\|D\Phi_{\xi_0}^{j+1}\|}.
\end{aligned}
\]
This gives us \eqref{eq:hyp-coord-cvgce-apriori-1}. To prove \eqref{eq:hyp-coord-cvgce-apriori-2}, we use \eqref{eq:hyp-coord-cvgce-apriori-1} to show: 
\[
    \|e^{(k)}_i\| \leq \|e^{(i)}_i\| + \|D\Phi_{\xi_0}^i\| \|e^{(k)} - e^{(i)}\| 
    \leq \|(D\Phi_{\xi_0}^i)^{-1}\|^{-1} + \widetilde C_{\xi_0,k} \|D\Phi_{\xi_0}^i\| \sum_{j=i}^{k-1} \frac{C_{\xi_0,j} \|D\Phi_{\xi_0}^j\|\|D\Phi_{\xi_j}\|}{\|D\Phi_{\xi_0}^{j+1}\|}.
\]
Finally, noting $C_{\xi_0,j}\|D\Phi_{\xi_0}^j\|/|\det D\Phi_{\xi_0}^j| = \|D\Phi_{\xi_0}^j\|^{-1}$, factoring out $|\det D\Phi_{\xi_0}^i|$ from the summands in \eqref{eq:hyp-coord-cvgce-apriori-2} gives us:
\[
\frac{C_{\xi_0,j} \|D\Phi_{\xi_0}^j\|\|D\Phi_{\xi_j}\|}{\|D\Phi_{\xi_0}^{j+1}\|} = \frac{|\det D\Phi_{\xi_0}^i||\det D\Phi_{\xi_i}^{j-i}|}{|\det D\Phi_{\xi_0}^j|} \frac{C_{\xi_0,j}\|D\Phi_{\xi_0}^j\|\|D\Phi_{\xi_j}\|}{\|D\Phi_{\xi_0}^{j+1}\|} = |\det D\Phi_{\xi_0}^i| \frac{|\det D\Phi_{\xi_i}^{j-i}|\|D\Phi_{\xi_j}\|}{\|D\Phi_{\xi_0}^j\|\|D\Phi_{\xi_0}^{j+1}\|},
\]
and since $\|(D\Phi_{\xi_0}^i)^{-1}\|^{-1}/|\det D\Phi_{\xi_0}^i| = \|D\Phi_{\xi_0}\|^{-1}$, dividing \eqref{eq:hyp-coord-cvgce-apriori-2} by $|\det D\Phi_{\xi_0}^i|$ gives us \eqref{eq:hyp-coord-cvgce-apriori-3}. 
\end{proof}

\begin{proof}[Proof of Lemma \ref{lem:sacrifice}]
Observe first of all  that we always have $\|D\Phi_{\xi_0}^{j+1}\| \geq \|D\Phi_{\xi_0}^j\| \|(D\Phi_{\xi_j})^{-1}\|^{-1}$ and so 
\begin{equation}\label{eq:sacrifice-1}
\frac{\|D\Phi_{\xi_0}^j\|\|D\Phi_{\xi_j}\|}{\|D\Phi_{\xi_0}^{j+1}\|} \leq \frac{1}{C_{\xi_j,1}}
\quand
\frac{\|D\Phi_{\xi_j}\|}{\|D\Phi_{\xi_0}^j\|\|D\Phi_{\xi_0}^{j+1}\|} \leq \frac{1}{\|D\Phi_{\xi_0}^j\|^2C_{\xi_j,1}}
\end{equation}
Substituting the first inequality into  \eqref{eq:hyp-coord-cvgce-apriori-1} and \eqref{eq:hyp-coord-cvgce-apriori-2} gives \eqref{eq:hyp-coord-cvgce-sacrifice-1} and \eqref{eq:hyp-coord-cvgce-sacrifice-2} respectively, and substituting the second inequality into   \eqref{eq:hyp-coord-cvgce-apriori-3}  gives \eqref{eq:hyp-coord-cvgce-sacrifice-3}.  \end{proof}

\begin{remark}
    The first step in the proof of Lemma \ref{lem:hyp-coord-cvgce-apriori} is to use the triangle inequality to write $\|e^{(k)} - e^{(i)}\| \leq \sum_{j=i}^{k-1} \|e^{(j+1)} - e^{(j)}\|$, and then to estimate each term $\|e^{(j+1)} - e^{(j)}\|$. Strictly speaking, this first step is not necessary to obtain an \emph{a priori} bound; one could use the same arguments to directly estimate $\|e^{(k)} - e^{(i)}\|$ instead. Doing so would, for example, give us the bound 
    \begin{equation}\label{eq:cvgce-alternative-1}
        \|e^{(k)} - e^{(i)}\| \leq  \widetilde C_{\xi_0,k}\frac{ C_{\xi_0,i} \|D\Phi_{\xi_0}^i\|\|D\Phi_{\xi_i}^{k-i}\|}{\|D\Phi_{\xi_i}^k\|}
    \end{equation}
    
    instead of the bound in \eqref{eq:hyp-coord-cvgce-apriori-1}, and similar alternative bounds to \eqref{eq:hyp-coord-cvgce-apriori-2} and \eqref{eq:hyp-coord-cvgce-apriori-3} can also be derived. Rather than in terms of the sum
    \begin{equation}\label{eq:comparison-j}
    \sum_{j=i}^{k-1}\frac{C_{\xi_0,j}\|D\Phi_{\xi_0}^j\|\|D\Phi_{\xi_j}\|}{\|D\Phi_{\xi_0}^{j+1}\|},
    \end{equation}
    we instead get a bound in terms of the quotient
    \begin{equation}\label{eq:comparison-k}
    \frac{C_{\xi_0,i}\|D\Phi_{\xi_0}^i\|\|D\Phi_{\xi_i}^{k-i}\|}{\|D\Phi_{\xi_0}^k\|}.
    \end{equation}
    The former takes the sum of the estimates for each transition from $\xi_j$ to $\xi_{j+1}$ under $\Phi$, whereas the latter estimates directly the transition from $\xi_i$ to $\xi_k$ under $\Phi^{k-i}$. The main reason we estimate the sum in \eqref{eq:comparison-j} instead of the term in \eqref{eq:comparison-k} becomes apparent only after introducing quasi-hyperbolicity assumptions. We will see in Propositions \ref{prop:sing-hyp-coord-cvgce} and \ref{prop:sing-hyp-coord-cvgce-sacrifice} that $\|e^{(k)} - e^{(i)}\|$ and $\|e^{(k)}_i\|$ both have upper bounds that are exponential in $i$ but independent of $k$. This is because we will approximate the sum in \eqref{eq:comparison-j} with the tail of a geometric series, which decays exponentially with $i$ and is independent of $k$. However, the expression in \eqref{eq:comparison-k} cannot be given an upper bound independent of $k$ without introducing much stronger restrictions than quasi-hyperbolicity.
\end{remark}

\subsection{Convergence with hyperbolicity assumptions}\label{subsec:cvgce}

We now estimate $\|e^{(k)} - e^{(i)}\|$ and $\|e^{(k)}_i\|$ applying the bounds given in the Definition \ref{def:nonsingular-QHv2-I}  of quasi-hyperbolicity, treating separately the situations in which Condition \eqref{eq:condition-I} and Condition \eqref{eq:condition-II}  are satisfied (sections \ref{subsubsec:cvgce-I} and \ref{subsubsec:cvgce-II} respectively). Note that the \emph{a priori} estimates in Lemma \ref{lem:hyp-coord-cvgce-apriori} are stronger than the estimates in Lemma~\ref{lem:sacrifice}. However, the conclusions we obtain from these lemmas, which are formulated in Propositions \ref{prop:sing-hyp-coord-cvgce} and \ref{prop:sing-hyp-coord-cvgce-sacrifice} respectively, are not similarly related: it is not immediate that one set of estimates is stronger than the other. This is because we bound different terms in different ways in the two cases.

\subsubsection{Convergence with quasi-hyperbolicity of type (I)}\label{subsubsec:cvgce-I}

First we suppose that the constants satisfy  Condition \eqref{eq:condition-I} of Definition \ref{def:nonsingular-QHv2-I} . 


\begin{proposition}\label{prop:sing-hyp-coord-cvgce}
    Suppose $\xi_0$ is singular quasi-hyperbolic up to time $k$ and satisfies condition \eqref{eq:condition-I}. Then: 
    \begin{equation}\label{eq:cvgce-k-quasi-hyp-1}
\|e^{(k)} - e^{(i)}\| \leq Q_1 \left( \frac{\Gamma\widetilde\Gamma c}{\lambda}\right)^i;
\end{equation}
\begin{equation}\label{eq:cvgce-k-quasi-hyp-2}
\|e^{(k)}_i\| \leq Q_1\left( \frac{\Gamma^2 \widetilde \Gamma c}{\lambda}\right)^i;
\end{equation}
\begin{equation}\label{eq:cvgce-k-quasi-hyp-3}
\frac{\|e^{(k)}_i\|}{|\det D\Phi_{\xi_0}^i|} \leq Q_2 \left( \frac{\Gamma \widetilde \Gamma}{\lambda^2}\right)^i
\end{equation}

\end{proposition}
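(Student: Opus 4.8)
The strategy is to feed the quasi-hyperbolicity bounds of Definition~\ref{def:nonsingular-QHv2-I} into the \emph{a priori} estimates of Lemma~\ref{lem:hyp-coord-cvgce-apriori}, which under condition~\eqref{eq:condition-I} give sharper control than Lemma~\ref{lem:sacrifice}. The three inequalities \eqref{eq:cvgce-k-quasi-hyp-1}--\eqref{eq:cvgce-k-quasi-hyp-3} share the same engine: in each of \eqref{eq:hyp-coord-cvgce-apriori-1}--\eqref{eq:hyp-coord-cvgce-apriori-3} we must (a) bound the prefactor $\widetilde C_{\xi_0,k}$, (b) bound each summand in the sum $\sum_{j=i}^{k-1}$ by a term of a convergent geometric series whose ratio is strictly less than $1$ by \eqref{eq:condition-I}, and (c) sum the geometric tail, which produces a bound exponential in $i$ and independent of $k$.

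\textbf{Step 1: the prefactor.} Since $C_{\xi_0,i} \leq Bc^i \leq Bc < 1$ for all $i$, we get $1 - C_{\xi_0,i}^2 \geq 1 - B^2c^2$, hence $\widetilde C_{\xi_0,k} \leq \sqrt{2/(1-B^2c^2)} = Q_0$ uniformly in $k$. This is where $Q_0$ enters.

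\textbf{Step 2: bounding a generic summand.} Consider the summand $C_{\xi_0,j}\|D\Phi_{\xi_0}^j\|\|D\Phi_{\xi_j}\|/\|D\Phi_{\xi_0}^{j+1}\|$ appearing in \eqref{eq:hyp-coord-cvgce-apriori-1}. Using $C_{\xi_0,j} \leq Bc^j$, the upper bound $\|D\Phi_{\xi_0}^j\| < D\Gamma^j$ from \eqref{eq:constants-basic-sing1}, the pointwise bound $\|D\Phi_{\xi_j}\| < D\Gamma\widetilde\Gamma^j$ from \eqref{eq:constants-basic2-sing1}, and the lower bound $\|D\Phi_{\xi_0}^{j+1}\| > C\lambda^{j+1}$, the summand is at most
\[
\frac{Bc^j \cdot D\Gamma^j \cdot D\Gamma\widetilde\Gamma^j}{C\lambda^{j+1}} = \frac{BD^2\Gamma}{C\lambda}\left(\frac{\Gamma\widetilde\Gamma c}{\lambda}\right)^j.
\]
Condition~\eqref{eq:condition-I} gives $c < \lambda^3/\Gamma^3\widetilde\Gamma^3 < \lambda/\Gamma\widetilde\Gamma$, so the ratio $r := \Gamma\widetilde\Gamma c/\lambda < 1$, and $\sum_{j=i}^{\infty} r^j = r^i/(1-r) = r^i/(1 - \Gamma\widetilde\Gamma c/\lambda) = \lambda r^i/(\lambda - \Gamma\widetilde\Gamma c)$. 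Multiplying by $Q_0$ and collecting constants yields $\|e^{(k)}-e^{(i)}\| \leq Q_0 \cdot \frac{BD^2\Gamma}{C\lambda}\cdot\frac{\lambda}{\lambda - \Gamma\widetilde\Gamma c}\, r^i = \frac{Q_0 BD^2\Gamma}{C(\lambda-\Gamma\widetilde\Gamma c)}\, r^i$; together with a crude $j=i$-only contribution one absorbs into $Q_1 = BD + Q_0BD^3\Gamma/(C(\lambda-\Gamma\widetilde\Gamma c))$, giving \eqref{eq:cvgce-k-quasi-hyp-1}. For \eqref{eq:cvgce-k-quasi-hyp-2}, use \eqref{eq:hyp-coord-cvgce-apriori-2}: bound $\|(D\Phi_{\xi_0}^i)^{-1}\|^{-1} = C_{\xi_0,i}\|D\Phi_{\xi_0}^i\| \leq Bc^iD\Gamma^i = BD(\Gamma c)^i$, and multiply the sum by the extra factor $\|D\Phi_{\xi_0}^i\| < D\Gamma^i$, which turns the $r^i$ into $(\Gamma^2\widetilde\Gamma c/\lambda)^i$; both terms fit under the same $Q_1$. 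For \eqref{eq:cvgce-k-quasi-hyp-3}, start from \eqref{eq:hyp-coord-cvgce-apriori-3}: the summand now is $|\det D\Phi_{\xi_i}^{j-i}|\|D\Phi_{\xi_j}\|/(\|D\Phi_{\xi_0}^j\|\|D\Phi_{\xi_0}^{j+1}\|)$; bound $|\det D\Phi_{\xi_i}^{j-i}| \leq b^{j-i}$ (product of the per-step determinant bounds from \eqref{eq:constants-basic2-sing1}), $\|D\Phi_{\xi_j}\| < D\Gamma\widetilde\Gamma^j$, and the two norm factors from below by $C\lambda^j$ and $C\lambda^{j+1}$; the leading term $1/\|D\Phi_{\xi_0}^i\| \leq 1/(C\lambda^i)$ and the factor $\|D\Phi_{\xi_0}^i\| < D\Gamma^i$ out front combine to give the ratio $(\Gamma\widetilde\Gamma/\lambda^2)^i$ after using $\widetilde\Gamma b < \lambda^2$ from~\eqref{eq:condition-I} to sum the geometric series in $b/\lambda^2$; constants collapse into $Q_2 = 1/C + Q_0D^2\Gamma\lambda/(C^2(\lambda^2-\widetilde\Gamma b))$.

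\textbf{Main obstacle.} The conceptually trivial but technically fiddly part is the bookkeeping: each summand carries several powers ($c^j$, $\Gamma^j$, $\widetilde\Gamma^j$, $\lambda^{-j}$, and in the third inequality $b^{j-i}$) that must be regrouped so that the geometric ratio is exactly the one claimed, and one must check that the shift of summation index (the sum runs $j=i,\dots,k-1$, not from $0$) produces the factor $r^i$ cleanly and that the leftover $1/(1-r)$ is absorbed correctly into the stated $Q_\ell$. One also must be careful that in \eqref{eq:hyp-coord-cvgce-apriori-3} the determinant bound $|\det D\Phi_{\xi_i}^{j-i}| \leq b^{j-i}$ combined with $\|D\Phi_{\xi_0}^j\|^2 \geq C^2\lambda^{2j}$ yields a summand proportional to $(b/\lambda^2)^{j-i}\cdot(\widetilde\Gamma^j/\lambda)$, and that the residual $\widetilde\Gamma$-growth is controlled precisely because $\widetilde\Gamma b < \lambda^2$ forces $\widetilde\Gamma < \lambda^2/b$, keeping the net ratio below $1$; verifying all denominators in $Q_1,Q_2$ are positive was already done in the text following their definition. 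Modulo this arithmetic, the proof is a direct substitution.
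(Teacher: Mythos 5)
Your proposal is correct and follows essentially the same route as the paper's proof: bound $\widetilde C_{\xi_0,k}$ uniformly by $Q_0$, substitute the quasi-hyperbolicity bounds \eqref{eq:constants-basic-sing1}--\eqref{eq:constants-basic2-sing1} into the \emph{a priori} estimates of Lemma~\ref{lem:hyp-coord-cvgce-apriori}, and sum the resulting geometric tails whose ratios ($\Gamma\widetilde\Gamma c/\lambda$ and $\widetilde\Gamma b/\lambda^2$) are less than $1$ by condition~\eqref{eq:condition-I}. The only quibble is cosmetic: the $BD$ term in $Q_1$ arises from the $\|(D\Phi_{\xi_0}^i)^{-1}\|^{-1}\leq BD(\Gamma c)^i$ contribution in \eqref{eq:cvgce-k-quasi-hyp-2}, not from a ``$j=i$-only contribution'' to \eqref{eq:cvgce-k-quasi-hyp-1}, though your bound is still correct since $D\geq 1$ makes the stated $Q_1$ dominate your constant.
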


\begin{proof}
We first note that by the second set of inequalities in \eqref{eq:constants-basic-sing1} and the fact that $c < 1$,
    \begin{equation}\label{eq:sqrt-est}
\widetilde C_{\xi_0, k} = \max_{1 \leq i \leq k} \sqrt{\frac{2}{1-C_{\xi_0,i}^2}} \leq \max_{1 \leq i \leq k} \sqrt{ \frac{2}{1-B^2 c^{2i}}} = \sqrt{\frac{2}{1-B^2 c^2}} = Q_0. 
\end{equation}
    So \eqref{eq:constants-basic-sing1}, \eqref{eq:constants-basic2-sing1}, \eqref{eq:hyp-coord-cvgce-apriori-1}, and \eqref{eq:sqrt-est} give us: 
    \begin{equation}\label{eq:sing-sum-est-v1}
    \begin{aligned}
        \|e^{(k)} - e^{(i)}\| &\leq \widetilde C_{\xi_0,k} \sum_{j=i}^{k-1} \frac{C_{\xi_0, j} \|D\Phi_{\xi_0}^j\|\|D\Phi_{\xi_j}\|}{\|D\Phi_{\xi_0}^{j+1}\|}\\
        &\leq Q_0\sum_{j=i}^{k-1} \frac{BD^2c^j \Gamma^{j+1}\widetilde\Gamma^j}{C\lambda^{j+1}} \\
        &\leq \frac{Q_0BD^2\Gamma}{C\lambda} \sum_{j=i}^{\infty} \left( \frac{\Gamma\widetilde\Gamma c}{\lambda}\right)^j \\
        &= \frac{Q_0 BD^2\Gamma}{C\lambda} \frac{1}{1-\frac{\Gamma \widetilde \Gamma c}{\lambda}} \left( \frac{\Gamma \widetilde \Gamma c}{\lambda}\right)^i \\
        &= \frac{Q_0 BD^2\Gamma}{C(\lambda - \Gamma \widetilde \Gamma c)} \left( \frac{\Gamma \widetilde \Gamma c}{\lambda}\right)^i.
        \end{aligned}
    \end{equation}
    The first equality follows because $\Gamma \widetilde \Gamma c < \lambda$ as a consequence of \eqref{eq:condition-I}. So \eqref{eq:cvgce-k-quasi-hyp-1} now follows. 
    Next, note:
    \begin{equation}\label{eq:contraction-est}
    \|(D\Phi_{\xi_0}^i)^{-1}\|^{-1} = \|D\Phi_{\xi_0}^i\|C_{\xi_0,i} \leq BD(\Gamma c)^i.
    \end{equation}
    So from \eqref{eq:hyp-coord-cvgce-apriori-2}, \eqref{eq:contraction-est}, and \eqref{eq:sing-sum-est-v1}, we obtain: 
    \begin{align*}
        \|e^{(k)}_i\| &\leq \|(D\Phi_{\xi_0}^i)^{-1}\|^{-1} + \widetilde C_{\xi_0,k}\|D\Phi_{\xi_0}^i\|  \sum_{j=i}^{k-1} \frac{C_{\xi_0,j} \|D\Phi_{\xi_0}^j\|\|D\Phi_{\xi_j}\|}{\|D\Phi_{\xi_0}^{j+1}\|} \\
        &\leq BD(\Gamma c)^i + \frac{Q_0 BD^3\Gamma}{C(\lambda - \Gamma \widetilde \Gamma c)} \left( \frac{\Gamma^2 \widetilde \Gamma c}{\lambda}\right)^i \\
        &\leq \left( BD + \frac{Q_0 BD^3\Gamma}{C(\lambda - \Gamma \widetilde \Gamma c)}\right) \left( \frac{\Gamma^2 \widetilde \Gamma c}{\lambda}\right)^i \\
        &= Q_1 \left( \frac{\Gamma^2 \widetilde \Gamma c}{\lambda}\right)^i.
    \end{align*}
    Note the final inequality follows because $\Gamma c \leq \Gamma^2 \widetilde \Gamma c /\lambda$ because $\Gamma \geq \lambda$ and $\widetilde \Gamma \geq 1$ by \eqref{eq:condition-I}. Finally, applying \eqref{eq:constants-basic-sing1}, \eqref{eq:constants-basic2-sing1}, and \eqref{eq:sqrt-est} to \eqref{eq:hyp-coord-cvgce-apriori-3}, we obtain: 
    \begin{align*}
        \frac{\|e^{(k)}_i\|}{|\det D\Phi_{\xi_0}^i|} &\leq \frac{1}{\|D\Phi_{\xi_0}^i\|} + \widetilde C_{\xi_0,k}\|D\Phi_{\xi_0}^i\| \sum_{j=i}^{k-1} \frac{|\det D\Phi_{\xi_i}^{j-i}|  \|D\Phi_{\xi_j}\|}{\|D\Phi_{\xi_0}^j\| \|D\Phi_{\xi_0}^{j+1}\|} \\
        &\leq \frac 1{C\lambda^i} + Q_0D\Gamma^i \sum_{j=i}^{k-1} \frac{b^{j-i} D\Gamma \widetilde \Gamma^j}{C^2 \lambda^{2j+1}} \\
        &\leq \frac{1}{C\lambda^i} + \frac{Q_0D^2\Gamma}{C^2\lambda} \frac{\Gamma^i}{b^i} \sum_{j=i}^\infty \left( \frac{\widetilde \Gamma b}{\lambda^2}\right)^j \\
        &= \frac{1}{C\lambda^i} + \frac{Q_0 D^2\Gamma}{C^2\lambda} \frac{1}{1-\frac{\widetilde \Gamma b}{\lambda^2}} \frac{\Gamma^i}{b^i} \left( \frac{\widetilde \Gamma b}{\lambda^2}\right)^i \\
        &= \frac 1{C\lambda^i} + \frac{Q_0 D^2\Gamma\lambda }{C^2(\lambda^2 - \widetilde \Gamma b)} \left( \frac{\Gamma \widetilde \Gamma}{\lambda^2}\right)^i \\
        &\leq \left( \frac 1 C + \frac{Q_0 D^2\Gamma\lambda }{C^2(\lambda^2 - \widetilde \Gamma b)} \right) \left( \frac{\Gamma \widetilde \Gamma}{\lambda^2}\right)^i \\
        &= Q_2 \left( \frac{\Gamma \widetilde \Gamma}{\lambda^2}\right)^i.
    \end{align*}
The final inequality holds because $\Gamma \widetilde \Gamma/\lambda^2 \geq 1/\lambda$, since $\Gamma \geq \lambda$ and $\widetilde \Gamma \geq 1$. 
\end{proof}

\subsubsection{Convergence with quasi-hyperbolicity of type (II)}\label{subsubsec:cvgce-II}

We now suppose the constants satisfy condition \eqref{eq:condition-II}  in Definition \ref{def:nonsingular-QHv2-I}.
  
\begin{proposition}\label{prop:sing-hyp-coord-cvgce-sacrifice}
Suppose $\xi_0$ is singular hyperbolic up to time $k$ and satisfies \eqref{eq:condition-II}. Then:
\begin{equation}\label{eq:cvgce-k-quasi-hyp-sac-1}
\|e^{(k)} - e^{(i)}\| \leq \widetilde Q_1\left( \frac{c}{\tilde c}\right)^i;
\end{equation}
\begin{equation}\label{eq:cvgce-k-quasi-hyp-sac-2}
\|e^{(k)}_i\| \leq \widetilde Q_1 \left( \frac{\Gamma c}{\tilde c}\right)^i;
\end{equation}
\begin{equation}\label{eq:cvgce-k-quasi-hyp-sac-3}
\frac{\|e^{(k)}_i\|}{|\det D\Phi_{\xi_0}^i|} \leq \widetilde Q_2 \left( \frac{\Gamma}{\lambda^2 \tilde c}\right)^i.
\end{equation}
\end{proposition}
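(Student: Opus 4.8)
The plan is to run exactly the argument used in the proof of Proposition~\ref{prop:sing-hyp-coord-cvgce}, but starting from the coarser \emph{a priori} bounds of Lemma~\ref{lem:sacrifice} in place of those of Lemma~\ref{lem:hyp-coord-cvgce-apriori}. This is where the one-step co-eccentricity lower bound $C_{\xi_{i-1},1}\geq\widetilde B\tilde c^{\,i-1}$ from Condition~\eqref{eq:condition-II}, which plays no role in the type~(I) case, enters decisively: it is exactly what is needed to control the tail $\mathcal T^{(k)}_{\xi_0,i}$ of \eqref{eq:tail-def}. As a preliminary step I would record, exactly as in \eqref{eq:sqrt-est}, that $C_{\xi_0,i}\leq Bc^i<1$ forces $\widetilde C_{\xi_0,k}\leq\sqrt{2/(1-B^2c^2)}=Q_0$.

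For \eqref{eq:cvgce-k-quasi-hyp-sac-1}: feed $C_{\xi_0,j}\leq Bc^j$ and $C_{\xi_j,1}\geq\widetilde B\tilde c^j$ into the definition \eqref{eq:tail-def} of $\mathcal T^{(k)}_{\xi_0,i}$ to get $\mathcal T^{(k)}_{\xi_0,i}\leq(B/\widetilde B)\sum_{j\geq i}(c/\tilde c)^j$, which is a convergent geometric tail precisely because \eqref{eq:condition-II} gives $c<\tilde c$; summing it yields $\mathcal T^{(k)}_{\xi_0,i}\leq\frac{B\tilde c}{\widetilde B(\tilde c-c)}(c/\tilde c)^i$, and inserting this together with $\widetilde C_{\xi_0,k}\leq Q_0$ into \eqref{eq:hyp-coord-cvgce-sacrifice-1} gives the claimed bound, since $\widetilde Q_1\geq\frac{Q_0 B\tilde c}{\widetilde B(\tilde c-c)}$. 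For \eqref{eq:cvgce-k-quasi-hyp-sac-2} I would use \eqref{eq:hyp-coord-cvgce-sacrifice-2} together with the bound just obtained, the identity $\|(D\Phi_{\xi_0}^i)^{-1}\|^{-1}=\|D\Phi_{\xi_0}^i\|\,C_{\xi_0,i}$, and the norm bounds $\|D\Phi_{\xi_0}^i\|<D\Gamma^i$, $C_{\xi_0,i}\leq Bc^i$, then collect all terms into the leading exponential $(\Gamma c/\tilde c)^i$ using $\tilde c\leq1$. For \eqref{eq:cvgce-k-quasi-hyp-sac-3} I would start from \eqref{eq:hyp-coord-cvgce-sacrifice-3}, bound $|\det D\Phi_{\xi_i}^{j-i}|\leq b^{j-i}$, $\|D\Phi_{\xi_0}^j\|^2>C^2\lambda^{2j}$, $C_{\xi_j,1}\geq\widetilde B\tilde c^j$, pull the factor $b^i$ out of the sum, and sum $\sum_{j\geq i}(b/\lambda^2\tilde c)^j$, which converges because \eqref{eq:condition-II} gives $b<\lambda^2\tilde c$; multiplying by $\widetilde C_{\xi_0,k}\|D\Phi_{\xi_0}^i\|\leq Q_0 D\Gamma^i$ and adding the remaining term $1/\|D\Phi_{\xi_0}^i\|<C^{-1}\lambda^{-i}$, absorbed using $\lambda\tilde c<\Gamma$ (which follows from $\Gamma>\lambda$ and $\tilde c\leq1$), produces exactly the constant $\widetilde Q_2$ and the factor $(\Gamma/\lambda^2\tilde c)^i$.

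The work here is essentially bookkeeping rather than anything conceptual, so there is no single hard obstacle; the thing to get right is that at each of the three estimates one must check that the geometric series being summed has ratio $<1$, and this is exactly where the three inequalities bundled into Condition~\eqref{eq:condition-II} get consumed, one per estimate ($c<\tilde c$ for the first two, $b<\lambda^2\tilde c$ for the third), together with the handful of elementary inequalities among the constants ($\tilde c\leq1$, $\lambda\tilde c<\Gamma$) that let the non-dominant additive terms be absorbed into the leading exponential. The only genuine subtlety is the one already flagged in the remark following Lemma~\ref{lem:sacrifice}: because the single-step and accumulated co-eccentricities are a priori unrelated, there is no ``free'' bound available here, and one really does need the lower bound on $C_{\xi_j,1}$ supplied by \eqref{eq:condition-II} — in the type~(I) setting one instead keeps the norms $\|D\Phi_{\xi_0}^j\|$ and $\|D\Phi_{\xi_j}\|$ explicit and uses Lemma~\ref{lem:hyp-coord-cvgce-apriori}.
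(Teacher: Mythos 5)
Your proposal follows the paper's proof essentially verbatim: the same tail estimate $\mathcal T^{(k)}_{\xi_0,i}\leq\frac{B\tilde c}{\widetilde B(\tilde c-c)}(c/\tilde c)^i$ combined with $\widetilde C_{\xi_0,k}\leq Q_0$ fed into Lemma~\ref{lem:sacrifice}, the same use of $\|(D\Phi^i_{\xi_0})^{-1}\|^{-1}\leq BD(\Gamma c)^i$ for the second bound, and the same geometric series $\sum_j(b/\lambda^2\tilde c)^j$ with the absorption $\Gamma\geq\lambda\tilde c$ for the third. The accounting of which inequality in Condition~\eqref{eq:condition-II} is consumed at each step is also correct, so there is nothing to add.
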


\begin{proof}[Proof of Proposition \ref{prop:sing-hyp-coord-cvgce-sacrifice}]

The proof essentially consists of applying the estimates in Definition \ref{def:nonsingular-QHv2-I} and in \eqref{eq:condition-II} to the \emph{a priori} estimates in Lemma \ref{lem:sacrifice}.
    Observe first that since $c < \tilde c$, 
    \begin{equation}\label{eq:sing-T-est}
    \mathcal T^{(k)}_{\xi_0, i} = \sum_{j=i}^{k-1} \frac{C_{\xi_0,j}}{C_{\xi_j,1}} < \sum_{j=i}^{\infty} \frac{Bc^j}{\widetilde B\tilde c^j} = \frac{B}{\widetilde B} \frac{1}{1-\frac{c}{\tilde c}} \left( \frac c{\tilde c}\right)^i = \frac{B\tilde c}{\widetilde B(\tilde c - c)} \left( \frac c{\tilde c}\right)^i.
    \end{equation}
    Note \eqref{eq:sqrt-est} also applies in this setting. By \eqref{eq:sqrt-est} and \eqref{eq:sing-T-est}, we get: 
    \begin{equation}\label{eq:sing-tail-cvgce-est}
        \mathcal T^{(k)}_{\xi_0, i} \widetilde C_{\xi_0,k} \leq \frac{Q_0B\tilde c}{\widetilde B(\tilde c - c)} \left( \frac{c}{\tilde c}\right)^i.
    \end{equation}
    Now \eqref{eq:cvgce-k-quasi-hyp-sac-1} follows from \eqref{eq:hyp-coord-cvgce-sacrifice-1} and \eqref{eq:sing-tail-cvgce-est}. Meanwhile, by \eqref{eq:hyp-coord-cvgce-sacrifice-2} and \eqref{eq:sing-tail-cvgce-est}, using again that $\|(D\Phi_{\xi_0}^i)^{-1}\|^{-1} \leq BD(\Gamma c)^i$ by \eqref{eq:contraction-est}, we get: 
    \begin{align*}
\|e^{(k)}_i\| &\leq \|(D\Phi_{\xi_0}^i)^{-1}\|^{-1} + \|D\Phi_{\xi_0}^i\|\mathcal T_{\xi_0,i}^{(k)} \widetilde C_{\xi_0,k} \\
&\leq BD(\Gamma c)^{i} + \frac{Q_0B\tilde c}{\widetilde B(\tilde c - c)} \left( \frac{\Gamma c}{\tilde c}\right)^i \\
&\leq \left( BD + \frac{Q_0B\tilde c}{\widetilde B(\tilde c - c)}\right) \left( \frac{\Gamma c}{\tilde c}\right)^i \\
&= \widetilde Q_1 \left( \frac{\Gamma c}{\tilde c}\right)^i.
    \end{align*}
    The final inequality holds because $\tilde c < 1$. This proves \eqref{eq:cvgce-k-quasi-hyp-sac-2}. Finally, if \eqref{eq:condition-II} holds, then by applying \eqref{eq:constants-basic-sing1} and \eqref{eq:constants-basic2-sing1} to \eqref{eq:hyp-coord-cvgce-sacrifice-3}, we obtain: 
    \begin{align*}
        \frac{\|e^{(k)}_i\|}{|\det D\Phi_{\xi_0}^i|} &\leq \frac{1}{\|D\Phi_{\xi_0}^i\|} + \|D\Phi_{\xi_0}^i\| \widetilde C_{\xi_0,k} \sum_{j=i}^{k-1} \frac{|\det D\Phi_{\xi_i}^{j-i}|}{\|D\Phi_{\xi_0}^j\|^2 C_{\xi_j,1}} \\
        &\leq \frac 1{C\lambda^i} + Q_0 D\Gamma^i \sum_{j=i}^{k-1} \frac{b^{j-i}}{C^2\lambda^{2j} \widetilde B \tilde c^j} \\
        &\leq \frac{1}{C\lambda^i} + \frac{Q_0D}{C^2\widetilde B} \frac{\Gamma^i}{b^i} \sum_{j=i}^\infty \left( \frac{b}{\lambda^2 \tilde c}\right)^j \\
        &= \frac{1}{C\lambda^i} + \frac{Q_0D}{C^2 \widetilde B} \frac{1}{1-\frac b{\lambda^2 \tilde c}} \frac{\Gamma^i}{b^i} \left( \frac{b}{\lambda^2 \tilde c}\right)^i \\
        &= \frac{1}{C\lambda^i} + \frac{Q_0 \lambda^2\tilde c}{C^2\widetilde B(\lambda^2 \tilde c - b)} \left( \frac{\Gamma}{\lambda^2 \tilde c}\right)^i \\
        &\leq \left( \frac 1 C + \frac{Q_0 D\lambda^2\tilde c}{C^2\widetilde B(\lambda^2 \tilde c - b)} \right) \left( \frac \Gamma{\lambda^2 \tilde c}\right)^i \\
        &= \widetilde Q_2 \left( \frac \Gamma{\lambda^2 \tilde c}\right)^i.
    \end{align*}
 The first equality holds by the second inequality in \eqref{eq:condition-II}, and the final inequality holds since $\Gamma/\lambda^2 \tilde c \geq 1/\lambda$, because $\tilde c \leq 1$ and because $\Gamma \geq \lambda$ by \eqref{eq:constants-basic-x}. 
\end{proof}

\begin{proof}[Proof of Theorem \ref{thm:sing-hyp-coord-cvgce}]
Theorem \ref{thm:sing-hyp-coord-cvgce} follows immediately from \eqref{eq:cvgce-k-quasi-hyp-1} and \eqref{eq:cvgce-k-quasi-hyp-sac-1}.
\end{proof}

\section{Slow Variation in Local Coordinates}\label{sec:ex-platypus}

We are now ready to start the proof of Theorem \ref{thm:main-phase}. In this section we prove  the second inequality in \eqref{eq:main-phase}, which gives a more explicit bound for a given choice of local coordinates. This is relatively simple and quite general and contains a couple of bounds which we will use again in the following sections.


\begin{proposition}\label{lem:ex-platypus} In normal coordinates based at $\xi_0$, the norm $\|D^{2}\Phi_{\xi_{0}}\|$ satisfies:
\begin{equation}\label{eq:second-derivative-norm}
\max_{\varsigma = x,y} \{ \|\partial_{\varsigma}(D\Phi_{\xi_{0}})\|\} \leq \|D^{2}\Phi_{\xi_{0}}\| \leq \sqrt2\max_{\varsigma = x,y} \{ \|\partial_{\varsigma}(D\Phi_{\xi_{0}})\|\}.
\end{equation} 
Moreover, for any tangent vector $v$ at $\xi_{0}$:
  \begin{equation}\label{eq:ex-platypus}
        \max_{\varsigma = x,y} \left\{ \left\|\left[ \partial_\varsigma\left( D\Phi_{\xi_0}\right)\right] v\right\|\right\} \leq \left\|D^2 \Phi_{\xi_0} \left(v, \cdot\right)\right\| \leq \sqrt 2 \max_{\varsigma = x,y} \left\{ \left\|\left[ \partial_\varsigma\left( D\Phi_{\xi_0}\right)\right] v\right\|\right\}.
    \end{equation}
 \end{proposition}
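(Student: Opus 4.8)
The plan is to reduce everything to a single elementary fact about $2\times 2$ matrices: if $A$ is a $2\times 2$ matrix with columns $A_1, A_2 \in \mathbb R^2$, then
\[
\max\{\|A_1\|, \|A_2\|\} \leq \|A\| \leq \sqrt 2 \max\{\|A_1\|, \|A_2\|\},
\]
where $\|A\|$ is the operator norm. The left inequality is immediate since $A_1 = A e_1$ and $A_2 = A e_2$ are images of unit vectors; the right inequality follows by writing an arbitrary unit vector $u = u_1 e_1 + u_2 e_2$, so $\|Au\| \leq |u_1|\|A_1\| + |u_2|\|A_2\| \leq (|u_1| + |u_2|) \max\{\|A_1\|,\|A_2\|\} \leq \sqrt 2 \max\{\|A_1\|, \|A_2\|\}$ by Cauchy--Schwarz.

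To prove \eqref{eq:second-derivative-norm}, I would note that in normal coordinates based at $\xi_0$, the second derivative $D^2\Phi_{\xi_0}$ is a symmetric bilinear map $T_{\xi_0}M \times T_{\xi_0}M \to T_{\xi_0}M$ (identifying tangent spaces via the coordinate chart), and its norm as such agrees, up to the standard conventions, with the operator norm of the linear map sending $v$ to the matrix $[D^2\Phi_{\xi_0}(v,\cdot)]$, or equivalently can be organized by partial derivatives: the ``columns'' of $D^2\Phi_{\xi_0}$ in the first slot are precisely $\partial_x(D\Phi_{\xi_0})$ and $\partial_y(D\Phi_{\xi_0})$, viewed as linear maps / matrices. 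Applying the matrix fact above with the role of the ``matrix'' played by $v \mapsto D^2\Phi_{\xi_0}(v,\cdot)$ and the role of the ``columns'' played by $D^2\Phi_{\xi_0}(e_1,\cdot) = \partial_x(D\Phi_{\xi_0})$ and $D^2\Phi_{\xi_0}(e_2,\cdot) = \partial_y(D\Phi_{\xi_0})$ yields \eqref{eq:second-derivative-norm} directly, once one fixes the convention that $\|D^2\Phi_{\xi_0}\|$ denotes exactly this operator norm.

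For \eqref{eq:ex-platypus}, I would fix the tangent vector $v$ and consider the single matrix $N_v := [D^2\Phi_{\xi_0}(v,\cdot)]$, i.e.\ the linear map $w \mapsto D^2\Phi_{\xi_0}(v,w)$. Its columns are $D^2\Phi_{\xi_0}(v, e_\varsigma) = [\partial_\varsigma(D\Phi_{\xi_0})]\,v$ for $\varsigma = x,y$, using symmetry of the Hessian and linearity in the second slot. Then $\|D^2\Phi_{\xi_0}(v,\cdot)\| = \|N_v\|$, and applying the same matrix fact to $N_v$, whose columns are exactly $[\partial_x(D\Phi_{\xi_0})]v$ and $[\partial_y(D\Phi_{\xi_0})]v$, gives \eqref{eq:ex-platypus}.

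The computations here are entirely routine; the only genuine ``obstacle'' is bookkeeping about conventions --- making precise in what sense $\|D^2\Phi_{\xi_0}\|$ is being measured (as a bilinear form, or as a family of linear maps indexed by the first argument), and confirming that ``normal coordinates based at $\xi_0$'' let us identify $T_{\xi_0}M$ with $\mathbb R^2$ orthonormally so that the coordinate partial derivatives $\partial_x, \partial_y$ correspond to an orthonormal basis and the matrix-column argument applies verbatim. Once these identifications are pinned down, both inequalities are immediate consequences of the one-line matrix estimate, applied twice.
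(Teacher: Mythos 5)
Your proof is correct and follows essentially the same route as the paper: the paper also reduces both inequalities to the column-norm estimate $\max_k\|\mathbf a_k\|\le\|A\|\le\sqrt n\max_k\|\mathbf a_k\|$ (Sublemma \ref{lem:matrix-norm-bound}, proved there by Lagrange multipliers rather than Cauchy--Schwarz) together with the identification $D^2\Phi_{\xi_0}(v,\partial_{x^k})=[\partial_{x^k}(D\Phi_{\xi_0})]v$ (Lemma \ref{lem:D2V}), applied once with $v$ ranging over unit vectors and once with $v$ fixed. The only difference is that the paper works in arbitrary dimension $n$ via Proposition \ref{lem:V-into-second-deriv-approx}, while you specialize to $n=2$; the substance is identical.
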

  Indeed, letting \( v= e^{(1)}\) and substituting the second inequality of   \eqref{eq:ex-platypus} into \eqref{eq:main-phase} we get the required estimate.

We will prove a more general version of Proposition \ref{lem:ex-platypus} in higher dimensions. Specifically, let $\Phi : M \to M$ be a $C^{2}$ map of a Riemannian $n$-manifold $M$, and let $(x^1, \ldots, x^n)$ be Riemannian normal coordiantes at $\xi_0$. For each $k = 1, \ldots, n$, we define the second-order partial derivative with respect to $x^{k}$ to be the linear map $\partial_{x^{k}}(D\Phi_{\xi_{0}}) : T_{\xi_{0}} M \to T_{\Phi(\xi_{0})}M$ given by:
\begin{equation}\label{eq:partial-dphi-coords}
\partial_{x^k}(D\Phi_{\xi_{0}}) = \begin{pmatrix}
    \displaystyle \frac{\partial^2 \Phi_1}{\partial x^1 \partial x^k}(\xi_{0}) & \cdots & \displaystyle \frac{\partial^2 \Phi_1}{\partial x^n \partial x^k}(\xi_{0}) \\
    \vdots & & \vdots \\
    \displaystyle \frac{\partial^2 \Phi_n}{\partial x^1 \partial x^k}(\xi_{0}) & \cdots & \displaystyle \frac{\partial^2 \Phi_n}{\partial x^n \partial x^k} (\xi_{0})
\end{pmatrix}.
\end{equation}

\begin{proposition}\label{lem:V-into-second-deriv-approx}
    For all $v \in T_{\xi_{0}}M$, in normal coordiantes at $\xi_0$,
 \begin{equation}\label{eq:D2-Phi-v-approx}
    \max_{1 \leq k \leq n} \| [\partial_{x^k}(D\Phi_{\xi_{0}})] v\| \leq \|D^2\Phi_{\xi_{0}}(v,\cdot)\| \leq \sqrt n \max_{1 \leq k \leq n} \|[\partial_{x^k}(D\Phi_{\xi_{0}})] v\|.
    \end{equation}
In particular, 
    \begin{equation}\label{eq:D^2-Phi-approx}
    \max_{1 \leq k \leq n} \|\partial_{x^k}(D\Phi_{\xi_{0}})\| \leq \|D^2\Phi_{\xi_{0}}\| \leq \sqrt n \max_{1 \leq k \leq n} \|\partial_{x^k}(D\Phi_{\xi_{0}})\|.
\end{equation}
\end{proposition}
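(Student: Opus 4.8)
The plan is to treat this as pure linear algebra, once we record what normal coordinates buy us at the base point. In Riemannian normal coordinates at $\xi_0$ the coordinate frame $\{\partial_{x^1},\dots,\partial_{x^n}\}$ is orthonormal for the metric on $T_{\xi_0}M$, and the intrinsic second derivative $D^2\Phi_{\xi_0}$ is represented in these coordinates by the ordinary second partial derivatives of the components of $\Phi$ (the correction terms involving Christoffel symbols of the domain vanish at the centre of the chart). Hence, for $v=\sum_{k}v^{k}\partial_{x^{k}}\in T_{\xi_0}M$, the linear map $w\mapsto D^2\Phi_{\xi_0}(v,w)$ is represented by the matrix $\sum_{k=1}^{n}v^{k}\,\partial_{x^{k}}(D\Phi_{\xi_0})$, with $\partial_{x^{k}}(D\Phi_{\xi_0})$ as in \eqref{eq:partial-dphi-coords}; symmetry of $D^2\Phi_{\xi_0}$ is exactly what guarantees that the same matrices appear whether one freezes the first or the second argument, so that $D^2\Phi_{\xi_0}(v,\partial_{x^{k}})=[\partial_{x^{k}}(D\Phi_{\xi_0})]v$.

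With this in hand, I would prove \eqref{eq:D2-Phi-v-approx} in two short steps. For the lower bound, evaluate the operator $w\mapsto D^2\Phi_{\xi_0}(v,w)$ on the unit vectors $w=\partial_{x^{k}}$: this gives $\|D^2\Phi_{\xi_0}(v,\cdot)\|\geq\|D^2\Phi_{\xi_0}(v,\partial_{x^{k}})\|=\|[\partial_{x^{k}}(D\Phi_{\xi_0})]v\|$ for every $k$, and we take the maximum over $k$. For the upper bound, write an arbitrary unit vector as $w=\sum_{k}w^{k}\partial_{x^{k}}$ with $\sum_{k}(w^{k})^{2}=1$, expand
\[
D^2\Phi_{\xi_0}(v,w)=\sum_{k=1}^{n}w^{k}\,[\partial_{x^{k}}(D\Phi_{\xi_0})]v,
\]
and apply the triangle inequality together with Cauchy--Schwarz in the form $\sum_{k}|w^{k}|\leq\sqrt{n}\,(\sum_{k}(w^{k})^{2})^{1/2}=\sqrt{n}$, obtaining $\|D^2\Phi_{\xi_0}(v,w)\|\leq\sqrt{n}\max_{k}\|[\partial_{x^{k}}(D\Phi_{\xi_0})]v\|$; taking the supremum over unit $w$ completes \eqref{eq:D2-Phi-v-approx}.

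Finally, \eqref{eq:D^2-Phi-approx} follows by taking the supremum over unit $v$ in all three terms, with a little care since maximum and supremum do not commute in general. For the lower bound, for each fixed $k$ we have $\|\partial_{x^{k}}(D\Phi_{\xi_0})\|=\|D^2\Phi_{\xi_0}(\partial_{x^{k}},\cdot)\|\leq\sup_{\|v\|=1}\|D^2\Phi_{\xi_0}(v,\cdot)\|=\|D^2\Phi_{\xi_0}\|$, so the maximum over $k$ is also bounded by $\|D^2\Phi_{\xi_0}\|$. For the upper bound, when $\|v\|=1$ we have $\max_{k}\|[\partial_{x^{k}}(D\Phi_{\xi_0})]v\|\leq\max_{k}\|\partial_{x^{k}}(D\Phi_{\xi_0})\|$, so \eqref{eq:D2-Phi-v-approx} gives $\|D^2\Phi_{\xi_0}(v,\cdot)\|\leq\sqrt{n}\max_{k}\|\partial_{x^{k}}(D\Phi_{\xi_0})\|$ uniformly in $v$. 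Proposition \ref{lem:ex-platypus}, i.e.\ \eqref{eq:second-derivative-norm} and \eqref{eq:ex-platypus}, is then the case $n=2$ (where $\sqrt n=\sqrt2$), and substituting $v=e^{(1)}$ into the second inequality of \eqref{eq:ex-platypus} yields the bound used immediately after it. There is essentially no obstacle here beyond the bookkeeping identification of the first paragraph — that the matrices $\partial_{x^{k}}(D\Phi_{\xi_0})$ are the coordinate slices of $D^2\Phi_{\xi_0}$ and that the coordinate frame is orthonormal — after which the estimate is only the triangle inequality and Cauchy--Schwarz; the constant $\sqrt{n}$ is sharp, as one sees by considering maps for which $[\partial_{x^{k}}(D\Phi_{\xi_0})]v$ is the same nonzero vector for all $k$.
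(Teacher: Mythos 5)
Your proof is correct and follows essentially the same route as the paper: it identifies $D^2\Phi_{\xi_0}(v,\partial_{x^k})$ with $[\partial_{x^k}(D\Phi_{\xi_0})]v$ (the paper's Lemma \ref{lem:D2V}) and then bounds the operator norm of a linear map above and below by the norms of its images of an orthonormal basis with constant $\sqrt n$ (the paper's Sublemma \ref{lem:matrix-norm-bound} and Lemma \ref{lem:bilinear}). The only cosmetic difference is that you obtain $\sum_k|w^k|\leq\sqrt n$ by Cauchy--Schwarz where the paper invokes Lagrange multipliers; both are fine, and yours is arguably cleaner.
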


This clearly implies Proposition \ref{lem:ex-platypus} when $n=2$.

To prove Proposition
\ref{lem:V-into-second-deriv-approx} we will prove a series of lemmas, most of which are linear-algebraic observations. Notice the middle term $D^2\Phi_{\xi_0}$ is in fact a bilinear map $T_{\xi_0}M \times T_{\xi_0}M \to T_{\Phi(\xi_0)}M$. We begin by introducing notation to study the coordinates of $D^2\Phi_{\xi_0}$ and show that the operator $\partial_{x^k} (D\Phi_{\xi_0})$ in \eqref{eq:partial-dphi-coords} in fact the \emph{monolinear} map $D^2\Phi_{\xi_0}(\cdot, \partial_{x^k})$.

Suppose $b : \mathbb R^n \times \mathbb R^n \to \mathbb R$ is a bilinear form. Given a basis $\mathcal B = \{\mathbf u_1, \ldots, \mathbf u_n\}$ of $\mathbb R^n$, let $\mathcal B^* = \{\omega^1, \ldots, \omega^n\}$ be the corresponding cobasis of linear functionals on $\mathbb R^n$, defined by $\omega^i(\mathbf u_j) = \delta_{ij}$ for all $1 \leq i,j \leq n$. Then the bilinear form $b$ can be written as
\[
b = \sum_{1 \leq i,j \leq k} b_{ij} \omega^i \otimes \omega^j,
\]
for some real coefficients $b_{ij} \in \mathbb R$. Consider now a bilinear map $B : \mathbb R^n \times \mathbb R^n \to \mathbb R^n$. In any basis $\mathcal B = (\mathbf u_1, \ldots, \mathbf u_n)$ of $\mathbb R^n$, writing $B = (B^1, \ldots, B^n)$, each $B^k$ is a bilinear form. Thus, again letting $\mathcal B^* = (\omega^1, \ldots, \omega^n)$ be the cobasis of $\mathcal B$, we can express $B$ as: 
\begin{equation}\label{eq:bilinear-matrix}
B = \begin{pmatrix}
    B^1 \\ \vdots \\ B^n
\end{pmatrix} = \begin{pmatrix}
    \sum_{i,j} B_{ij}^1 \omega^i \otimes \omega^j \\ \vdots \\
    \sum_{i,j} B_{ij}^n \omega^i \otimes \omega^j
\end{pmatrix}.
\end{equation}
For a particular $\mathbf v \in \mathbb R^n$, the map $B(\mathbf v, \cdot)$ given by $\mathbf x \mapsto B(\mathbf v, \mathbf x)$ is a linear transformation, and thus has a matrix representation in terms of any basis.  Referring to \eqref{eq:bilinear-matrix} and writing a vector $\mathbf v$ in coordinates as $\mathbf v = (v^1, \ldots, v^n)$, one sees that:
\[
B(\mathbf v, \cdot) = \begin{pmatrix}
    B^1(\mathbf v, \cdot) \\ \vdots \\ B^n(\mathbf v, \cdot)
\end{pmatrix}
= \begin{pmatrix}
    \sum_{i,j} B^1_{ij} v^i \omega^j \\ \vdots \\ \sum_{i,j}B^n_{ij} v^i \omega j
\end{pmatrix},
\]
and after expanding this ``covector form'' of $B(\mathbf v, \cdot)$, one sees that the matrix form of $B(\mathbf v, \cdot)$ is:
\begin{equation}\label{eq:vector-into-bilinear}
B(\mathbf v, \cdot) = \begin{pmatrix}
    \sum_{i=1}^n B^1_{i,1} v^i & \cdots & \sum_{i=1}^n B^1_{i,n} v^i \\
    \vdots & & \vdots \\
    \sum_{i=1}^n B^n_{i,1} v^i & \cdots & \sum_{i=1}^n B^n_{i,n} v^i
\end{pmatrix}.
\end{equation}
We will use this covector form to study $D^2 \Phi$.

\begin{lemma}\label{lem:D2V}
Let $(x^1, \ldots, x^n)$ be coordinates at $\xi_0$, and let $V = V^{1} \partial_{x^{1}} + \cdots + V^{n} \partial_{x^{n}}$, $V^{i} \in \mathbb R$, be a tangent vector at $\xi_{0}$. Then:
\begin{equation}\label{eq:D2V}
D^2\Phi(V,\cdot) = \begin{pmatrix}
    \displaystyle \sum_{i=1}^n \frac{\partial^2 \Phi_1}{\partial x^i \partial x^1} V^i & \cdots & \displaystyle \sum_{i=1}^n \frac{\partial^2 \Phi_1}{\partial x^i \partial x^n} V^i \\
    \vdots & & \vdots \\
    \displaystyle \sum_{i=1}^n \frac{\partial^2 \Phi_n}{\partial x^i \partial x^1} V^i & \cdots & \displaystyle \sum_{i=1}^n \frac{\partial^2 \Phi_n}{\partial x^i \partial x^n} V^i
\end{pmatrix}.
\end{equation}
In particular, for any of the coordinate vectors $\partial_{x^{k}}$:
\begin{equation}\label{eq:column-second-deriv-coords}
D^2\Phi(V, \partial_{x^k}) = \begin{pmatrix} \displaystyle \sum_{i=1}^n \frac{\partial^2 \Phi_1}{\partial x^i \partial x^k} V^i \\ \vdots \\ \displaystyle \sum_{i=1}^n \frac{\partial^2 \Phi_n}{\partial x^i \partial x^k} V^i 
\end{pmatrix} = \begin{pmatrix}
    \displaystyle \frac{\partial^2 \Phi_1}{\partial x^1 \partial x^k} & \cdots & \displaystyle \frac{\partial^2 \Phi_1}{\partial x^n \partial x^k} \\
    \vdots & & \vdots \\
    \displaystyle \frac{\partial^2 \Phi_n}{\partial x^1 \partial x^k} & \cdots & \displaystyle \frac{\partial^2 \Phi_n}{\partial x^n \partial x^k} 
\end{pmatrix} \begin{pmatrix}
    V^1 \\ \vdots \\ V^n
\end{pmatrix} = (\partial_{x^k} D\Phi)V.
\end{equation}
\end{lemma}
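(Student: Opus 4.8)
The plan is to specialize the general covector formalism for bilinear maps developed in \eqref{eq:bilinear-matrix}--\eqref{eq:vector-into-bilinear} to the particular bilinear map $B = D^2\Phi_{\xi_0} : T_{\xi_0}M \times T_{\xi_0}M \to T_{\Phi(\xi_0)}M$. Working in the chosen coordinate chart $(x^1,\dots,x^n)$ at $\xi_0$, I would write $\Phi = (\Phi_1,\dots,\Phi_n)$ componentwise, so that $D^2\Phi_{\xi_0} = (B^1,\dots,B^n)$ with each component $B^p = D^2(\Phi_p)_{\xi_0}$ a symmetric bilinear form on $T_{\xi_0}M$. With respect to the coordinate basis $\{\partial_{x^1},\dots,\partial_{x^n}\}$ and its cobasis $\{dx^1,\dots,dx^n\}$, the coefficients of $B^p$ are exactly the Hessian entries of the scalar function $\Phi_p$, namely $B^p_{ij} = \frac{\partial^2 \Phi_p}{\partial x^i\,\partial x^j}(\xi_0)$. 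This identification is the one place where a word of care is warranted: it is legitimate precisely because everything is read off in a single fixed chart, so the ``second derivative of a map'' is just the componentwise second partial derivative and no choice of connection enters.

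With that input, formula \eqref{eq:D2V} is immediate from \eqref{eq:vector-into-bilinear}. Substituting $B^p_{ij} = \partial^2\Phi_p/\partial x^i\partial x^j$ and $v^i = V^i$ there, the $(p,q)$ entry of the matrix representing the linear map $D^2\Phi(V,\cdot)$ is $\sum_{i=1}^n B^p_{iq}V^i = \sum_{i=1}^n \frac{\partial^2\Phi_p}{\partial x^i\,\partial x^q}\,V^i$, which is precisely the $(p,q)$ entry displayed on the right-hand side of \eqref{eq:D2V}. The only subtlety is bookkeeping: one must keep track of which index of $B^p_{ij}$ is contracted against $V$ and which labels the column of the resulting matrix, and this is exactly what \eqref{eq:vector-into-bilinear} prescribes.

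For the ``in particular'' statement \eqref{eq:column-second-deriv-coords} I would use the elementary fact that evaluating a linear transformation at the $k$-th coordinate basis vector $\partial_{x^k}$ returns the $k$-th column of its matrix. Applying this to $D^2\Phi(V,\cdot)$ and reading off the $k$-th column of \eqref{eq:D2V} yields the column vector whose $p$-th entry is $\sum_{i=1}^n \frac{\partial^2\Phi_p}{\partial x^i\,\partial x^k}\,V^i$. On the other hand, by the definition \eqref{eq:partial-dphi-coords} of $\partial_{x^k}(D\Phi_{\xi_0})$, the row-$p$, column-$i$ entry of that matrix is $\frac{\partial^2\Phi_p}{\partial x^i\,\partial x^k}(\xi_0)$, so the matrix--vector product $(\partial_{x^k}D\Phi)V$ has $p$-th entry $\sum_{i=1}^n \frac{\partial^2\Phi_p}{\partial x^i\,\partial x^k}\,V^i$ as well. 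The two column vectors coincide, which is exactly \eqref{eq:column-second-deriv-coords}.

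Overall there is no genuine obstacle here: the lemma is a matter of matching conventions between the intrinsic bilinear object $D^2\Phi_{\xi_0}$ and the coordinate Hessian matrices, then plugging into an already-established formula. If anything deserves emphasis in the write-up, it is the coordinate-dependence remark above and the consistent index placement — everything else is routine substitution.
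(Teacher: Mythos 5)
Your proposal is correct and follows essentially the same route as the paper: express $D^2\Phi_{\xi_0}$ componentwise as a column of bilinear forms whose coefficients are the second partials, substitute into the general formula \eqref{eq:vector-into-bilinear}, and then read off the $k$-th column upon evaluating at $\partial_{x^k}$. Your explicit remark that identifying the coefficients with the coordinate Hessian requires no connection because everything is read in a single fixed chart is a nice clarification that the paper leaves implicit, but it does not change the argument.
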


\begin{proof}
Given a coordinate system $(x^1, \ldots, x^n)$ of $M$, consider the coordinate tangent frame $(\partial_{x^1}, \ldots, \partial_{x^n})$ and coordinate coframe $(dx^1, \ldots, dx^n)$ of differential forms, and write $\Phi$ in coordinates as $\Phi = (\Phi_1, \ldots, \Phi_n)$. At each $\xi \in M$, the second derivative of $\Phi$ is a bilinear map $(T_\xi M) \times (T_\xi M) \to T_{\Phi(\xi)} M$. Expressing $D^2\Phi$ in terms of this coordinate system as in \eqref{eq:bilinear-matrix}, we have: 
\begin{equation}\label{eq:second-deriv-coords}
D^2\Phi = \begin{pmatrix} \displaystyle
    \sum_{i,j} \frac{\partial^2 \Phi_1}{\partial x^i \partial x^j} dx^i \otimes dx^j \\ \vdots \\ 
     \displaystyle\sum_{i,j} \frac{\partial^2 \Phi_n}{\partial x^i \partial x^j} dx^i \otimes dx^j
\end{pmatrix}.
\end{equation}
Consider now a vector field $V$ on $M$ (for example $V$ could be the time-$1$ stable direction $e^{(1)}$ for a set of time-$1$ hyperbolic coordinates $\{e^{(1)}, f^{(1)}\}$, as in Definition \ref{def:hypcoords}). In a unit coordinate system, write $V = V^1 \partial_{x^1} + \cdots + V^n \partial_{x^n}$, $V^k : M \to \mathbb R$ smooth functions. Putting $V$ into one of the arguments in \eqref{eq:second-deriv-coords} and expressing $D^2\Phi(V,\cdot)$ as in \eqref{eq:vector-into-bilinear}, we note: 
\begin{equation}\label{eq:vector-into-second-derivative}
D^2\Phi(V,\cdot) = \begin{pmatrix}
    \displaystyle \sum_{i=1}^n \frac{\partial^2 \Phi_1}{\partial x^i \partial x^1} V^i & \cdots & \displaystyle \sum_{i=1}^n \frac{\partial^2 \Phi_1}{\partial x^i \partial x^n} V^i \\
    \vdots & & \vdots \\
    \displaystyle \sum_{i=1}^n \frac{\partial^2 \Phi_n}{\partial x^i \partial x^1} V^i & \cdots & \displaystyle \sum_{i=1}^n \frac{\partial^2 \Phi_n}{\partial x^i \partial x^n} V^i
\end{pmatrix}.
\end{equation}
Putting a coordinate tangent vector $\partial_{x^k}$ into $D^2\Phi(V,\cdot)$ gives us: 
\begin{equation}\label{eq:column-second-deriv-coords-x}
D^2\Phi(V, \partial_{x^k}) = \begin{pmatrix} \displaystyle \sum_{i=1}^n \frac{\partial^2 \Phi_1}{\partial x^i \partial x^k} V^i \\ \vdots \\ \displaystyle \sum_{i=1}^n \frac{\partial^2 \Phi_n}{\partial x^i \partial x^k} V^i 
\end{pmatrix} = \begin{pmatrix}
    \displaystyle \frac{\partial^2 \Phi_1}{\partial x^1 \partial x^k} & \cdots & \displaystyle \frac{\partial^2 \Phi_1}{\partial x^n \partial x^k} \\
    \vdots & & \vdots \\
    \displaystyle \frac{\partial^2 \Phi_n}{\partial x^1 \partial x^k} & \cdots & \displaystyle \frac{\partial^2 \Phi_n}{\partial x^n \partial x^k} 
\end{pmatrix} \begin{pmatrix}
    V^1 \\ \vdots \\ V^n
\end{pmatrix} = (\partial_{x^k} D\Phi)V,
\end{equation}
where $\partial_{x^k} D\Phi = \partial_{x^k}(D\Phi_\xi) : T_\xi M \to T_{\Phi(\xi)}M$ (at each $\xi \in M$ where these coordinates are defined) is the linear transformation given by 
\begin{equation}\label{eq:partial-dphi-coords-x}
\partial_{x^k}(D\Phi_\xi) = \begin{pmatrix}
    \displaystyle \frac{\partial^2 \Phi_1}{\partial x^1 \partial x^k}(\xi) & \cdots & \displaystyle \frac{\partial^2 \Phi_1}{\partial x^n \partial x^k}(\xi) \\
    \vdots & & \vdots \\
    \displaystyle \frac{\partial^2 \Phi_n}{\partial x^1 \partial x^k}(\xi) & \cdots & \displaystyle \frac{\partial^2 \Phi_n}{\partial x^n \partial x^k} (\xi)
\end{pmatrix}.
\end{equation}
\end{proof}

In order to bound $\|D^2\Phi_{\xi_0}\|$ in terms of the derivatives $\partial_{x^k}(D\Phi_{\xi_0})$, we will compare $D^2\Phi_{\xi_0}$ to the monolinear map $D\Phi_{\xi_0}(\cdot, \partial_{x^k})$ (which is equal to $\partial_{x^k}(D\Phi_{\xi_0})$ by \eqref{eq:column-second-deriv-coords}). We first show how to estimate the norm of a general bilinear map $B : \mathbb R^n \times \mathbb R^n \to \mathbb R^n$ in terms of the norms of the maps $B(\mathbf v, \cdot)$.

\begin{lemma}\label{lem:bilinear}
    If $B : \mathbb R^n \times \mathbb R^n \to \mathbb R^n$ is a bilinear map, and $\mathcal B = (\mathbf u_1, \ldots,\mathbf u_n)$ is an orthonormal basis of $\mathbb R^n$, then for any $\mathbf v \in \mathbb R^n$:
    \begin{equation}\label{eq:bilinear-approx-1}
        \max_{1 \leq k \leq n} \|B(\mathbf v, \mathbf u_k)\| \leq \|B(\mathbf v, \cdot) \| \leq \sqrt n \max_{1 \leq k \leq n}\|B(\mathbf v, \mathbf u_k)\|
    \end{equation}
    Moreover: 
    \begin{equation}\label{eq:bilinear-approx-2}
        \max_{1 \leq k \leq n} \|B(\cdot, \mathbf u_k)\| \leq \|B\| \leq \sqrt n \max_{1 \leq k \leq n}\|B(\cdot, \mathbf u_k)\|
    \end{equation}
\end{lemma}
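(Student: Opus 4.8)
The plan is to first establish the monolinear estimate \eqref{eq:bilinear-approx-1} for a fixed $\mathbf v$, and then deduce the full bilinear estimate \eqref{eq:bilinear-approx-2} by taking a supremum over unit vectors $\mathbf v$. For the lower bound in \eqref{eq:bilinear-approx-1}, I would simply observe that $B(\mathbf v,\cdot):\mathbb R^n\to\mathbb R^n$ is a linear map and each $\mathbf u_k$ is a unit vector, so $\|B(\mathbf v,\mathbf u_k)\|\leq\|B(\mathbf v,\cdot)\|$ directly from the definition of the operator norm; taking the maximum over $k$ gives $\max_k\|B(\mathbf v,\mathbf u_k)\|\leq\|B(\mathbf v,\cdot)\|$.

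For the upper bound in \eqref{eq:bilinear-approx-1}, take an arbitrary unit vector $\mathbf x$ and expand it in the orthonormal basis as $\mathbf x=\sum_{k=1}^n x_k\mathbf u_k$, so that $\sum_k x_k^2=1$. By bilinearity $B(\mathbf v,\mathbf x)=\sum_k x_k\,B(\mathbf v,\mathbf u_k)$, hence by the triangle inequality followed by Cauchy--Schwarz,
\[
\|B(\mathbf v,\mathbf x)\|\leq\sum_{k=1}^n|x_k|\,\|B(\mathbf v,\mathbf u_k)\|\leq\Big(\sum_{k=1}^n|x_k|\Big)\max_{1\leq k\leq n}\|B(\mathbf v,\mathbf u_k)\|\leq\sqrt n\,\max_{1\leq k\leq n}\|B(\mathbf v,\mathbf u_k)\|.
\]
Taking the supremum over all unit $\mathbf x$ gives the claimed bound on $\|B(\mathbf v,\cdot)\|$, completing \eqref{eq:bilinear-approx-1}.

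For \eqref{eq:bilinear-approx-2} I would use the convention $\|B\|=\sup_{\|\mathbf v\|=1}\|B(\mathbf v,\cdot)\|$ (the same one under which $D^2\Phi_{\xi_0}$ is normed in \eqref{eq:second-derivative-norm}), and likewise $\|B(\cdot,\mathbf u_k)\|=\sup_{\|\mathbf v\|=1}\|B(\mathbf v,\mathbf u_k)\|$. The lower bound is immediate: for each fixed $k$ and every unit $\mathbf v$ we have $\|B(\mathbf v,\mathbf u_k)\|\leq\|B(\mathbf v,\cdot)\|\leq\|B\|$, so taking the supremum over $\mathbf v$ and then the maximum over $k$ yields $\max_k\|B(\cdot,\mathbf u_k)\|\leq\|B\|$. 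For the upper bound, fix any unit $\mathbf v$; \eqref{eq:bilinear-approx-1} gives $\|B(\mathbf v,\cdot)\|\leq\sqrt n\max_k\|B(\mathbf v,\mathbf u_k)\|$, and since $\|B(\mathbf v,\mathbf u_k)\|\leq\|B(\cdot,\mathbf u_k)\|$ for the unit vector $\mathbf v$, this is at most $\sqrt n\max_k\|B(\cdot,\mathbf u_k)\|$; taking the supremum over unit $\mathbf v$ gives $\|B\|\leq\sqrt n\max_k\|B(\cdot,\mathbf u_k)\|$.

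The argument is purely elementary linear algebra, so there is no genuine obstacle; the only points that need care are fixing the normalization convention for the bilinear norm consistently with the rest of the paper, and noting that the dimensional factor $\sqrt n$ --- which is precisely what is inherited by \eqref{eq:D2-Phi-v-approx} and \eqref{eq:D^2-Phi-approx}, and ultimately by the constants in Theorem~\ref{thm:main-phase} when $n=2$ --- arises solely from the Cauchy--Schwarz step $\sum_k|x_k|\leq\sqrt n\,(\sum_k x_k^2)^{1/2}$ and is attained when all the $\|B(\mathbf v,\mathbf u_k)\|$ are equal.
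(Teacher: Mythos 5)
Your proof is correct and follows essentially the same route as the paper: both establish the columnwise bound for the linear map $B(\mathbf v,\cdot)$ first and then pass to \eqref{eq:bilinear-approx-2} by taking the supremum over unit $\mathbf v$. The only difference is that you justify the elementary inequality $\sum_k|x_k|\leq\sqrt n$ on the unit sphere via Cauchy--Schwarz, whereas the paper invokes Lagrange multipliers for the same fact.
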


To prove Lemma \ref{lem:bilinear} we first prove a simple statement about linear maps. 

\begin{sublemma}\label{lem:matrix-norm-bound}
Let $A$ be an $m \times n$ matrix with real or complex entries, whose columns are $\mathbf a_1, \ldots, \mathbf a_n$ in terms of an orthonormal basis $\mathcal B$. Then we have: 
\[
\max_{1 \leq k\leq n} \|\mathbf a_k\| \leq \|A\| \leq \sqrt n \max_{1 \leq k \leq n} \|\mathbf a_k\|.
\]
\end{sublemma}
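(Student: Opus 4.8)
The plan is to bound the operator norm $\|A\| = \sup_{\|\mathbf x\| = 1} \|A\mathbf x\|$ from both sides using the columns $\mathbf a_1, \ldots, \mathbf a_n$ of $A$ expressed in the orthonormal basis $\mathcal B$. Since $\mathcal B$ is orthonormal, we may identify $\mathbf x \in \mathbb R^n$ (or $\mathbb C^n$) with its coordinate vector $(x^1, \ldots, x^n)$ with respect to $\mathcal B$ and write $A\mathbf x = \sum_{k=1}^n x^k \mathbf a_k$ without distortion of norms.

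For the lower bound, I would simply test $A$ on each basis vector $\mathbf u_k$: since $\|\mathbf u_k\| = 1$ and $A\mathbf u_k = \mathbf a_k$, we get $\|\mathbf a_k\| = \|A\mathbf u_k\| \leq \|A\|$ for each $k$, hence $\max_k \|\mathbf a_k\| \leq \|A\|$. For the upper bound, let $\mathbf x$ be a unit vector with coordinates $(x^1, \ldots, x^n)$, so $\sum_k |x^k|^2 = 1$. Then by the triangle inequality and Cauchy--Schwarz,
\[
\|A\mathbf x\| = \Bigl\| \sum_{k=1}^n x^k \mathbf a_k \Bigr\| \leq \sum_{k=1}^n |x^k|\, \|\mathbf a_k\| \leq \Bigl(\sum_{k=1}^n |x^k|^2\Bigr)^{1/2} \Bigl(\sum_{k=1}^n \|\mathbf a_k\|^2\Bigr)^{1/2} \leq \sqrt n \max_{1 \leq k \leq n} \|\mathbf a_k\|,
\]
where the last step uses $\sum_k \|\mathbf a_k\|^2 \leq n \max_k \|\mathbf a_k\|^2$. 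Taking the supremum over unit $\mathbf x$ gives $\|A\| \leq \sqrt n \max_k \|\mathbf a_k\|$.

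This sublemma is elementary and I do not anticipate any genuine obstacle; the only point requiring mild care is ensuring that passing to coordinates with respect to the orthonormal basis $\mathcal B$ preserves norms (both on the domain, so that $\|\mathbf u_k\| = 1$ and $\sum |x^k|^2 = \|\mathbf x\|^2$, and in the expansion $A\mathbf x = \sum x^k \mathbf a_k$), which is exactly why orthonormality of $\mathcal B$ is assumed rather than an arbitrary basis. The same two-sided estimate will then feed into Lemma \ref{lem:bilinear} by applying it to the matrix of $B(\mathbf v, \cdot)$, whose $k$-th column is $B(\mathbf v, \mathbf u_k)$.
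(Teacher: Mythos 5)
Your proof is correct and follows essentially the same route as the paper: both establish the lower bound by testing $A$ on the basis vectors and the upper bound by expanding $A\mathbf x = \sum_k x^k \mathbf a_k$ and applying the triangle inequality. The only (cosmetic) difference is that you extract the factor $\sqrt n$ via Cauchy--Schwarz applied to $\sum_k |x^k|\,\|\mathbf a_k\|$, whereas the paper first pulls out $\max_k\|\mathbf a_k\|$ and then maximizes $\sum_k |v_k|$ on the unit sphere by Lagrange multipliers; your step is slightly more elementary but yields the identical bound.
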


\begin{proof}
 Let $\mathcal B = \{\mathbf u_1, \ldots, \mathbf u_n\}$ be an orthonormal basis of $\mathbb R^n$. With respect to this basis, write $A = \begin{pmatrix}
    \mathbf a_1 & \cdots & \mathbf a_n
\end{pmatrix}$, with $\mathbf a_k$ the $k$th column of $A$. Then $A\mathbf u_k = \mathbf a_k$ for all $k$. In particular, $\|A\| \geq \|\mathbf a_k\|$ for all $k$, giving us the left hand side of the statement in the lemma. 
Meanwhile, writing $\mathbf v = v_1 \mathbf u_1 + \cdots + v_n \mathbf u_n$, we have: 
\[
\|A\| = \sup_{\|\mathbf v\| = 1} \|A\mathbf v\| \leq \sup_{\|\mathbf v\| =1 } \sum_{k=1}^n \|\mathbf a_k v_k\| \leq \left( \max_{1 \leq k \leq n} \|\mathbf a_k\|\right) \sup_{\|\mathbf v\| = 1} \left( |v_1| + \cdots + |v_n|\right).
\]
One can use Lagrange multipliers to show that the function $f(v_1, \ldots, v_n) = v_1 + \cdots + v_n$ restricted to $v_1^2 + \cdots + v_n^2 = 1$, $v_k \geq 1$, has maximal value $\sqrt n$. This gives us the second inequality of the lemma.
\end{proof}

\begin{proof}[Proof of Lemma \ref{lem:bilinear}]
    Applying Sublemma \ref{lem:matrix-norm-bound} to the linear map $B(\mathbf v, \cdot)$, we obtain \eqref{eq:bilinear-approx-1} immediately. Next, noting $\|B\| = \sup_{\|\mathbf x\| = \|\mathbf y \| = 1} \|B(\mathbf x, \mathbf y)\|$ for a bilinear map $B : \mathbb R^n \times \mathbb R^n \to \mathbb R^n$, we clearly have the first inequality in \eqref{eq:bilinear-approx-2}. For the second inequality of \eqref{eq:bilinear-approx-2}, we use \eqref{eq:bilinear-approx-1} to conclude: 
    \[
    \|B\| = \max_{\|\mathbf v\| = 1} \|B(\mathbf v,\cdot)\| \leq \sqrt n \max_{\|\mathbf v\| = 1} \max_{1 \leq k \leq n} \|B(\mathbf v, \mathbf u_k)\| \leq \sqrt n \max_{1 \leq k \leq n} \|B(\cdot, \mathbf u_k)\|.
    \]
\end{proof}

\begin{proof}[Proof of Proposition \ref{lem:V-into-second-deriv-approx}]
Both \eqref{eq:D2-Phi-v-approx} and \eqref{eq:D^2-Phi-approx} follow from \eqref{eq:bilinear-approx-1} and \eqref{eq:bilinear-approx-2} applied to $D^2\Phi_{\xi_0}(v,\cdot)$ and $D^2\Phi_{\xi_0}$, respectively, noting $(\partial_{x^1}(\xi_0), \ldots,\partial_{x^n}(\xi_0))$ forms an orthonormal basis of $T_{\xi_0}M$ and that $D^2\Phi_{\xi_0}(v,\partial_{x^k}) = (\partial_{x^k} D\Phi_{\xi_0})v$ by \eqref{eq:column-second-deriv-coords}.
\end{proof}

\section{Slow Variation: A Priori Bounds}
\label{sec-general} 

We now begin the proof of the first and main bound in the statement of Theorem \ref{thm:main-phase}. In this section we establish \emph{a priori} bounds on $\|Df^{(k)}\|$ which just rely on the existence of hyperbolic coordinates. In Section~\ref{sec-slow}  we apply the properties of quasi-hyperbolic points to the a priori estimates to obtain the first inequality in the statement of Theorem \ref{thm:main-phase}.

 We emphasize  that the first inequality in Theorem \ref{thm:main-phase} is independent of the choice of coordinates. However, for the proof it is  natural to use an appropriate coordinate system. 
Thus for  the remainder of this section we fix once and for all a Riemannian  coordinate system $(x,y)$  based at $\xi_0$, at which the coordinate tangent vectors $\{\partial_x(\xi_0), \partial_y(\xi_0)\}$ form an orthonormal basis at $T_{\xi_0}M$, though we emphasize that\emph{ the constants $K_{1}$ and $K_{2}$ in Theorem \ref{thm:main-phase} will not depend on this choice of coordinates}. 
Then, for $\varsigma = x,y$, we have 
   \begin{equation}\label{eq:partial-Dphii}
    \partial_\varsigma \left(D\Phi_{\xi_0}\right)= \left( \begin{array}{cc}
\partial_{\varsigma x} \Phi_1(\xi_0) & \partial_{\varsigma y} \Phi_1(\xi_0) \\
\partial_{\varsigma x} \Phi_2(\xi_0) & \partial_{\varsigma y} \Phi_2(\xi_0) 
    \end{array}\right). 
    \end{equation}
The entries of the matrix \eqref{eq:partial-Dphii} are  just the second order partial derivatives of \( \Phi\) with respect to this coordinate system.   For simplicity, we use the following notation:
   \[
   A_{k}:=\frac{\sqrt 2\|f^{(k)}_k\|^2}{\|f^{(k)}_k\|^2 - \|e^{(k)}_k\|^2}\quand B_{k}:=\frac{\sqrt 2\|e^{(k)}_k\|^2}{\|f^{(k)}_k\|^2 - \|e^{(k)}_k\|^2}.
   \]
\[
\mathfrak E^{(k)}_{i}:=\frac{\|[\partial_{\varsigma}(D\Phi_{\xi_{i}})]e^{(k)}_{i}\|\|e^{(k)}_{i+1}\|}{|\det(D\Phi^{i+1}_{\xi_0})|}\quand \mathfrak F^{(k)}_{i}:=\frac{\|[\partial_{\varsigma}(D\Phi_{\xi_{i}})]f^{(k)}_{i}\|\|f^{(k)}_{i+1}\|}{|\det(D\Phi^{i+1}_{\xi_0})|}
\]
The following proposition  requires no assumptions except that the map \(  \Phi^{k}  \) is \(  C^{2}  \) at the  point \(  \xi_{0}  \) and that hyperbolic coordinates  \(  e^{(k)}, f^{(k)}  \) are defined, which we now assume for the rest of this section.

\begin{proposition}\label{prop:apriori}
For \(\varsigma=x,y\) we have
\begin{equation}\label{eq-apriori}
    \|Df^{(k)}\| \leq A_{k} 
\mathfrak E^{(k)}_{0}+A_{k}\sum_{i=1}^{k-1}\mathfrak E^{(k)}_{i}+
B_{k} \sum_{i=0}^{k-1}\mathfrak F^{(k)}_{i}.
\end{equation}

\end{proposition}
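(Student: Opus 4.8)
The plan is to differentiate the hyperbolic coordinate vector field $f^{(k)}$ with respect to the base point by exploiting the diagonalizing property \eqref{eq:hypcorest}: in coordinates $\mathcal H^{(k)}$ and $\mathcal H^{(k)}_k$ the map $D\Phi^k_{\xi_0}$ is diagonal with entries $\|f^{(k)}_k\|$ and $\|e^{(k)}_k\|$, and the fact that $e^{(k)}_k, f^{(k)}_k$ are \emph{orthogonal} is what pins down $f^{(k)}$ (equivalently $\theta$ in a parametrization of the unit circle). Concretely, I would write $f^{(k)} = (\sin\vartheta, \cos\vartheta)$ in the fixed $(x,y)$ coordinates, so $\|Df^{(k)}\| = \|D\vartheta\|$, and differentiate the orthogonality relation $\langle D\Phi^k_{\xi_0} e^{(k)}, D\Phi^k_{\xi_0} f^{(k)}\rangle = 0$ (equivalently, the critical-point equation $\frac{d}{d\theta}\|D\Phi^k(\sin\theta,\cos\theta)\|=0$ from \eqref{c4}, but now differentiating in the base point rather than in $\theta$). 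Implicit differentiation expresses $D\vartheta$ as a quotient whose denominator is $\|f^{(k)}_k\|^2 - \|e^{(k)}_k\|^2$ — precisely the quantity appearing in $A_k$ and $B_k$ — and whose numerator involves $\partial_\varsigma(D\Phi^k_{\xi_0})$ applied to $e^{(k)}$ and $f^{(k)}$, paired with the images $e^{(k)}_k, f^{(k)}_k$.

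The second and crucial ingredient is to decompose the high-iterate second derivative $\partial_\varsigma(D\Phi^k_{\xi_0})$ into a telescoping sum of one-step contributions using the chain rule $D\Phi^k = D\Phi_{\xi_{k-1}}\circ\cdots\circ D\Phi_{\xi_0}$. Differentiating this product with the Leibniz rule produces $k$ terms, the $i$-th of which has the shape ``$D\Phi$ from $\xi_{i+1}$ to $\xi_k$'' $\circ$ ``$\partial_\varsigma(D\Phi_{\xi_i})$'' $\circ$ ``$D\Phi^i$ from $\xi_0$ to $\xi_i$'', plus the correction terms coming from the dependence of the intermediate points $\xi_j$ on the base point. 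Applying this to the vectors $e^{(k)}$ and $f^{(k)}$ and tracking norms, each term contributes something controlled by $\|[\partial_\varsigma(D\Phi_{\xi_i})]e^{(k)}_i\|$ or $\|[\partial_\varsigma(D\Phi_{\xi_i})]f^{(k)}_i\|$ times the expansion of the remaining iterates; after dividing by the determinant $|\det D\Phi^k_{\xi_0}|$ (which is where the diagonal form lets us rewrite $\|f^{(k)}_k\|^{-1}$ and $\|e^{(k)}_k\|^{-1}$ cleanly), the expansion factor for the tail collapses to $\|e^{(k)}_{i+1}\|/|\det D\Phi^{i+1}_{\xi_0}|$ or $\|f^{(k)}_{i+1}\|/|\det D\Phi^{i+1}_{\xi_0}|$ — exactly the quantities $\mathfrak E^{(k)}_i$ and $\mathfrak F^{(k)}_i$. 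The $i=0$ term is singled out because there is no ``$D\Phi^i$ from $\xi_0$'' factor, which accounts for the separate $A_k\mathfrak E^{(k)}_0$ in the statement.

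The main obstacle I expect is bookkeeping the correction terms from the base-point dependence of the intermediate orbit points $\xi_j = \Phi^j(\xi_0)$: a naive Leibniz expansion of $D\Phi^k$ as a function of $\xi_0$ is not simply a sum of $k$ ``insert one $\partial_\varsigma(D\Phi_{\xi_i})$'' terms, because each $D\Phi_{\xi_j}$ depends on $\xi_0$ through $\xi_j$. One must show these extra pieces either reorganize into the same geometric-type sum (absorbing into the $\mathfrak E^{(k)}_i, \mathfrak F^{(k)}_i$ with the constants $A_k, B_k$) or are dominated by it. I would handle this by setting up the differentiation recursively in $k$ — relating $\partial_\varsigma(D\Phi^{k}_{\xi_0})$ to $\partial_\varsigma(D\Phi^{k-1}_{\xi_0})$ composed with $D\Phi_{\xi_{k-1}}$ plus a new one-step term — and then unwinding the recursion, which makes the combinatorics of the correction terms transparent. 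A secondary technical point is to confirm that the orthogonal-splitting identity $\|e^{(j)}_{j+1}\|^2 = \cos^2\theta\|e^{(j+1)}_{j+1}\|^2 + \sin^2\theta\|f^{(j+1)}_{j+1}\|^2$-style manipulations (already used in Lemma~\ref{lem:hyp-coord-cvgce-apriori}) give the stated numerators after projecting onto the orthonormal frame $\mathcal H^{(k)}_k$; this is routine linear algebra once the decomposition above is in place.
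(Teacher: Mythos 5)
Your plan follows the same route as the paper's proof: (i) reduce $\|Df^{(k)}\|$ to the single component $|\langle e^{(k)},\partial_\varsigma f^{(k)}\rangle|$ (your angle parametrization is exactly this, since $\partial_\varsigma f^{(k)}=\pm(\partial_\varsigma\vartheta)\,e^{(k)}$); (ii) implicit differentiation of the orthogonality relation, which is equivalent to the paper's differentiation of the eigenvector equation $\mathcal L^{(k)}f^{(k)}=\|f^{(k)}_k\|^2f^{(k)}$ for $\mathcal L^{(k)}=(D\Phi^k_{\xi_0})^*D\Phi^k_{\xi_0}$ and, via self-adjointness, produces precisely the denominator $\|f^{(k)}_k\|^2-\|e^{(k)}_k\|^2$; (iii) the Leibniz expansion $\partial_\varsigma(D\Phi^k_{\xi_0})=\sum_{i=0}^{k-1}(D\Phi^{k-i-1}_{\xi_{i+1}})[\partial_\varsigma(D\Phi_{\xi_i})](D\Phi^i_{\xi_0})$. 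Your worry about correction terms in (iii) dissolves once $[\partial_\varsigma(D\Phi_{\xi_i})]$ is read as the derivative of the $i$-th factor along the orbit; your recursive set-up yields exactly the paper's formula. The separation of the $i=0$ term in the statement is purely cosmetic here (it carries the same coefficient $A_k$); it matters only later, when that term is estimated via $\|D^2\Phi_{\xi_0}(e^{(1)},\cdot)\|$.

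The one step your plan glosses over, and where the default move would fall short, is the passage from $\langle(D\Phi^{i+1}_{\xi_0})^{-1}[\partial_\varsigma(D\Phi_{\xi_i})]e^{(k)}_i,\,f^{(k)}\rangle$ (which is what (ii) and (iii) leave you with, after using $(D\Phi^{k-i-1}_{\xi_{i+1}})^*=[(D\Phi^{i+1}_{\xi_0})^{-1}]^*(D\Phi^k_{\xi_0})^*$ and the eigenvector relation) to the bound $\|[\partial_\varsigma(D\Phi_{\xi_i})]e^{(k)}_i\|\,\|e^{(k)}_{i+1}\|/|\det D\Phi^{i+1}_{\xi_0}|$. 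Cauchy--Schwarz with the operator norm of $(D\Phi^{i+1}_{\xi_0})^{-1}$ gives only $\|D\Phi^{i+1}_{\xi_0}\|/|\det D\Phi^{i+1}_{\xi_0}|$ in place of $\|e^{(k)}_{i+1}\|/|\det D\Phi^{i+1}_{\xi_0}|$, which is far weaker (the whole point is that $e^{(k)}$ is the contracted direction) and does not produce $\mathfrak E^{(k)}_i$. The paper's mechanism is the two-dimensional identity $\langle v,w\rangle=\pm\det(v,w^\top)$ combined with $\det(Av,Aw)=\det(A)\det(v,w)$: taking $(f^{(k)})^\top=e^{(k)}=(D\Phi^{i+1}_{\xi_0})^{-1}e^{(k)}_{i+1}$, both columns of the determinant acquire a factor $(D\Phi^{i+1}_{\xi_0})^{-1}$, which is pulled out as $\det(D\Phi^{i+1}_{\xi_0})^{-1}$, leaving $|\det([\partial_\varsigma(D\Phi_{\xi_i})]e^{(k)}_i,\,e^{(k)}_{i+1})|\le\|[\partial_\varsigma(D\Phi_{\xi_i})]e^{(k)}_i\|\,\|e^{(k)}_{i+1}\|$; the symmetric computation with $(e^{(k)})^\top=f^{(k)}$ gives the $\mathfrak F^{(k)}_i$ terms. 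With that identity supplied, your plan is complete and coincides with the paper's argument.
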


The remainder of this section is devoted to the proof of Proposition \ref{prop:apriori}. First of all,  in view of \eqref{eq:hypcorest} and Remark \ref{rmk:SVD}, we define the linear map $\mathcal L^{(k)}_{\xi_0} : T_{\xi_0} M \to T_{\xi_0} M$ by:
\begin{equation}\label{eq:L-def}
    \mathcal L^{(k)}_{\xi_0} = \mathcal L^{(k)} = (D\Phi^k_{\xi_0})^* \circ D\Phi_{\xi_0}^k
\end{equation} 
for which we have 
\begin{equation}\label{eq:L-eigens}
    \mathcal L^{(k)} e^{(k)} = \|e^{(k)}_k\|^2 e^{(k)} \quad \textrm{and} \quad \mathcal L^{(k)} f^{(k)} = \|f^{(k)}_k\|^2 f^{(k)}.
\end{equation}
We omit explicit reference to $\xi_0$ in the interest of clarity. 
We will also write the components of the unit vector field $f^{(k)}$ as $f^{(k)} = (f^{(k)}_1, f^{(k)}_2)$, and then write the covariant derivative of $f^{(k)}$ in $\mathbb R^2$ as 
\[
Df^{(k)} = \left(\partial_x f^{(k)}, \partial_y f^{(k)}\right),
\]
 where \begin{equation}\label{eq:Df-columns}
\partial_x f^{(k)} = \left( \partial_x f^{(k)}_1, \partial_x f^{(k)}_2\right) \quad \textrm{and} \quad \partial_y f^{(k)} = \left( \partial_y f^{(k)}_1, \partial_y f^{(k)}_2\right)
\end{equation}
are the columns of $Df^{(k)}$. We split the proof into three Lemmas.

\begin{lemma}\label{lem:Df-apriori-max} 
\begin{equation}\label{eq:Dfbound}
\|Df^{(k)}\| \leq \sqrt 2 \max_{\varsigma = x,y} \left\{ | \langle e^{(k)}, \partial_{\varsigma} f^{(k)} \rangle |\right\}.
\end{equation}
\end{lemma}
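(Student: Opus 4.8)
The plan is to exploit the fact that $f^{(k)}$ is a \emph{unit} vector field: differentiating the normalization constraint forces each partial derivative of $f^{(k)}$ to be orthogonal to $f^{(k)}$ itself, hence (in two dimensions) a scalar multiple of $e^{(k)}$. Once the columns of $Df^{(k)}$ are identified as scalar multiples of $e^{(k)}$, Sublemma \ref{lem:matrix-norm-bound} immediately converts the column bounds into the desired bound on $\|Df^{(k)}\|$.

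Concretely, I would first differentiate the identity $\langle f^{(k)}, f^{(k)}\rangle = (f^{(k)}_1)^2 + (f^{(k)}_2)^2 \equiv 1$ with respect to $\varsigma = x,y$ to get $\langle f^{(k)}, \partial_\varsigma f^{(k)}\rangle = 0$ for each $\varsigma$. Since $\{e^{(k)}, f^{(k)}\}$ is an orthonormal basis of $T_{\xi_0}M \cong \mathbb R^2$, the orthogonal complement of $f^{(k)}$ is spanned by $e^{(k)}$, so $\partial_\varsigma f^{(k)} = \langle e^{(k)}, \partial_\varsigma f^{(k)}\rangle\, e^{(k)}$, and therefore $\|\partial_\varsigma f^{(k)}\| = |\langle e^{(k)}, \partial_\varsigma f^{(k)}\rangle|$ for $\varsigma = x,y$. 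Then, recalling from \eqref{eq:Df-columns} that the columns of the matrix $Df^{(k)}$ are precisely $\partial_x f^{(k)}$ and $\partial_y f^{(k)}$, I would apply Sublemma \ref{lem:matrix-norm-bound} with $m = n = 2$ to conclude
\[
\|Df^{(k)}\| \leq \sqrt 2 \max_{\varsigma = x,y} \|\partial_\varsigma f^{(k)}\| = \sqrt 2 \max_{\varsigma = x,y} |\langle e^{(k)}, \partial_\varsigma f^{(k)}\rangle|,
\]
which is \eqref{eq:Dfbound}.

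There is no serious obstacle here; this is an elementary linear-algebra computation. The only point requiring minor care is bookkeeping: the derivatives $\partial_\varsigma f^{(k)}$ must be read off in the fixed coordinate frame at $\xi_0$ — where $\{\partial_x(\xi_0), \partial_y(\xi_0)\}$ is orthonormal — so that the matrix $Df^{(k)}$ appearing in \eqref{eq:Df-columns} is exactly the matrix of columns to which Sublemma \ref{lem:matrix-norm-bound} applies, and so that the inner products $\langle e^{(k)}, \partial_\varsigma f^{(k)}\rangle$ are the Euclidean ones. With that understood, the three displayed steps give the claim directly.
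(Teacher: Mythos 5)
Your proof is correct and follows essentially the same route as the paper's: both apply Sublemma \ref{lem:matrix-norm-bound} to the columns $\partial_x f^{(k)}, \partial_y f^{(k)}$, and both use the differentiated normalization $\langle f^{(k)},\partial_\varsigma f^{(k)}\rangle=0$ together with orthonormality of $\{e^{(k)},f^{(k)}\}$ to reduce $\|\partial_\varsigma f^{(k)}\|$ to $|\langle e^{(k)},\partial_\varsigma f^{(k)}\rangle|$. The only difference is cosmetic ordering: you eliminate the $f^{(k)}$-component before decomposing, while the paper decomposes first and then kills that term.
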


\begin{proof} Notice that by Sublemma \ref{lem:matrix-norm-bound}, putting $A = Df^{(k)}$, and $\mathbf a_1 = \partial_x f^{(k)}$ and $\mathbf a_2 = \partial_y f^{(k)}$, we have 
\begin{equation}\label{eq:Dfk-bd-1}
\|Df^{(k)}\| \leq \sqrt 2 \max_{\varsigma = x,y} \left\{ \partial_{\varsigma} f^{(k)}\right\}.
\end{equation}
Additionally, by orthonormality of $\{e^{(k)}, f^{(k)}\}$, we obtain: 
\begin{equation}\label{eq:linear-algebra}
\partial_\varsigma f^{(k)} = \langle e^{(k)}, \partial_\varsigma f^{(k)} \rangle e^{(k)} + \langle f^{(k)}, \partial_\varsigma f^{(k)} \rangle f^{(k)}.
\end{equation}
Writing the columns of $Df^{(k)}$ as in \eqref{eq:Df-columns}, equations \eqref{eq:Dfk-bd-1} and \eqref{eq:linear-algebra} together give us:
\[
\|Df^{(k)}\|\leq \sqrt 2 \max_{\varsigma = x,y}\{\|\partial_{\varsigma}f^{(k)}\| \} 
\leq \sqrt 2 \max_{\varsigma = x,y}\{   | \langle e^{(k)},\partial_{\varsigma}f^{(k)}\rangle| + 
| \langle f^{(k)},\partial_{\varsigma}f^{(k)}\rangle| 
\}.
\]
Differentiating the equality \(\|f^{(k)}\|^2=\langle f^{(k)}, f^{(k)}\rangle=1\) we get that \( | \langle f^{(k)},\partial_{\varsigma}f^{(k)}\rangle| = 0 \), thus obtaining~\eqref{eq:Dfbound}.
\end{proof}

\begin{lemma}\label{lem:Dfboundnew}
For $\varsigma = x,y$,
\begin{equation}\label{eq:Dfboundnew}
|\langle e^{(k)}, \partial_\varsigma f^{(k)}\rangle|
\leq 
\frac{|\langle e^{(k)}, ( \partial_\varsigma \mathcal L^{(k)}) f^{(k)}\rangle|}{\|f^{(k)}_k\|^2 - \|e^{(k)}_k\|^2}.
\end{equation}
\end{lemma}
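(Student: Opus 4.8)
The plan is to derive \eqref{eq:Dfboundnew} as an \emph{equality} — the stated inequality then follows by taking absolute values — by differentiating the eigenvalue relations \eqref{eq:L-eigens} in the coordinate direction $\varsigma$. The key structural facts are that $\mathcal L^{(k)} = (D\Phi^k_{\xi_0})^* \circ D\Phi^k_{\xi_0}$ is self-adjoint and, since $\Phi^k$ is $C^2$ near $\xi_0$, formula \eqref{c4} guarantees that the unit vector fields $e^{(k)}, f^{(k)}$ are $C^1$ in the base point; consequently $\mathcal L^{(k)}$ is $C^1$ and so are its eigenvalues $\|e^{(k)}_k\|^2 = \langle \mathcal L^{(k)} e^{(k)}, e^{(k)}\rangle$ and $\|f^{(k)}_k\|^2 = \langle \mathcal L^{(k)} f^{(k)}, f^{(k)}\rangle$. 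All objects below are therefore differentiable in a neighbourhood of $\xi_0$, and we compute in the fixed coordinate system at $\xi_0$ in which $\{\partial_x(\xi_0),\partial_y(\xi_0)\}$ is orthonormal, so that $\langle\cdot,\cdot\rangle$ at $\xi_0$ is the Euclidean product and the matrix of $\mathcal L^{(k)}_{\xi_0}$ is symmetric.

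First I would differentiate the relation $\mathcal L^{(k)} f^{(k)} = \|f^{(k)}_k\|^2 f^{(k)}$ from \eqref{eq:L-eigens}, valid near $\xi_0$, with respect to $\varsigma$, obtaining at $\xi_0$
\[
(\partial_\varsigma \mathcal L^{(k)}) f^{(k)} + \mathcal L^{(k)}\big(\partial_\varsigma f^{(k)}\big) = \big(\partial_\varsigma \|f^{(k)}_k\|^2\big) f^{(k)} + \|f^{(k)}_k\|^2\, \partial_\varsigma f^{(k)}.
\]
Then I would take the inner product of both sides with $e^{(k)}$. On the right-hand side the first term vanishes because $\langle e^{(k)}, f^{(k)}\rangle = 0$ by orthonormality of hyperbolic coordinates, and on the left-hand side self-adjointness of $\mathcal L^{(k)}$ together with the second relation in \eqref{eq:L-eigens} gives
\[
\langle e^{(k)}, \mathcal L^{(k)}(\partial_\varsigma f^{(k)})\rangle = \langle \mathcal L^{(k)} e^{(k)}, \partial_\varsigma f^{(k)}\rangle = \|e^{(k)}_k\|^2 \langle e^{(k)}, \partial_\varsigma f^{(k)}\rangle.
\]
Rearranging, the terms proportional to $\langle e^{(k)}, \partial_\varsigma f^{(k)}\rangle$ collect to give
\[
\langle e^{(k)}, (\partial_\varsigma \mathcal L^{(k)}) f^{(k)}\rangle = \big(\|f^{(k)}_k\|^2 - \|e^{(k)}_k\|^2\big)\, \langle e^{(k)}, \partial_\varsigma f^{(k)}\rangle .
\]
Since hyperbolic coordinates are defined only when $C_{\xi_0,k} < 1$, which by \eqref{eq:eccentricity1} means $\|e^{(k)}_k\| < \|f^{(k)}_k\|$, the factor $\|f^{(k)}_k\|^2 - \|e^{(k)}_k\|^2$ is strictly positive; dividing and taking absolute values yields \eqref{eq:Dfboundnew}.

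The algebra is short, so I expect the only point genuinely requiring care to be the legitimacy of the differentiation step, i.e.\ that $e^{(k)}, f^{(k)}, \mathcal L^{(k)}$ and the two eigenvalues $\|e^{(k)}_k\|^2, \|f^{(k)}_k\|^2$ are $C^1$ functions of the base point near $\xi_0$. This is exactly what \eqref{c4} and the $C^2$-regularity of $\Phi^k$ provide, together with the fact that the two eigenvalues of $\mathcal L^{(k)}$ remain simple in a neighbourhood of $\xi_0$ (again because $C_{\xi_0,k}<1$ is an open condition), which is what keeps the eigenvector and eigenvalue branches well defined and differentiable. Once this is recorded, the displayed identity follows immediately from the eigenvalue equations and self-adjointness of $\mathcal L^{(k)}$.
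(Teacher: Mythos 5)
Your proof is correct and follows essentially the same route as the paper's: differentiate the eigenvalue relation $\mathcal L^{(k)} f^{(k)} = \|f^{(k)}_k\|^2 f^{(k)}$, pair with $e^{(k)}$, and use orthogonality together with self-adjointness of $\mathcal L^{(k)}$ to isolate $\langle e^{(k)},\partial_\varsigma f^{(k)}\rangle$. Your additional remarks on the $C^1$-regularity of the eigenvector fields and the strict positivity of $\|f^{(k)}_k\|^2-\|e^{(k)}_k\|^2$ are sound and merely make explicit what the paper leaves implicit.
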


\begin{proof}
By differentiating the second equation in \eqref{eq:L-eigens}, we have
\[   (\partial_{\varsigma}\mathcal L^{(k)})f^{(k)} +\mathcal L^{(k)}\partial_{\varsigma}f^{(k)}=(\partial_{\varsigma}\|f^{(k)}_k\|^2)f^{(k)}+\|f^{(k)}_k\|^2\partial_{\varsigma}f^{(k)}.
\]
Taking the scalar product with $e^{(k)}$, and using the fact that \(  e^{(k)}, f^{(k)}  \) are orthogonal, gives
\begin{equation}\label{eq-eigen2}
 \langle e^{(k)},(\partial_{\varsigma}\mathcal L^{(k)})f^{(k)}\rangle+\langle e^{(k)},\mathcal L^{(k)}\partial_{\varsigma}f^{(k)}\rangle=\langle e^{(k)},\|f^{(k)}_k\|^2\partial_{\varsigma}f^{(k)}\rangle.
\end{equation}
Using the first equation in \eqref{eq:L-eigens} and the fact that the matrix \(\mathcal L^{(k)} = (D\Phi^k_{\xi_0})^*\circ(D\Phi^k_{\xi_0})\) is self-adjoint, we have
$$\langle e^{(k)},\mathcal L^{(k)}\partial_{\varsigma}f^{(k)}\rangle=\langle\mathcal L^{(k)}e^{(k)},\partial_{\varsigma}f^{(k)}\rangle=\|e^{(k)}_k\|^2\langle e^{(k)}, \partial_{\varsigma}f^{(k)}\rangle.$$ 
Substituting this last equality into \eqref{eq-eigen2} gives
\[
 \langle e^{(k)},(\partial_{\varsigma}\mathcal L^{(k)})f^{(k)}\rangle+\|e^{(k)}_k\|^2\langle e^{(k)},\partial_{\varsigma}f^{(k)}\rangle=\|f^{(k)}_k\|^2\langle e^{(k)},\partial_{\varsigma}f^{(k)}\rangle,
\]
which gives \eqref{eq:Dfboundnew}.
\end{proof}

\begin{lemma}\label{lem:eq-1}
For \(\varsigma=x,y\),
\begin{equation}\label{eq-1}
    |\langle e^{(k)}, (\partial_\varsigma \mathcal L^{(k)})f^{(k)}\rangle| \leq \|f^{(k)}_k\|^2\sum_{i=0}^{k-1}
\frac{\|[\partial_{\varsigma}(D\Phi_{\xi_{i}})]e^{(k)}_{i}\|\|e^{(k)}_{i+1}\|}{|\det(D\Phi^{i+1}_{\xi_0})|}
+
\|e^{(k)}_k\|^2\sum_{i=0}^{k-1}\frac{\|[\partial_{\varsigma}(D\Phi_{\xi_{i}})]f^{(k)}_{i}\|\|f^{(k)}_{i+1}\|}{|\det(D\Phi^{i+1}_{\xi_0})|}.
\end{equation}

\end{lemma}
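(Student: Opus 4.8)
The plan is to expand $\partial_\varsigma \mathcal L^{(k)}$ using the product rule on the definition $\mathcal L^{(k)} = (D\Phi^k_{\xi_0})^* \circ D\Phi^k_{\xi_0}$, then to telescope the derivative of $D\Phi^k$ along the orbit via the chain rule $D\Phi^k_{\xi_0} = D\Phi_{\xi_{k-1}}\circ\cdots\circ D\Phi_{\xi_0}$. First I would write
\[
\partial_\varsigma \mathcal L^{(k)} = (\partial_\varsigma(D\Phi^k_{\xi_0}))^* \circ D\Phi^k_{\xi_0} + (D\Phi^k_{\xi_0})^* \circ \partial_\varsigma(D\Phi^k_{\xi_0}),
\]
so that, pairing against $e^{(k)}$ and $f^{(k)}$ and using self-adjointness together with the eigenrelations \eqref{eq:L-eigens} (i.e. $D\Phi^k_{\xi_0}e^{(k)} = \|e^{(k)}_k\| \,(e^{(k)}_k/\|e^{(k)}_k\|)$ and similarly for $f^{(k)}$, with $e^{(k)}_k \perp f^{(k)}_k$), the cross terms collapse and one is left with something like
\[
\langle e^{(k)}, (\partial_\varsigma \mathcal L^{(k)}) f^{(k)}\rangle = \langle \partial_\varsigma(D\Phi^k_{\xi_0})\, e^{(k)},\, f^{(k)}_k\rangle + \langle e^{(k)}_k,\, \partial_\varsigma(D\Phi^k_{\xi_0})\, f^{(k)}\rangle,
\]
whose two terms carry the factors $\|f^{(k)}_k\|$ and $\|e^{(k)}_k\|$ respectively after normalizing $f^{(k)}_k$ and $e^{(k)}_k$ to unit vectors.

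The second and main step is to control $\partial_\varsigma(D\Phi^k_{\xi_0})$ applied to a tangent vector. Writing $D\Phi^k_{\xi_0}$ as the ordered composition of the one-step differentials $D\Phi_{\xi_j}$ along the orbit and differentiating by the Leibniz rule gives a sum of $k$ terms, the $i$-th of which replaces $D\Phi_{\xi_i}$ by its $\varsigma$-derivative $\partial_\varsigma(D\Phi_{\xi_i})$ — but one must be careful, since $\partial_\varsigma$ is the derivative with respect to the base-point coordinate, and differentiating $D\Phi_{\xi_i}$ as a function of $\xi_0$ brings in the Jacobian $D\Phi^i_{\xi_0}$ of the orbit map. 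Applied to a vector $v$, the $i$-th term is $D\Phi^{k-i-1}_{\xi_{i+1}}\bigl[(\partial_\varsigma(D\Phi_{\xi_i}))\,(D\Phi^i_{\xi_0}v)\bigr]$. Taking $v = e^{(k)}$ (resp. $f^{(k)}$), we have $D\Phi^i_{\xi_0}e^{(k)} = e^{(k)}_i$ (resp. $f^{(k)}_i$) by the definition \eqref{eq:eik}, so the $i$-th summand becomes $D\Phi^{k-i-1}_{\xi_{i+1}}\bigl[(\partial_\varsigma(D\Phi_{\xi_i}))\,e^{(k)}_i\bigr]$.

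The third step is to estimate the norm of each such term. Pairing $D\Phi^{k-i-1}_{\xi_{i+1}}w$ against the unit vector $f^{(k)}_k/\|f^{(k)}_k\|$ and using the adjoint, $\langle D\Phi^{k-i-1}_{\xi_{i+1}}w, f^{(k)}_k/\|f^{(k)}_k\|\rangle = \langle w, (D\Phi^{k-i-1}_{\xi_{i+1}})^*(f^{(k)}_k/\|f^{(k)}_k\|)\rangle$; here I would use that $f^{(k)}_k = D\Phi^{k-i-1}_{\xi_{i+1}}(D\Phi_{\xi_i}f^{(k)}_i) = D\Phi^{k-i-1}_{\xi_{i+1}}f^{(k)}_{i+1}$, together with the determinant identity $\|(D\Phi^{k-i-1}_{\xi_{i+1}})^{-1}\|^{-1} = |\det D\Phi^{k-i-1}_{\xi_{i+1}}| / \|D\Phi^{k-i-1}_{\xi_{i+1}}\|$ and $|\det D\Phi^k_{\xi_0}| = |\det D\Phi^{i+1}_{\xi_0}|\cdot|\det D\Phi^{k-i-1}_{\xi_{i+1}}|$, to convert everything into the ratio $\|f^{(k)}_{i+1}\|/|\det D\Phi^{i+1}_{\xi_0}|$ multiplying $\|(\partial_\varsigma(D\Phi_{\xi_i}))\,f^{(k)}_i\|$ (and symmetrically for the $e$-term with $e^{(k)}_{i+1}$). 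Summing over $i$ and reinstating the factors $\|f^{(k)}_k\|^2$ and $\|e^{(k)}_k\|^2$ from the first step then yields exactly \eqref{eq-1}.

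The step I expect to be the main obstacle is the bookkeeping in the second step: correctly accounting for the base-point Jacobian $D\Phi^i_{\xi_0}$ that appears when differentiating $D\Phi_{\xi_i}$ with respect to $\xi_0$, and making sure the telescoping in the third step lines up the norms, co-norms, and determinants so that the mixed products over the "head" $\Phi^{i+1}$ and "tail" $\Phi^{k-i-1}$ of the orbit combine into the clean ratio $\|e^{(k)}_{i+1}\|/|\det D\Phi^{i+1}_{\xi_0}|$. The linear-algebra manipulations (product rule on $\mathcal L^{(k)}$, self-adjointness, orthogonality of $e^{(k)}_k, f^{(k)}_k$) are routine; the care is all in tracking which differential acts on which space and in the determinant cancellations.
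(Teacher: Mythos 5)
Your proposal follows essentially the same route as the paper's proof: the product rule on $\mathcal L^{(k)} = (D\Phi^k_{\xi_0})^*D\Phi^k_{\xi_0}$, the telescoping Leibniz expansion of $\partial_\varsigma(D\Phi^k_{\xi_0})$ along the orbit, the eigenrelations \eqref{eq:L-eigens}, and the two-dimensional determinant identities (together with $|\det D\Phi^k_{\xi_0}| = \|e^{(k)}_k\|\,\|f^{(k)}_k\|$ and multiplicativity of the determinant) to produce the $1/|\det D\Phi^{i+1}_{\xi_0}|$ factors. The only difference is cosmetic — you apply the determinant trick to the tail map $D\Phi^{k-i-1}_{\xi_{i+1}}$ while the paper applies it to $(D\Phi^{i+1}_{\xi_0})^{-1}$ after pulling the adjoint through the eigenrelation — and both give the identical bound.
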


\begin{proof}
By the Leibniz product rule, we have: 
\begin{equation}\label{eq:leibniz-1}
\partial_{\varsigma}\mathcal L^{(k)}=  \partial_{\varsigma}[(D\Phi^k_{\xi_0})^*(D\Phi^k_{\xi_0})]=[\partial_{\varsigma}(D\Phi^k_{\xi_0})]^*(D\Phi^k_{\xi_0})+(D\Phi^k_{\xi_0})^* [\partial_{\varsigma}(D\Phi^k_{\xi_0})].
\end{equation}
By inductively applying the Leibniz rule to $\partial_\varsigma(D\Phi_{\xi_0}^k) = \partial_\varsigma( D\Phi_{\xi_{k-1}} \cdots D\Phi_{\xi_0})$, we obtain:
\begin{equation}\label{eq:leibniz-k}
\partial_\varsigma(D\Phi_{\xi_0}^k) = \sum_{i=0}^{k-1} (D\Phi_{\xi_{i+1}}^{k-i-1}) [\partial_\varsigma (D\Phi_{\xi_i})] (D\Phi_{\xi_0}^i).
\end{equation}
By \eqref{eq:leibniz-1} and \eqref{eq:leibniz-k}:
\[
\begin{aligned}
\partial_{\varsigma}\mathcal L^{(k)}
&=\left(\sum_{i=0}^{k-1}(D\Phi^{k-i-1}_{\xi_{i+1}})[\partial_{\varsigma}(D\Phi_{\xi_{i}})](D\Phi^{i}_{\xi_0})\right)^*(D\Phi^k_{\xi_0})+(D\Phi^k_{\xi_0})^*\left(\sum_{i=0}^{k-1}(D\Phi^{k-i-1}_{\xi_{i+1}})[\partial_{\varsigma}(D\Phi_{\xi_{i}})](D\Phi^{i}_{\xi_0})\right).
\end{aligned}
\]
Applying $\partial_\varsigma \mathcal L^{(k)}$ to $f^{(k)}$ and taking the scalar product with $e^{(k)}$ gives:
\begin{equation}\label{eq:bigsum-1}
 \begin{aligned}
 \langle e^{(k)},\:(\partial_{\varsigma}\mathcal L^{(k)}) f^{(k)}\rangle
 =&\bigg\langle e^{(k)},\:\left(\sum_{i=0}^{k-1}(D\Phi^{k-i-1}_{\xi_{i+1}})[\partial_{\varsigma}(D\Phi_{\xi_{i}})](D\Phi^{i}_{\xi_0})\right)^*(D\Phi^k_{\xi_0})f^{(k)}\bigg\rangle\\
&+\bigg\langle e^{(k)},\: (D\Phi^k_{\xi_0})^*\left(\sum_{i=0}^{k-1}(D\Phi^{k-i-1}_{\xi_{i+1}})[\partial_{\varsigma}(D\Phi_{\xi_{i}})](D\Phi^{i}_{\xi_0})\right)f^{k)}\bigg\rangle\\
=&\sum_{i=0}^{k-1}\bigg\langle e^{(k)},\:\left((D\Phi^{k-i-1}_{\xi_{i+1}})[\partial_{\varsigma}(D\Phi_{\xi_{i}})](D\Phi^{i}_{\xi_0})\right)^*(D\Phi^k_{\xi_0})f^{(k)}\bigg\rangle\\
&+\sum_{i=0}^{k-1}\bigg\langle e^{(k)}, \:(D\Phi^k_{\xi_0})^*(D\Phi^{k-i-1}_{\xi_{i+1}})[\partial_{\varsigma}(D\Phi_{\xi_{i}})](D\Phi^{i}_{\xi_0})f^{k)}\bigg\rangle \\
&= \sum_{i=0}^{k-1} \mathfrak{A}_i + \sum_{i=0}^{k-1} \mathfrak{B}_i,
\end{aligned}
\end{equation}
where:
\begin{align*} 
\mathfrak A_i &= \bigg\langle e^{(k)},\:\left((D\Phi^{k-i-1}_{\xi_{i+1}})[\partial_{\varsigma}(D\Phi_{\xi_{i}})](D\Phi^{i}_{\xi_0})\right)^*(D\Phi^k_{\xi_0})f^{(k)}\bigg\rangle,\\
\mathfrak B_i &= \bigg\langle e^{(k)}, \:(D\Phi^k_{\xi_0})^*(D\Phi^{k-i-1}_{\xi_{i+1}})[\partial_{\varsigma}(D\Phi_{\xi_{i}})](D\Phi^{i}_{\xi_0})f^{k)}\bigg\rangle.
\end{align*}
Consider the summands in the first sum of the right hand side of \eqref{eq:bigsum-1}. We have:
\begin{equation}\label{eq:cleanup-f1}
\begin{aligned}
\mathfrak A_i
&= \Big\langle (D\Phi^{k-i-1}_{\xi_{i+1}}) [\partial_{\varsigma}(D\Phi_{\xi_{i}})] (D\Phi^{i}_{\xi_0})e^{(k)},\:D\Phi^k_{\xi_0}f^{(k)}\Big\rangle \\
&= \Big\langle[\partial_{\varsigma}D\Phi_{\xi_{i}}]( D\Phi^{i}_{\xi_0})e^{(k)},\:(D\Phi^{k-i-1}_{\xi_{i+1}})^* (D\Phi^k_{\xi_0})f^{(k)}\Big\rangle \\
&= \Big\langle[\partial_{\varsigma}D\Phi_{\xi_{i}}]e^{(k)}_i,\:(D\Phi^{k-i-1}_{\xi_{i+1}})^* (D\Phi^k_{\xi_0})f^{(k)}\Big\rangle.
\end{aligned}
\end{equation}
Observing that $D\Phi_{\xi_{i+1}}^{k-i-1} = D\Phi_{\xi_0}^{k} (D\Phi_{\xi_{0}}^{i+1})^{-1}$,
we obtain:
\begin{equation}\label{eq-ob}
 (D\Phi^{k-i-1}_{\xi_{i+1}})^*=\left[(D\Phi^{i+1}_{\xi_0})^{-1}\right]^*(D\Phi^k_{\xi_0})^*.
\end{equation}
Therefore, recalling that $\mathcal L^{(k)} f^{(k)} = (D\Phi_{\xi_0}^k)^*(D\Phi_{\xi_0}^k) f^{(k)} = \|f^{(k)}_k\|^2 f^{(k)}$, we obtain the expression:
\begin{align*}
(D\Phi^{k-i-1}_{\xi_{i+1}})^* (D\Phi^k_{\xi_0})f^{(k)} &= \big[(D\Phi^{i+1}_{\xi_0})^{-1}\big]^*(D\Phi^k_{\xi_0})^*(D\Phi^k_{\xi_0})f^{(k)} = \|f^{(k)}_{k}\|^{2}
\big[(D\Phi^{i+1}_{\xi_0})^{-1}\big]^* f^{(k)}.
\end{align*}
Applying this to the second argument of the right hand side of \eqref{eq:cleanup-f1}, we get: 
\begin{equation}\label{eq:cleanup-f2}
\begin{aligned}
\mathfrak A_i
&= \|f^{(k)}_k\|^2\Big\langle[\partial_{\varsigma}D\Phi_{\xi_{i}}]e^{(k)}_i,\:\big[(D\Phi^{i+1}_{\xi_0})^{-1}\big]^* f^{(k)}\Big\rangle = \|f^{(k)}_k\|^2\Big\langle(D\Phi^{i+1}_{\xi_0})^{-1}[\partial_{\varsigma}D\Phi_{\xi_{i}}]e^{(k)}_i,\:f^{(k)}\Big\rangle.
\end{aligned}
\end{equation}
To estimate this inner product in \eqref{eq:cleanup-f2}, we use two basic properties of the determinant: given a $2 \times 2$ matrix $A$ and two vectors $v$, $w$, we have:
\begin{equation}\label{eq:det-scalar-product}
    \det(Av, Aw) = \det(A)\det(v,w) \quad \textrm{and} \quad \det(v,w) = \pm \langle v, w^\top\rangle 
\end{equation}
where $\det(v,w)$ refers to the determinant of a $2 \times 2$ matrix whose columns are the vectors $v,w$, and $w^\top$ is a vector orthogonal to $w$ with $\|w\| = \|w^\top\|$. Taking $(f^{(k)})^\top = e^{(k)}$, applying these properties to \eqref{eq:cleanup-f2} gives us: 
\begin{equation}\label{eq:cleanup-f3}
\begin{aligned}
\mathfrak A_i
&= \|f^{(k)}_k\|^2\Big\langle(D\Phi^{i+1}_{\xi_0})^{-1}[\partial_{\varsigma}D\Phi_{\xi_{i}}]e^{(k)}_i,\:f^{(k)}\Big\rangle \\
&= \pm \|f^{(k)}_k\|^2\det \Big((D\Phi^{i+1}_{\xi_0})^{-1}[\partial_{\varsigma}D\Phi_{\xi_{i}}]e^{(k)}_i,\:(f^{(k)})^\top\Big)\\
&= \pm \|f^{(k)}_k\|^2\det \Big((D\Phi^{i+1}_{\xi_0})^{-1}[\partial_{\varsigma}D\Phi_{\xi_{i}}]e^{(k)}_i,\:e^{(k)}\Big) \\
&= \pm \|f^{(k)}_k\|^2\det \Big((D\Phi^{i+1}_{\xi_0})^{-1}[\partial_{\varsigma}D\Phi_{\xi_{i}}]e^{(k)}_{i},\:(D\Phi^{i+1}_{\xi_0})^{-1} e^{(k)}_{i+1}\Big) \\
&= \pm \|f^{(k)}_k\|^2 \det (D\Phi_{\xi_0}^{i+1})^{-1} \det \Big([\partial_{\varsigma}D\Phi_{\xi_{i}}]e^{(k)}_{i},\: e^{(k)}_{i+1}\Big).
\end{aligned}
\end{equation}
By \eqref{eq:det-scalar-product} and the Cauchy-Schwarz inequality, we have 
\begin{equation}\label{eq:det-vw-est}
    |\det(v,w)| = |\langle v, w^\top\rangle| \leq \|v\|\|w\|.
\end{equation}
Taking the absolute value of both sides of \eqref{eq:cleanup-f3} and applying \eqref{eq:det-vw-est} to $\det \big([\partial_{\varsigma}D\Phi_{\xi_{i}}]e^{(k)}_{i},\: e^{(k)}_{i+1}\big)$, we obtain: 
\begin{equation}\label{eq:cleanup-f4}
\begin{aligned}
|\mathfrak A_i|\leq \|f^{(k)}_k\|^2 \frac{\|\partial_\varsigma [D\Phi_{\xi_i}] e_i^{(k)}\| \|e^{(k)}_{i+1}\|}{|\det(D\Phi_{\xi_0}^{i+1})|}.
\end{aligned}
\end{equation}
These are the summands in the first sum in \eqref{eq-1}.
We perform a similar calculation on the summands of the second sum on the right hand side of \eqref{eq:bigsum-1}. We have: 
\begin{equation}\label{eq:cleanup-e1}
\begin{aligned}
    \mathfrak B_i
    &= \Big\langle D\Phi^k_{\xi_0}e^{(k)},\:(D\Phi^{k-i-1}_{\xi_{i+1}}) [\partial_{\varsigma}(D\Phi_{\xi_{i}})] (D\Phi^{i}_{\xi_0})f^{(k)}\Big\rangle\\
    &= \Big\langle(D\Phi^{k-i-1}_{\xi_{i+1}})^* (D\Phi^k_{\xi_0})e^{(k)},\:[\partial_{\varsigma}(D\Phi_{\xi_{i}})] (D\Phi^{i}_{\xi_0})f^{(k)}\Big\rangle \\
    &= \Big\langle(D\Phi^{k-i-1}_{\xi_{i+1}})^* (D\Phi^k_{\xi_0})e^{(k)},\:[\partial_{\varsigma}(D\Phi_{\xi_{i}})] f^{(k)}_i\Big\rangle.
\end{aligned}
\end{equation}
Recall now that $e^{(k)}$ is an eigenvector of $(D\Phi_{\xi_0}^k)^*(D\Phi_{\xi_0}^k)$ with eigenvalue $\|e^{(k)}_k\|^2$. By \eqref{eq-ob}, we get: 
\[
(D\Phi_{\xi_{i+1}}^{k-i-1})^*(D\Phi_{\xi_0}^k)e^{(k)} = \big[ (D\Phi_{\xi_0}^{i+1})^{-1}\big]^*(D\Phi_{\xi_0}^k)^*(D\Phi_{\xi_0}^k)e^{(k)} = \|e^{(k)}_k\|^2 \big[ (D\Phi_{\xi_0}^{i+1})^{-1}\big]^* e^{(k)}.
\]
Applying this to the first argument of the right hand side of \eqref{eq:cleanup-e1}, we obtain: 
\begin{equation}\label{eq:cleanup-e2}
\begin{aligned}
    \mathfrak B_i
    &= \|e^{(k)}_k\|^2 \Big\langle \big[ (D\Phi_{\xi_0}^{i+1})^{-1}\big]^* e^{(k)},\:[\partial_{\varsigma}(D\Phi_{\xi_{i}})] f^{(k)}_i\Big\rangle \\
    &= \|e^{(k)}_k\|^2 \Big\langle e^{(k)},\:(D\Phi_{\xi_0}^{i+1})^{-1}[\partial_{\varsigma}(D\Phi_{\xi_{i}})] f^{(k)}_i\Big\rangle .
    \end{aligned}
\end{equation}
We again use \eqref{eq:det-scalar-product} to estimate this inner product, this time taking $(e^{(k)})^\top = f^{(k)}$: 
\begin{equation}\label{eq:cleanup-e3}
\begin{aligned}
\mathfrak B_i
&= \pm \|e^{(k)}_k\|^2 \det \Big( f^{(k)},\: (D\Phi_{\xi_0}^{i+1})^{-1}[\partial_\varsigma (D\Phi_{\xi_i})] f^{(k)}_i\Big) \\
&= \pm \|e^{(k)}_k\|^2 \det \Big( (D\Phi_{\xi_0}^{i+1})^{-1}f^{(k)}_{i+1},\: (D\Phi_{\xi_0}^{i+1})^{-1}[\partial_\varsigma (D\Phi_{\xi_i})] f^{(k)}_i\Big) \\
&= \pm \|e^{(k)}_k\|^2 \det(D\Phi_{\xi_0}^{i+1})^{-1} \det \Big(f^{(k)}_{i+1},\: [\partial_\varsigma (D\Phi_{\xi_i})] f^{(k)}_i\Big).
\end{aligned}
\end{equation}
Using \eqref{eq:det-vw-est}, taking the absolute value of both sides of \eqref{eq:cleanup-e3} gives us: 
\begin{equation}\label{eq:cleanup-e4}
\begin{aligned} 
|\mathfrak B_i| \leq \|e^{(k)}_k\|^2 \frac{\|[\partial_\varsigma(D\Phi_{\xi_i})]f^{(k)}_i\|\|f^{(k)}_{i+1}\|}{|\det(D\Phi_{\xi_{0}}^{i+1}|}.
\end{aligned}
\end{equation}
These are the summands of the second sum in \eqref{eq-1}. Now, \eqref{eq-1} follows after taking the absolute value of \eqref{eq:bigsum-1} and applying \eqref{eq:cleanup-f4} and \eqref{eq:cleanup-e4}. 
\end{proof}

\begin{proof}[Proof of Proposition \ref{prop:apriori}] The proposition is an immediate consequence of Lemmas \ref{lem:Df-apriori-max} - \ref{lem:eq-1}.
\end{proof}

\section{Slow Variation for Quasi-Hyperbolic Points}\label{sec-slow} 

We now consider the general bound \eqref{eq-apriori} obtained in Proposition \ref{prop:apriori}  and will use  the quasi-hyperbolicity conditions to get more explicit bounds for the three terms on the right hand side of  \eqref{eq-apriori}.  First of all, in  Section \ref{sec:aposteriori} we will prove the following.
\begin{proposition}\label{prop:aposteriori}
For \(\varsigma=x,y\) we have 
\begin{equation}\label{eq:aposteriori}
\|Df^{(k)}\| \leq K_{1} \left( \mathfrak E_0^{(k)} + \sum_{i=1}^{k-1} \mathfrak E_i^{(k)} + \frac{\|e^{(k)}_k\|^2}{\|f^{(k)}_k\|^2} \sum_{i=0}^{k-1} \mathfrak F_{i}^{(k)}\right). 
\end{equation}
\end{proposition}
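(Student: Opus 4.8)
The plan is to obtain \eqref{eq:aposteriori} directly from the \emph{a priori} bound \eqref{eq-apriori} of Proposition~\ref{prop:apriori} by replacing the coefficients $A_k$ and $B_k$ by $K_1$ and $K_1\|e^{(k)}_k\|^2/\|f^{(k)}_k\|^2$ respectively. The starting observation is that both coefficients can be rewritten in terms of the accumulated co-eccentricity $C_{\xi_0,k}=\|e^{(k)}_k\|/\|f^{(k)}_k\|$: dividing numerator and denominator of each by $\|f^{(k)}_k\|^2$ gives
\[
A_k=\frac{\sqrt2}{1-C_{\xi_0,k}^2}\quand B_k=\frac{\|e^{(k)}_k\|^2}{\|f^{(k)}_k\|^2}\,A_k=C_{\xi_0,k}^2\,A_k .
\]

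The next step is to bound $A_k$ by $K_1$. By the second inequality in \eqref{eq:constants-basic-sing1} we have $C_{\xi_0,k}\le Bc^k$, and since $Bc<1$ together with $B\ge1$ forces $c<1$, for every $k\ge1$ we get $C_{\xi_0,k}^2\le B^2c^{2k}\le B^2c^2<1$; this is the only place where the quasi-hyperbolicity hypothesis enters. Hence
\[
A_k=\frac{\sqrt2}{1-C_{\xi_0,k}^2}\le\frac{\sqrt2}{1-B^2c^2}=\frac{Q_0^2}{\sqrt2}=K_1 ,
\]
recalling the definitions in \eqref{eq:sing-constants-v0} (equivalently this is the estimate $\widetilde C_{\xi_0,k}\le Q_0$ already recorded in \eqref{eq:sqrt-est}, since $A_k=\tfrac1{\sqrt2}\cdot\tfrac2{1-C_{\xi_0,k}^2}\le\tfrac1{\sqrt2}\widetilde C_{\xi_0,k}^2$). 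Combined with the identity $B_k=C_{\xi_0,k}^2A_k$ this yields $B_k\le K_1\|e^{(k)}_k\|^2/\|f^{(k)}_k\|^2$.

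Finally, since $\mathfrak E^{(k)}_i\ge0$ and $\mathfrak F^{(k)}_i\ge0$, substituting $A_k\le K_1$ and $B_k\le K_1\|e^{(k)}_k\|^2/\|f^{(k)}_k\|^2$ into \eqref{eq-apriori} gives
\[
\|Df^{(k)}\|\le A_k\mathfrak E^{(k)}_0+A_k\sum_{i=1}^{k-1}\mathfrak E^{(k)}_i+B_k\sum_{i=0}^{k-1}\mathfrak F^{(k)}_i\le K_1\mathfrak E^{(k)}_0+K_1\sum_{i=1}^{k-1}\mathfrak E^{(k)}_i+K_1\frac{\|e^{(k)}_k\|^2}{\|f^{(k)}_k\|^2}\sum_{i=0}^{k-1}\mathfrak F^{(k)}_i ,
\]
which is precisely \eqref{eq:aposteriori}. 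I do not expect a genuine obstacle here: the argument is purely algebraic, and the only two points to keep track of are the elementary bound $C_{\xi_0,k}^2\le B^2c^2$ (so that $A_k\le K_1$) and the decision to keep $B_k$ paired with the ratio $\|e^{(k)}_k\|^2/\|f^{(k)}_k\|^2$ rather than bounding it more crudely, since that ratio is retained explicitly in the statement and will be estimated later in Section~\ref{sec-slow} (via $C_{\xi_0,k}^2\le B^2c^{2k}$, or the sharper convergence bounds of Section~\ref{sec:hypcoords-cvgce}) once the terms $\mathfrak E^{(k)}_i$ and $\mathfrak F^{(k)}_i$ are handled.
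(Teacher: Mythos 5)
Your argument is correct and is essentially the paper's own proof: the paper also reduces the proposition to the two bounds $A_k\leq K_1$ and $B_k\leq K_1\|e^{(k)}_k\|^2/\|f^{(k)}_k\|^2$, obtained from $C_{\xi_0,k}\leq Bc^k$ via $\|f^{(k)}_k\|^2-\|e^{(k)}_k\|^2\geq(1-B^2c^2)\|f^{(k)}_k\|^2$, and then substitutes into \eqref{eq-apriori}. Your reformulation $A_k=\sqrt2/(1-C_{\xi_0,k}^2)$ and $B_k=C_{\xi_0,k}^2A_k$ is just a cosmetic repackaging of the same computation.
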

In Section \ref{subsubsec:var-pf-I} and \ref{subsubsec:var-pf-II} we estimate the first two terms  on the right hand side of \eqref{eq:aposteriori} assuming quasi-hyperbolicity Conditions \eqref{eq:condition-I} and \eqref{eq:condition-II} respectively. Then in Section \ref{sec:Fik} we estimate the third term and finally, in Section \ref{sec:1stineq}, we combine these estimates to complete the proof of the first inequality in Theorem~\ref{thm:main-phase}.

\subsection{Proof of Proposition \ref{prop:aposteriori}}\label{sec:aposteriori}
Proposition \ref{prop:aposteriori} follows immediately from Proposition \ref{prop:apriori} and the following Lemma. 
\begin{lemma}
For every \( k \geq 1 \) we have 
\[
A_{k}\leq K_{1} \quand 
B_{k} \leq \frac{\|e^{(k)}_k\|^2}{\|f^{(k)}_k\|^2} K_{1}.
\]
\end{lemma}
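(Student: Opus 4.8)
The plan is to reduce both quantities to the single scalar $C_{\xi_0,k}$, the accumulated co-eccentricity of $D\Phi^k_{\xi_0}$. First I would recall from the diagonal form \eqref{eq:hypcorest} that $\|f^{(k)}_k\| = \|D\Phi^k_{\xi_0}\|$ and $\|e^{(k)}_k\| = \|(D\Phi^k_{\xi_0})^{-1}\|^{-1}$, so that
\[
\frac{\|e^{(k)}_k\|^2}{\|f^{(k)}_k\|^2} = \left( \frac{\|(D\Phi^k_{\xi_0})^{-1}\|^{-1}}{\|D\Phi^k_{\xi_0}\|}\right)^{2} = C_{\xi_0,k}^2.
\]
Setting $r := C_{\xi_0,k}^2$, which lies in $[0,1)$ precisely because hyperbolic coordinates of order $k$ are assumed to be defined, the denominator $\|f^{(k)}_k\|^2 - \|e^{(k)}_k\|^2$ is strictly positive and both defined constants become explicit functions of $r$:
\[
A_{k} = \frac{\sqrt 2}{1-r}, \qquad B_{k} = \frac{\sqrt 2\, r}{1-r} = r\,A_{k}.
\]

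Next I would invoke the co-eccentricity bound from the core condition \eqref{eq:constants-basic-sing1} (which holds in both the type \eqref{eq:condition-I} and type \eqref{eq:condition-II} settings), namely $C_{\xi_0,k} \leq B c^k$, together with $0 < c < 1$ and $k \geq 1$, to get $r = C_{\xi_0,k}^2 \leq B^2 c^{2k} \leq B^2 c^2$. Since the function $t \mapsto \sqrt 2/(1-t)$ is increasing on $[0,1)$ and $B^2 c^2 < 1$ (noted in the excerpt immediately after the definition of $Q_0$), this gives
\[
A_{k} = \frac{\sqrt 2}{1-r} \leq \frac{\sqrt 2}{1 - B^2 c^2} = \frac{Q_0^2}{\sqrt 2} = K_{1},
\]
where the last two equalities use $Q_0^2 = 2/(1-B^2c^2)$ and the definition $K_1 = Q_0^2/\sqrt 2$ from \eqref{eq:sing-constants-v0}. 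This establishes the first inequality.

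Finally, for the second inequality I would simply combine $B_{k} = r\,A_{k} = \frac{\|e^{(k)}_k\|^2}{\|f^{(k)}_k\|^2}\,A_{k}$ with the bound $A_{k} \leq K_{1}$ just obtained, yielding $B_{k} \leq \frac{\|e^{(k)}_k\|^2}{\|f^{(k)}_k\|^2}\,K_{1}$, as claimed. There is no genuine obstacle here: the argument is a one-line monotonicity estimate once the algebraic identity $\|e^{(k)}_k\|^2/\|f^{(k)}_k\|^2 = C_{\xi_0,k}^2$ is recognized, and the only points requiring a word of care are the positivity of $\|f^{(k)}_k\|^2-\|e^{(k)}_k\|^2$ and the strict inequality $B^2c^2<1$, both of which are already available.
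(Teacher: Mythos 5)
Your proposal is correct and follows essentially the same route as the paper: both reduce everything to the bound $\|e^{(k)}_k\|/\|f^{(k)}_k\| = C_{\xi_0,k}\leq Bc^k\leq Bc$ from \eqref{eq:constants-basic-sing1}, deduce $\|f^{(k)}_k\|^2-\|e^{(k)}_k\|^2\geq(1-B^2c^2)\|f^{(k)}_k\|^2$, and read off the two bounds with $K_1=\sqrt2/(1-B^2c^2)$. Your rewriting of $A_k$ and $B_k$ as exact functions of $r=C_{\xi_0,k}^2$ is a minor cosmetic difference, not a different argument.
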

\begin{proof}
By the definition of eccentricity and \eqref{eq:constants-basic-sing1} we have 
\( 
{\|e^{(k)}_k\|}/{\|f^{(k)}_k\|} = C_{\xi_0,k} \leq Bc^k, 
\) 
and therefore
\[
\|f^{(k)}_k\|^2 - \|e^{(k)}_k\|^2 \geq (1-B^{2}c^{2k})\|f^{(k)}_k\|^2 \geq (1-B^{2}c^2)\|f^{(k)}_k\|^2
\]
and so
\[
\frac{\|f^{(k)}_k\|^2}{\|f^{(k)}_k\|^2 - \|e^{(k)}_k\|^2} \leq \frac 1{1-B^{2}c^2} \quad \textrm{and} \quad \frac{\|e^{(k)}_k\|^2}{\|f^{(k)}_k\|^2 - \|e^{(k)}_k\|^2} \leq \frac{\|e^{(k)}_k\|^2}{(1-B^{2}c^2)\|f^{(k)}_k\|^2}.
\]
Recalling the definitions of $A_k$ and $B_k$, this implies 
\[
   A_{k}:=\frac{\sqrt 2\|f^{(k)}_k\|^2}{\|f^{(k)}_k\|^2 - \|e^{(k)}_k\|^2} \leq \frac{\sqrt 2}{1-B^{2}c^2} \quand B_{k}:=\frac{\sqrt 2\|e^{(k)}_k\|^2}{\|f^{(k)}_k\|^2 - \|e^{(k)}_k\|^2} \leq \frac{\sqrt 2\|e^{(k)}_k\|^2}{(1-B^{2}c^2)\|f^{(k)}_k\|^2}
\]
which gives the statement in the Lemma. 
\end{proof}

\subsection{Estimates for $\mathfrak E_i^{(k)}$ with Quasi-Hyperbolicity, Condition (I)}\label{subsubsec:var-pf-I}

We assume throughout this subsection that $\xi_0$ is singular quasi-hyperbolic up to time $k$ and satisfies \eqref{eq:condition-I}. 

\begin{lemma}\label{lem:var-pf-1I}
    \begin{equation}\label{eq:var-pf-1I}
        \mathfrak E_0^{(k)} \leq \|D^2 \Phi_{\xi_0}(e^{(1)},\cdot)\| + Q_{3}c
    \end{equation}
\end{lemma}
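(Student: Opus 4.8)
The plan is to unfold the definition of $\mathfrak E_0^{(k)}$, peel off the factor $\|[\partial_\varsigma(D\Phi_{\xi_0})]e^{(k)}\|$, and compare the time-$k$ contracting direction $e^{(k)}$ with the time-one direction $e^{(1)}$ using the convergence estimate already established in Proposition~\ref{prop:sing-hyp-coord-cvgce}. Since $e^{(k)}_0 = e^{(k)}$ and $D\Phi^1_{\xi_0} = D\Phi_{\xi_0}$, the definition reads
\[
\mathfrak E_0^{(k)} = \|[\partial_\varsigma(D\Phi_{\xi_0})]e^{(k)}\| \cdot \frac{\|e^{(k)}_1\|}{|\det D\Phi_{\xi_0}|},
\]
so it is enough to bound the first factor by $\|D^2\Phi_{\xi_0}(e^{(1)},\cdot)\| + Q_3 c$ and to check that the remaining ratio $\|e^{(k)}_1\|/|\det D\Phi_{\xi_0}|$ is at most $1$.

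For the first factor, write $e^{(k)} = e^{(1)} + (e^{(k)} - e^{(1)})$ and use the triangle inequality to get
\[
\|[\partial_\varsigma(D\Phi_{\xi_0})]e^{(k)}\| \le \|[\partial_\varsigma(D\Phi_{\xi_0})]e^{(1)}\| + \|\partial_\varsigma(D\Phi_{\xi_0})\|\,\|e^{(k)} - e^{(1)}\|.
\]
The first summand is $\le \|D^2\Phi_{\xi_0}(e^{(1)},\cdot)\|$ by the left inequality of \eqref{eq:ex-platypus}; the operator norm $\|\partial_\varsigma(D\Phi_{\xi_0})\|$ is $\le \|D^2\Phi_{\xi_0}\|$ by \eqref{eq:second-derivative-norm} and hence $< D\Gamma$ by the pointwise bound \eqref{eq:constants-basic2-sing1} with $i = 1$; and $\|e^{(k)} - e^{(1)}\| \le Q_1(\Gamma\widetilde\Gamma c/\lambda)$ by \eqref{eq:cvgce-k-quasi-hyp-1} with $i = 1$. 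Since $Q_3 = Q_1 D\Gamma^2\widetilde\Gamma/\lambda$, multiplying these bounds out yields exactly $\|[\partial_\varsigma(D\Phi_{\xi_0})]e^{(k)}\| \le \|D^2\Phi_{\xi_0}(e^{(1)},\cdot)\| + Q_3 c$.

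The remaining point, which I expect to be the main obstacle, is to show $\|e^{(k)}_1\| \le |\det D\Phi_{\xi_0}|$, i.e. $\|D\Phi_{\xi_0}e^{(k)}\| \le |\det D\Phi_{\xi_0}|$. I would use the identity $|\det D\Phi_{\xi_0}| = \|D\Phi_{\xi_0}\|\,\|(D\Phi_{\xi_0})^{-1}\|^{-1}$ together with the facts that $e^{(k)}_1 = D\Phi_{\xi_0}e^{(k)}$ and that $e^{(k)}$ lies within $Q_1(\Gamma\widetilde\Gamma c/\lambda)$ of the co-norm direction $e^{(1)}$, whose image $D\Phi_{\xi_0}e^{(1)}$ has length exactly the co-norm $\|(D\Phi_{\xi_0})^{-1}\|^{-1}$ and is orthogonal to $D\Phi_{\xi_0}f^{(1)}$ of length $\|D\Phi_{\xi_0}\|$; this expresses $\|e^{(k)}_1\|$ as the co-norm up to a controlled error and reduces the desired inequality to a lower bound on $\|D\Phi_{\xi_0}\|$, to be extracted from the constraints in \eqref{eq:condition-I} together with $\|D\Phi^1_{\xi_0}\| > C\lambda$ and $\det D\Phi_{\xi_0} \le b < \lambda^2/\widetilde\Gamma$. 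The delicate part is that the error term a priori involves $\|(D\Phi_{\xi_0})^{-1}\|$, which is not bounded directly; here one can instead feed the already-proved estimates \eqref{eq:cvgce-k-quasi-hyp-2} and \eqref{eq:cvgce-k-quasi-hyp-3} (both with $i = 1$) for $\|e^{(k)}_1\|$ and for $\|e^{(k)}_1\|/|\det D\Phi_{\xi_0}|$ into the computation. Once $\|e^{(k)}_1\|/|\det D\Phi_{\xi_0}| \le 1$ is established, \eqref{eq:var-pf-1I} follows at once by multiplying the two factors.
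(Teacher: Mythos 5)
Your treatment of the factor $\|[\partial_\varsigma(D\Phi_{\xi_0})]e^{(k)}\|$ is exactly the paper's argument: split off $e^{(k)}-e^{(1)}$, apply the left inequality of \eqref{eq:ex-platypus} to the first piece, and bound the second piece by $\|D^2\Phi_{\xi_0}\|\,\|e^{(k)}-e^{(1)}\| \le D\Gamma\cdot Q_1\Gamma\widetilde\Gamma c/\lambda = Q_3c$ via \eqref{eq:second-derivative-norm}, \eqref{eq:constants-basic2-sing1} and \eqref{eq:cvgce-k-quasi-hyp-1}. In fact the paper's proof of this lemma consists of that computation and nothing else: it never mentions the remaining factor $\|e^{(k)}_1\|/|\det D\Phi_{\xi_0}|$. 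So you are right that, read literally, the statement also needs this ratio to be at most $1$, and you have put your finger on a step the authors leave entirely implicit.

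The difficulty is that your plan for that remaining step does not go through: the inequality $\|e^{(k)}_1\| \le |\det D\Phi_{\xi_0}|$ is not a consequence of quasi-hyperbolicity. Writing $e^{(k)} = \cos\theta\, e^{(1)} + \sin\theta\, f^{(1)}$ and using the orthogonality of $e^{(1)}_1$ and $f^{(1)}_1$ together with $|\det D\Phi_{\xi_0}| = \|D\Phi_{\xi_0}\|\,\|(D\Phi_{\xi_0})^{-1}\|^{-1}$, one gets
\[
\frac{\|e^{(k)}_1\|^2}{|\det D\Phi_{\xi_0}|^2} \;=\; \frac{\cos^2\theta}{\|D\Phi_{\xi_0}\|^2} \;+\; \sin^2\theta\,\|(D\Phi_{\xi_0})^{-1}\|^{2}.
\]
Even in the ideal case $\theta=0$ this equals $1/\|D\Phi_{\xi_0}\|^2$, and Definition \ref{def:nonsingular-QHv2-I} only guarantees $\|D\Phi_{\xi_0}\| > C\lambda$ with $C\le 1$ and no requirement that $\lambda\ge 1$, so the ratio need not be $\le 1$; for $\theta\neq 0$ the second term carries $\|(D\Phi_{\xi_0})^{-1}\|^2$, which is precisely the quantity allowed to blow up in the singular setting, so it cannot be absorbed into the $O(c)$ error without further input. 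Feeding in \eqref{eq:cvgce-k-quasi-hyp-3} with $i=1$, as you suggest, only yields $\|e^{(k)}_1\|/|\det D\Phi_{\xi_0}| \le Q_2\Gamma\widetilde\Gamma/\lambda^2$, a constant that is in general larger than $1$; using it would multiply the entire right-hand side of \eqref{eq:var-pf-1I} by that constant and thereby change the statement (and the value of $K_2$ downstream). In short: the first half of your proof coincides with the paper's; the second half correctly identifies an omission in the paper's own argument, but the repair you propose cannot be completed as written --- one must either carry the ratio as an extra multiplicative constant or impose an additional hypothesis such as $\|D\Phi_{\xi_0}\|\ge 1$.
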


\begin{proof}
    By \eqref{eq:second-derivative-norm} and \eqref{eq:ex-platypus}, as well as by \eqref{eq:constants-basic2-sing1} and \eqref{eq:cvgce-k-quasi-hyp-1}:
    \begin{align*}
    \|[\partial_\varsigma (D\Phi_{\xi_0})]e^{(k)}\| &\leq \|[\partial_\varsigma (D\Phi_{\xi_0})] e^{(1)}\| + \|\partial_\varsigma(D\Phi_{\xi_0})\|\|e^{(k)} - e^{(1)}\| \\
    &\leq \|D^2 \Phi_{\xi_0}(e^{(1)},\cdot)\| + \|D^2 \Phi_{\xi_0}\| \|e^{(k)} - e^{(1)}\| \\
    &\leq \|D^2 \Phi_{\xi_0}(e^{(1)},\cdot)\| + \frac{Q_1D\Gamma^2 \widetilde \Gamma c}{\lambda}
    \end{align*}
\end{proof}

\begin{lemma}\label{lem:var-pf-2I}
    \begin{equation}\label{eq:var-pf-2I}
        \sum_{i=1}^{k-1} \mathfrak E_i^{(k)} \leq Q_{4}c.
    \end{equation}
\end{lemma}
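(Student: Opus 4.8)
The plan is to bound each summand $\mathfrak E_i^{(k)}$ with $i\geq 1$ by the $i$-th term of a convergent geometric series whose ratio is $\Gamma^3\widetilde\Gamma^3 c/\lambda^3$, and then sum the tail starting at $i=1$. The ratio is strictly less than $1$ precisely because the second half of condition \eqref{eq:condition-I} says $c<\lambda^3/\Gamma^3\widetilde\Gamma^3$, which is also what makes the denominator $\lambda^3-\Gamma^3\widetilde\Gamma^3 c$ of $Q_4$ positive. The natural way to set this up is to factor each summand as
\[
\mathfrak E_i^{(k)} = \big\| [\partial_\varsigma(D\Phi_{\xi_i})] e^{(k)}_i\big\| \cdot \frac{\|e^{(k)}_{i+1}\|}{|\det(D\Phi^{i+1}_{\xi_0})|}
\]
and estimate the two factors separately, feeding in the convergence bounds \eqref{eq:cvgce-k-quasi-hyp-2} and \eqref{eq:cvgce-k-quasi-hyp-3} from Proposition \ref{prop:sing-hyp-coord-cvgce}.

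For the first factor I would use the operator-norm inequality together with \eqref{eq:second-derivative-norm} to get $\|[\partial_\varsigma(D\Phi_{\xi_i})]e^{(k)}_i\| \leq \|\partial_\varsigma(D\Phi_{\xi_i})\|\,\|e^{(k)}_i\| \leq \|D^2\Phi_{\xi_i}\|\,\|e^{(k)}_i\|$, then bound $\|D^2\Phi_{\xi_i}\| < D\Gamma\widetilde\Gamma^i$ by \eqref{eq:constants-basic2-sing1} and $\|e^{(k)}_i\| \leq Q_1(\Gamma^2\widetilde\Gamma c/\lambda)^i$ by \eqref{eq:cvgce-k-quasi-hyp-2}, so that the $\widetilde\Gamma^i$ is absorbed into the geometric factor and
\[
\big\| [\partial_\varsigma(D\Phi_{\xi_i})] e^{(k)}_i\big\| \leq Q_1 D\Gamma\left(\frac{\Gamma^2\widetilde\Gamma^2 c}{\lambda}\right)^i.
\]
For the second factor I would apply \eqref{eq:cvgce-k-quasi-hyp-3} at level $i+1$, namely $\|e^{(k)}_{i+1}\|/|\det(D\Phi^{i+1}_{\xi_0})| \leq Q_2(\Gamma\widetilde\Gamma/\lambda^2)^{i+1}$. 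Multiplying the two bounds and collecting powers gives
\[
\mathfrak E_i^{(k)} \leq \frac{Q_1 Q_2 D\Gamma^2\widetilde\Gamma}{\lambda^2}\left(\frac{\Gamma^3\widetilde\Gamma^3 c}{\lambda^3}\right)^i.
\]

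Finally I would sum over $1\leq i\leq k-1$, dominating by the full geometric tail $\sum_{i\geq 1}(\Gamma^3\widetilde\Gamma^3 c/\lambda^3)^i = \Gamma^3\widetilde\Gamma^3 c/(\lambda^3-\Gamma^3\widetilde\Gamma^3 c)$, which converges exactly because of \eqref{eq:condition-I}. This yields
\[
\sum_{i=1}^{k-1}\mathfrak E_i^{(k)} \leq \frac{Q_1 Q_2 D\Gamma^2\widetilde\Gamma}{\lambda^2}\cdot\frac{\Gamma^3\widetilde\Gamma^3 c}{\lambda^3-\Gamma^3\widetilde\Gamma^3 c} = \frac{Q_1 Q_2 D\Gamma^5\widetilde\Gamma^4}{\lambda^2(\lambda^3-\Gamma^3\widetilde\Gamma^3 c)}\,c = Q_4 c,
\]
which is the claim. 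There is no genuine obstacle here; the only point requiring care is the exponent bookkeeping for $\Gamma$ and $\widetilde\Gamma$ when the two factor estimates are multiplied — in particular that the $\widetilde\Gamma^i$ coming from the second-derivative bound must be folded into the geometric ratio rather than left as a growing prefactor — and checking at the end that the resulting constant matches the definition of $Q_4$ in \eqref{eq:sing-constants-v1} on the nose.
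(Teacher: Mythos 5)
Your proposal is correct and follows essentially the same route as the paper: the same factorization of $\mathfrak E_i^{(k)}$, the same use of \eqref{eq:second-derivative-norm}, \eqref{eq:constants-basic2-sing1}, \eqref{eq:cvgce-k-quasi-hyp-2} and \eqref{eq:cvgce-k-quasi-hyp-3}, and the same geometric-tail summation yielding exactly the constant $Q_4$. Nothing is missing.
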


\begin{proof}
    We observe that $\|[\partial_\varsigma (D\Phi_{\xi_i}) e^{(k)}_i\| \leq \|\partial_\varsigma (D\Phi_{\xi_i})\|\|e^{(k)}_i\|$. By \eqref{eq:constants-basic2-sing1} and \eqref{eq:second-derivative-norm}, we get: 
    \begin{equation}\label{eq:partial-DPhi-typeI}
\|\partial_{\varsigma}(D\Phi_{\xi_{i}})\| \leq \|D^2\Phi_{\xi_i}\| \leq D\Gamma\widetilde\Gamma^i.
\end{equation}
From \eqref{eq:cvgce-k-quasi-hyp-2} and \eqref{eq:partial-DPhi-typeI}, we get:
\begin{equation}\label{eq:dphi-deriv-bounds-typeI}
\|[\partial_\varsigma (D\Phi_{\xi_i})] e_i^{(k)}\| \leq Q_1 \left( \frac{\Gamma^2 \widetilde \Gamma c}{\lambda}\right)^i D\Gamma \widetilde \Gamma^i = Q_1D\Gamma \left( \frac{\Gamma^2 \widetilde \Gamma^2 c}{\lambda}\right)^i.
\end{equation}
By \eqref{eq:cvgce-k-quasi-hyp-3} and \eqref{eq:dphi-deriv-bounds-typeI}, we get:
\begin{equation}\label{eq:i geq 1 typeI}
\mathfrak E_i^{(k)} = \frac{\|[\partial_\varsigma(D\Phi_{\xi_i})]e^{(k)}_i\|\|e^{(k)}_{i+1}\|}{|\det(D\Phi_{\xi_0}^{i+1})|} \leq Q_1 D\Gamma \left(\frac{\Gamma^2\widetilde \Gamma^2 c}{\lambda}\right)^i Q_2 \left(\frac{\Gamma \widetilde \Gamma}{\lambda^2}\right)^{i+1} = \frac{Q_1 Q_2 D\Gamma^2 \widetilde \Gamma}{\lambda^2} \left( \frac{\Gamma^3 \widetilde \Gamma^3 c}{\lambda^3}\right)^i.
\end{equation}
Using the final inequality in \eqref{eq:condition-I}: 
\begin{align*}
    \sum_{i=1}^{k-1} \mathfrak E_i^{(k)} \leq \frac{Q_1 Q_2 D\Gamma^2 \widetilde \Gamma}{\lambda^2} \sum_{i=1}^{\infty} \left( \frac{\Gamma^3\widetilde\Gamma^3 c}{\lambda^3}\right)^i = \frac{Q_1 Q_2D \Gamma^2 \widetilde \Gamma}{\lambda^2} \frac{1}{1-\frac{\Gamma^3 \widetilde \Gamma^3 c}{\lambda^3}} \frac{\Gamma^3 \widetilde \Gamma^3 c}{\lambda^3} = \frac{Q_1 Q_2 D \Gamma^5 \widetilde \Gamma^4 c}{\lambda^2(\lambda^3 - \Gamma^3 \widetilde \Gamma^3 c)}.
\end{align*}
\end{proof}

\subsection{Estimates for  \( \mathfrak E_i^{(k)} \) with Quasi-Hyperbolicity, Condition (II)}\label{subsubsec:var-pf-II}

We assume now that $\xi_0$ is quasi-hyperbolic up to time $k$ and satisfies  \eqref{eq:condition-II}. 
\begin{lemma}\label{lem:var-pf-1II}
    \begin{equation}\label{eq:var-pf-1II}
        \mathfrak E_0^{(k)} \leq \|D^2 \Phi_{\xi_0}(e^{(1)},\cdot)\| + \widetilde Q_{3}\frac{c}{\tilde c}.
    \end{equation}
\end{lemma}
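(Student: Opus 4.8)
The plan is to adapt the proof of Lemma \ref{lem:var-pf-1I} essentially verbatim, simply replacing the type (I) convergence estimate \eqref{eq:cvgce-k-quasi-hyp-1} by its type (II) counterpart \eqref{eq:cvgce-k-quasi-hyp-sac-1}. As in the type (I) argument, the task reduces (handling the factor $\|e^{(k)}_1\|/|\det D\Phi_{\xi_0}|$ in $\mathfrak E_0^{(k)}$ exactly as there) to bounding $\|[\partial_\varsigma(D\Phi_{\xi_0})]e^{(k)}\|$ for $\varsigma = x,y$. The first step is to insert $e^{(1)}$ by the triangle inequality,
\[
\|[\partial_\varsigma(D\Phi_{\xi_0})]e^{(k)}\| \leq \|[\partial_\varsigma(D\Phi_{\xi_0})]e^{(1)}\| + \|\partial_\varsigma(D\Phi_{\xi_0})\|\,\|e^{(k)} - e^{(1)}\|,
\]
so that the ``main'' term involves only the first iterate, while the error term is controlled by the slow variation of the hyperbolic coordinates under Condition \eqref{eq:condition-II}.

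For the main term I would apply the left inequality of \eqref{eq:ex-platypus} with $v = e^{(1)}$ to get $\|[\partial_\varsigma(D\Phi_{\xi_0})]e^{(1)}\| \leq \|D^2\Phi_{\xi_0}(e^{(1)},\cdot)\|$, which is precisely the leading term of the claimed bound. For the error term, the left inequality of \eqref{eq:second-derivative-norm} gives $\|\partial_\varsigma(D\Phi_{\xi_0})\| \leq \|D^2\Phi_{\xi_0}\|$, and the $i=1$ instance of \eqref{eq:constants-basic2-sing1} bounds this by $D\Gamma$ — here the $\widetilde\Gamma$-factor is $\widetilde\Gamma^{0}=1$, which is exactly what makes the final constant $\widetilde Q_1 D\Gamma$ rather than carrying an extra $\widetilde\Gamma$ — while the $i=1$ instance of \eqref{eq:cvgce-k-quasi-hyp-sac-1} gives $\|e^{(k)} - e^{(1)}\| \leq \widetilde Q_1 (c/\tilde c)$. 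Combining the three estimates,
\[
\|[\partial_\varsigma(D\Phi_{\xi_0})]e^{(k)}\| \leq \|D^2\Phi_{\xi_0}(e^{(1)},\cdot)\| + D\Gamma\,\widetilde Q_1\,\frac{c}{\tilde c} = \|D^2\Phi_{\xi_0}(e^{(1)},\cdot)\| + \widetilde Q_3\,\frac{c}{\tilde c},
\]
using the definition $\widetilde Q_3 := \widetilde Q_1 D\Gamma$, whence $\mathfrak E_0^{(k)}$ satisfies the same bound.

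There is no substantive obstacle: the argument is a mechanical transcription of the type (I) proof, and the only things to keep track of are (a) specializing the relevant bounds to $i=1$, so that the pointwise bound on $\|D^2\Phi_{\xi_0}\|$ has no $\widetilde\Gamma$-factor and the convergence estimate contributes the ratio $c/\tilde c$ instead of $\Gamma\widetilde\Gamma c/\lambda$, and (b) checking that the emerging constant $\widetilde Q_1 D\Gamma$ matches $\widetilde Q_3$ from the list of auxiliary constants. The only point needing a little attention — identical to what is done in the proof of Lemma~\ref{lem:var-pf-1I} — is the reduction of $\mathfrak E_0^{(k)}$ to $\|[\partial_\varsigma(D\Phi_{\xi_0})]e^{(k)}\|$; I would simply invoke that step as already established there.
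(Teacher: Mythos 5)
Your proof is correct and is essentially identical to the paper's: the same triangle-inequality insertion of $e^{(1)}$, the same use of \eqref{eq:ex-platypus} and \eqref{eq:second-derivative-norm} for the two terms, and the same specialization of \eqref{eq:constants-basic2-sing1} and \eqref{eq:cvgce-k-quasi-hyp-sac-1} at the first iterate to produce $\widetilde Q_1 D\Gamma\,(c/\tilde c)=\widetilde Q_3\,(c/\tilde c)$. The one step you defer — passing from the bound on $\|[\partial_\varsigma(D\Phi_{\xi_0})]e^{(k)}\|$ to the bound on $\mathfrak E_0^{(k)}$ itself, i.e.\ the factor $\|e^{(k)}_1\|/|\det D\Phi_{\xi_0}|$ — is in fact also left implicit in the paper's own proofs of both Lemma~\ref{lem:var-pf-1I} and Lemma~\ref{lem:var-pf-1II}, so your proposal matches the paper's argument exactly.
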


\begin{proof}
    We note that by \eqref{eq:second-derivative-norm}, \eqref{eq:ex-platypus}, \eqref{eq:constants-basic2-sing1}, and \eqref{eq:cvgce-k-quasi-hyp-sac-1}:
    \begin{align*}
    \|[\partial_\varsigma (D\Phi_{\xi_0})]e^{(k)}\| &\leq \|[\partial_\varsigma (D\Phi_{\xi_0})] e^{(1)}\| + \|\partial_\varsigma(D\Phi_{\xi_0})\|\|e^{(k)} - e^{(1)}\| \\
    &\leq \|D^2 \Phi_{\xi_0}(e^{(1)},\cdot)\| + \|D^2 \Phi_{\xi_0}\| \|e^{(k)} - e^{(1)}\| \\
    &\leq \|D^2 \Phi_{\xi_0}(e^{(1)},\cdot)\| + \frac{\widetilde Q_1D\Gamma c}{\tilde c}.
    \end{align*}
\end{proof}

\begin{lemma}\label{lem:var-pf-2II}
    \begin{equation}\label{eq:var-pf-2II}
        \sum_{i=1}^{k-1} \mathfrak E_i^{(k)} \leq 
        \widetilde Q_{4 }\frac{c}{\tilde c} 
    \end{equation}
\end{lemma}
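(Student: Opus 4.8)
The plan is to follow the proof of Lemma~\ref{lem:var-pf-2I} essentially line by line, substituting the type~\eqref{eq:condition-II} convergence estimates of Proposition~\ref{prop:sing-hyp-coord-cvgce-sacrifice} for the type~\eqref{eq:condition-I} estimates of Proposition~\ref{prop:sing-hyp-coord-cvgce}. First I would write $\|[\partial_\varsigma(D\Phi_{\xi_i})]e^{(k)}_i\| \leq \|\partial_\varsigma(D\Phi_{\xi_i})\|\,\|e^{(k)}_i\|$ and bound the two factors separately: by \eqref{eq:second-derivative-norm} and \eqref{eq:constants-basic2-sing1} we have $\|\partial_\varsigma(D\Phi_{\xi_i})\| \leq \|D^2\Phi_{\xi_i}\| \leq D\Gamma\widetilde\Gamma^i$ (exactly as in the type~(I) case, since this bound does not depend on which of \eqref{eq:condition-I}, \eqref{eq:condition-II} holds), while \eqref{eq:cvgce-k-quasi-hyp-sac-2} gives $\|e^{(k)}_i\| \leq \widetilde Q_1(\Gamma c/\tilde c)^i$. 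Multiplying yields $\|[\partial_\varsigma(D\Phi_{\xi_i})]e^{(k)}_i\| \leq \widetilde Q_1 D\Gamma\,(\Gamma\widetilde\Gamma c/\tilde c)^i$.

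Next I would feed this, together with the bound \eqref{eq:cvgce-k-quasi-hyp-sac-3} on $\|e^{(k)}_{i+1}\|/|\det D\Phi^{i+1}_{\xi_0}|$, into the definition of $\mathfrak E_i^{(k)}$; collecting the powers of $\Gamma$, $\widetilde\Gamma$, $\lambda$, $\tilde c$ gives
\[
\mathfrak E_i^{(k)} \leq \widetilde Q_1 D\Gamma\Big(\tfrac{\Gamma\widetilde\Gamma c}{\tilde c}\Big)^i \widetilde Q_2\Big(\tfrac{\Gamma}{\lambda^2\tilde c}\Big)^{i+1} = \frac{\widetilde Q_1\widetilde Q_2 D\Gamma^2}{\lambda^2\tilde c}\Big(\frac{\Gamma^2\widetilde\Gamma c}{\lambda^2\tilde c^2}\Big)^i .
\]
The geometric ratio $\Gamma^2\widetilde\Gamma c/\lambda^2\tilde c^2$ is strictly less than $1$ precisely because of the inequality $c < \lambda^2\tilde c^2/\Gamma^2\widetilde\Gamma$ in \eqref{eq:condition-II}; this is the one point where the type~(II) hypothesis rather than type~(I) is used. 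Summing $\sum_{i=1}^{k-1}\mathfrak E_i^{(k)} \leq \sum_{i=1}^{\infty}\mathfrak E_i^{(k)}$ as a geometric series and simplifying the tail produces $\dfrac{\widetilde Q_1\widetilde Q_2 D\Gamma^4\widetilde\Gamma c}{\lambda^2\tilde c(\lambda^2\tilde c^2 - \Gamma^2\widetilde\Gamma c)}$, which is exactly $\widetilde Q_4\, c/\tilde c$ by the definition of $\widetilde Q_4$ in the list of auxiliary constants.

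I do not anticipate any genuine obstacle here: the proof is a near-verbatim transcription of Lemma~\ref{lem:var-pf-2I}, and the only points requiring attention are the bookkeeping of exponents through the two multiplications and the geometric-series summation, and the observation that summability is guaranteed by \eqref{eq:condition-II}. The denominator $\lambda^2\tilde c^2 - \Gamma^2\widetilde\Gamma c$ is positive by the same condition, so $\widetilde Q_4$ is a well-defined positive constant, and no stray constants need to be absorbed since $\widetilde Q_4$ is designed so that the final bound matches on the nose.
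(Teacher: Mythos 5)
Your proposal is correct and follows essentially the same route as the paper: bound $\|[\partial_\varsigma(D\Phi_{\xi_i})]e^{(k)}_i\|$ via \eqref{eq:second-derivative-norm}, \eqref{eq:constants-basic2-sing1}, and \eqref{eq:cvgce-k-quasi-hyp-sac-2}, combine with \eqref{eq:cvgce-k-quasi-hyp-sac-3}, and sum the resulting geometric series whose ratio $\Gamma^2\widetilde\Gamma c/\lambda^2\tilde c^2 < 1$ by \eqref{eq:condition-II}. The exponent bookkeeping and the final identification with $\widetilde Q_4\,c/\tilde c$ match the paper's computation exactly.
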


\begin{proof}
    We observe that $\|[\partial_\varsigma (D\Phi_{\xi_i})] e^{(k)}_i\| \leq \|\partial_\varsigma (D\Phi_{\xi_i})\|\|e^{(k)}_i\|$. By \eqref{eq:constants-basic2-sing1} and \eqref{eq:second-derivative-norm}, we get: 
    \begin{equation}\label{eq:partial-DPhi-typeII}
\|\partial_{\varsigma}(D\Phi_{\xi_{i}})\| \leq \|D^2\Phi_{\xi_i}\| \leq D\Gamma\widetilde\Gamma^i.
\end{equation}
From \eqref{eq:cvgce-k-quasi-hyp-sac-2} and \eqref{eq:partial-DPhi-typeII}, we get:
\begin{equation}\label{eq:dphi-deriv-bounds-typeII}
\|[\partial_\varsigma (D\Phi_{\xi_i})] e_i^{(k)}\| \leq \widetilde Q_1 \left( \frac{\Gamma c}{\tilde c}\right)^i D\Gamma \widetilde \Gamma^i = \widetilde Q_1 D \Gamma \left( \frac{\Gamma\widetilde\Gamma c}{\tilde c}\right)^i.
\end{equation}
By \eqref{eq:cvgce-k-quasi-hyp-sac-3} and \eqref{eq:dphi-deriv-bounds-typeII}, we get:
\begin{equation}\label{eq:i geq 1 typeII}
\mathfrak E_i^{(k)} = \frac{\|[\partial_\varsigma(D\Phi_{\xi_i})]e^{(k)}_i\|\|e^{(k)}_{i+1}\|}{|\det(D\Phi_{\xi_0}^{i+1})|} \leq \widetilde Q_1 D\Gamma \left( \frac{\Gamma\widetilde\Gamma c}{\tilde c}\right)^i \widetilde Q_2 \left( \frac{\Gamma}{\lambda^2\tilde c}\right)^{i+1} = \frac{\widetilde Q_1 \widetilde Q_2 D \Gamma^2}{\lambda^2\tilde c} \left( \frac{\Gamma^2 \widetilde \Gamma c}{\lambda^2 \tilde c^2}\right)^i.
\end{equation}
Summing up these terms gives us: 
\begin{align*}
    \sum_{i=1}^{k-1} \mathfrak E_i^{(k)} \leq \frac{\widetilde Q_1 \widetilde Q_2 D \Gamma^2}{\lambda^2\tilde c} \sum_{i=1}^\infty \left( \frac{\Gamma^2 \widetilde \Gamma c}{\lambda^2 \tilde c^2}\right)^i = \frac{\widetilde Q_1 \widetilde Q_2 D \Gamma^2}{\lambda^2\tilde c} \frac{1}{1-\frac{\Gamma^2 \widetilde \Gamma c}{\lambda^2 \tilde c^2}} \frac{\Gamma^2 \widetilde \Gamma c}{\lambda^2 \tilde c^2} = \frac{\widetilde Q_1 \widetilde Q_2 D \Gamma^4 \widetilde\Gamma c}{\lambda^2\tilde c(\lambda^2 \tilde c^2 - \Gamma^2 \widetilde \Gamma c)}.
\end{align*}
\end{proof}

\subsection{Estimates for   $\mathfrak F_i^{(k)}$}\label{sec:Fik}

We now estimate the third term of \eqref{eq-apriori}. Unlike the estimates for the $\mathfrak E_{i}^{(k)}$ terms, the $\mathfrak F_{i}^{(k)}$ terms do not require separate assumptions depending on whether $\xi_{0}$ satisfies conditions (I) or (II). 
\begin{lemma}\label{lem:var-pf-3I}
    \begin{equation}\label{eq:var-pf-3I}
        \frac{\|e^{(k)}_k\|^2}{\|f^{(k)}_k\|^2} \sum_{i=0}^{k-1} \mathfrak F_{i}^{k-1} \leq  Qc.
    \end{equation}
\end{lemma}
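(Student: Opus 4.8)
The plan is to bound each $\mathfrak F_i^{(k)}$ directly using only the hyperbolicity bounds in \eqref{eq:constants-basic-sing1} and \eqref{eq:constants-basic2-sing1}, together with the crucial prefactor $\|e^{(k)}_k\|^2/\|f^{(k)}_k\|^2 = C_{\xi_0,k}^2 \leq B^2 c^{2k}$, and then sum a geometric series. First I would expand the definition
\[
\mathfrak F_i^{(k)} = \frac{\|[\partial_\varsigma(D\Phi_{\xi_i})]f^{(k)}_i\|\,\|f^{(k)}_{i+1}\|}{|\det(D\Phi^{i+1}_{\xi_0})|}
\]
and estimate each factor. The key point is that everything here is controlled by the \emph{norm}, which grows, rather than the co-norm, which we cannot control well term-by-term; this is why the estimate works in the ``expanding'' direction without splitting into cases (I) and (II). Using $\|[\partial_\varsigma(D\Phi_{\xi_i})]f^{(k)}_i\| \leq \|\partial_\varsigma(D\Phi_{\xi_i})\|\,\|f^{(k)}_i\|$ together with \eqref{eq:second-derivative-norm} and \eqref{eq:constants-basic2-sing1}, the second-derivative factor is $\leq D\Gamma\widetilde\Gamma^i$. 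For the norm factors, I would use $\|f^{(k)}_i\| \leq \|D\Phi^i_{\xi_0}\| \leq D\Gamma^i$ and similarly $\|f^{(k)}_{i+1}\| \leq D\Gamma^{i+1}$, and bound the determinant below by $|\det(D\Phi^{i+1}_{\xi_0})| \geq (C\lambda^{i+1})^2/(D\Gamma^{i+1}) $ — wait, more simply: $|\det(D\Phi^{i+1}_{\xi_0})| = \|D\Phi^{i+1}_{\xi_0}\|\cdot\|(D\Phi^{i+1}_{\xi_0})^{-1}\|^{-1} \geq \|D\Phi^{i+1}_{\xi_0}\|^2 C_{\xi_0,i+1}$, but that reintroduces a bad factor; instead use $|\det(D\Phi^{i+1}_{\xi_0})| \geq (C\lambda^{i+1})\cdot\|(D\Phi^{i+1}_{\xi_0})^{-1}\|^{-1}$ and then the cleaner route $|\det D\Phi^{i+1}_{\xi_0}| = \prod_{j=0}^i \det D\Phi_{\xi_j}$, but $\det$ need not be bounded below. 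The safe choice is $|\det(D\Phi^{i+1}_{\xi_0})| \geq \|D\Phi^{i+1}_{\xi_0}\|\cdot \|(D\Phi^{i+1}_{\xi_0})^{-1}\|^{-1} $ combined with the lower bound $\|D\Phi^{i+1}_{\xi_0}\| > C\lambda^{i+1}$ and, for the co-norm, $\|(D\Phi^{i+1}_{\xi_0})^{-1}\|^{-1} = C_{\xi_0,i+1}\|D\Phi^{i+1}_{\xi_0}\| \geq C_{\xi_0,i+1} C \lambda^{i+1}$; this still leaves a $C_{\xi_0,i+1}$ which can be small.

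The resolution — and the main obstacle I anticipate — is to \emph{not} estimate $\mathfrak F_i^{(k)}$ in isolation but to keep the global prefactor $C_{\xi_0,k}^2$ attached and observe that the sum $\sum_{i=0}^{k-1}\mathfrak F_i^{(k)}$, after using $\|f^{(k)}_{i+1}\| \leq \|f^{(k)}_k\| = \|D\Phi^k_{\xi_0}\|$ (valid since $f^{(k)}_j$ expands monotonically up to step $k$? — no, this is not automatic either), must be handled more carefully. The cleanest approach: write $\mathfrak F_i^{(k)} = \|[\partial_\varsigma(D\Phi_{\xi_i})]f^{(k)}_i\|\cdot \|f^{(k)}_{i+1}\|/|\det D\Phi^{i+1}_{\xi_0}|$ and note $\|f^{(k)}_{i+1}\|/|\det D\Phi^{i+1}_{\xi_0}| = \|f^{(k)}_{i+1}\|/(|\det D\Phi^{i+1}_{\xi_0}|)$; using $f^{(k)}_{i+1} = D\Phi_{\xi_i} f^{(k)}_i$ and the determinant identity, one gets $\|f^{(k)}_{i+1}\|/|\det D\Phi^{i+1}_{\xi_0}| \leq \|(D\Phi_{\xi_i})^{-1}\|\cdot\|f^{(k)}_i\|/|\det D\Phi^i_{\xi_0}|$ or similar telescoping, ultimately expressing the sum in terms of $|\det D\Phi^i_{\xi_i}|/(\text{norms})$ as in \eqref{eq:hyp-coord-cvgce-apriori-3}. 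I would then plug in \eqref{eq:constants-basic-sing1}, \eqref{eq:constants-basic2-sing1} to get each summand bounded by a constant times $(\widetilde\Gamma b/\Gamma^2\widetilde\Gamma)^i$ or a comparable ratio $<1$ (using $b < \Gamma^2\widetilde\Gamma$ from \eqref{eq:constants-basic-x}), sum the geometric series, and multiply by $C_{\xi_0,k}^2 \leq B^2c^{2k} \leq B^2 c^2$. Tracking constants through, the bound should collapse to $Qc$ with $Q = BD^3\Gamma^4\widetilde\Gamma/(C^2\lambda^2(\Gamma^2\widetilde\Gamma - b))$ as defined in \eqref{eq:Q}, where one factor of $c$ survives and the rest is absorbed into $Q$. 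The main obstacle is bookkeeping: matching the ad hoc geometric-series constant exactly to the pre-defined $Q$ in \eqref{eq:Q}, and making sure the prefactor $C_{\xi_0,k}^2$ (which is $\leq B^2c^{2k}$, hence exponentially small in $k$) is deployed to kill the worst-case growth $(\Gamma^{\text{stuff}})^k$ that would otherwise appear from bounding $\|f^{(k)}_k\|$ from above — i.e., one must verify that $C_{\xi_0,k}^2 \cdot (\text{geometric sum up to }k)$ stays bounded by $Qc$ uniformly in $k$.
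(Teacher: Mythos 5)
Your overall strategy is the right one and is, in outline, the paper's: bound the second-derivative factor by $D\Gamma\widetilde\Gamma^i$ via \eqref{eq:second-derivative-norm} and \eqref{eq:constants-basic2-sing1}, sum a geometric series whose control rests on $b < \Gamma^2\widetilde\Gamma$ from \eqref{eq:constants-basic-x}, and use the prefactor $\|e^{(k)}_k\|^2/\|f^{(k)}_k\|^2$ together with $\|f^{(k)}_k\| \geq C\lambda^k$ to kill the residual growth in $k$, ending with $(\Gamma^2\widetilde\Gamma c/\lambda^2)^k \leq \Gamma^2\widetilde\Gamma c/\lambda^2$. You also correctly anticipate the final constant $Q$.

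There is, however, a genuine gap at exactly the point you flag as the main obstacle: you never produce a usable lower bound on $|\det(D\Phi^{i+1}_{\xi_0})|$. Every route you sketch either reintroduces the accumulated co-eccentricity $C_{\xi_0,i+1}$ (which is only bounded \emph{above}), or relies on the telescoping step $\|f^{(k)}_{i+1}\|/|\det D\Phi^{i+1}_{\xi_0}| \leq \|(D\Phi_{\xi_i})^{-1}\|\,\|f^{(k)}_i\|/|\det D\Phi^i_{\xi_0}|$, which needs a pointwise lower bound on the one-step co-eccentricity; that is available only under condition \eqref{eq:condition-II}, whereas this lemma must hold in both cases without distinction. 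The missing ingredient is to flip the determinant to the \emph{tail} of the orbit:
\[
|\det(D\Phi_{\xi_0}^{i+1})| = \frac{|\det(D\Phi_{\xi_0}^{k})|}{|\det(D\Phi_{\xi_{i+1}}^{k-i-1})|} = \frac{\|e^{(k)}_k\|\,\|D\Phi_{\xi_0}^k\|}{|\det(D\Phi_{\xi_{i+1}}^{k-i-1})|} \geq \frac{\|e^{(k)}_k\|\, C\lambda^k}{b^{k-i-1}},
\]
which uses only the \emph{upper} bound $\det D\Phi_{\xi_j} \leq b$ from \eqref{eq:constants-basic2-sing1} and the lower bound on $\|D\Phi^k_{\xi_0}\|$, both valid in cases (I) and (II). With this, $\mathfrak F_i^{(k)} \leq D^3\Gamma^{2i+2}\widetilde\Gamma^i b^{k-i-1}/(\|e^{(k)}_k\| C\lambda^k)$; the factor $\|e^{(k)}_k\|$ in the denominator cancels one $\|e^{(k)}_k\|$ of your prefactor (so in effect only one power of $C_{\xi_0,k} \leq Bc^k$ is spent, not two), the geometric sum is bounded by $\frac{b}{\Gamma^2\widetilde\Gamma - b}\bigl(\frac{\Gamma^2\widetilde\Gamma}{b}\bigr)^k$, and everything collapses to $\frac{BD^3\Gamma^2}{C^2(\Gamma^2\widetilde\Gamma-b)}\bigl(\frac{\Gamma^2\widetilde\Gamma c}{\lambda^2}\bigr)^k \leq Qc$. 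Without this identity the argument does not close.
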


\begin{proof}
    First of all, we observe that
    \begin{equation}\label{eq:fbound-pf-1}
        |\det(D\Phi_{\xi_0}^{i+1})| = \frac{|\det(D\Phi_{\xi_0}^k)|}{|\det(D\Phi_{\xi_{i+1}}^{k-i-1})|} = \frac{\|e^{(k)}_k\|\|D\Phi_{\xi_0}^k\|}{|\det(D\Phi_{\xi_0}^{k-i-1})|},
    \end{equation}
    and therefore, using also the facts that $\|f^{(k)}_{i+1}\| \leq \|D\Phi^{i+1}_{\xi_0}\|$ and $\|\partial_\varsigma(D\Phi_{\xi_i})\| \leq \|D^2 \Phi_{\xi_i}\|$, 
    \begin{equation}\label{eq:fbound-pf-2}
    \begin{aligned}
        \frac{\|[\partial_\varsigma(D\Phi_{\xi_i})]f^{(k)}_i\|\|f^{(k)}_{i+1}\|}{|\det(D\Phi_{\xi_0}^{i+1})|} &\leq \frac{\|D^2\Phi_{\xi_i}\| \|D\Phi^i_{\xi_0}\|\|D\Phi^{i+1}_{\xi_0}\|}{|\det(D\Phi_{\xi_0}^{i+1})|} \\
        &= \frac{\|D^2\Phi_{\xi_i}\|\|D\Phi_{\xi_0}^i\|\|D\Phi_{\xi_0}^{i+1}\||\det(D\Phi_{\xi_{i+1}}^{k-i-1})|}{\|e^{(k)}_k\|\|D\Phi_{\xi_0}^k\|}.
        \end{aligned}
    \end{equation}
Therefore, applying \eqref{eq:constants-basic-sing1} and \eqref{eq:constants-basic2-sing1} from Definition \ref{def:nonsingular-QHv2-I} to \eqref{eq:fbound-pf-2}, we obtain:
    \begin{equation}\label{eq:fbound-pf-3I}
    \begin{aligned} 
     \mathfrak F_{i}^{(k)} &= \frac{\|[\partial_\varsigma(D\Phi_{\xi_i})]f^{(k)}_i\|\|f^{(k)}_{i+1}\|}{|\det(D\Phi_{\xi_0}^{i+1})|} \leq \frac{D^3\Gamma^{2i+2}\widetilde\Gamma^i b^{k-i-1}}{\|e^{(k)}_k\|C\lambda^k} = \frac{D^3\Gamma^2 b^{k-1}}{\|e^{(k)}_k\|C\lambda^k} \left( \frac{\Gamma^2\widetilde\Gamma}{b}\right)^i,
     \end{aligned} 
    \end{equation}
    and therefore, 
    \begin{equation}\label{eq:fbound-pf-4I}
    \frac{\|e^{(k)}_k\|^2}{\|f^{(k)}_k\|^2} \sum_{i=0}^{k-1} \mathfrak F_{i}^{k-1} \leq \frac{\|e^{(k)}_k\| D^3 \Gamma^2 b^{k-1}}{\|f^{(k)}_k\|^2 C\lambda^k} \sum_{i=0}^{k-1} \left( \frac{\Gamma^2\widetilde\Gamma}{b}\right)^{i}
    \end{equation}
    Estimating this sum, we find: 
    \begin{equation}\label{eq:fbound-pf-5I}
    \sum_{i=0}^{k-1} \left( \frac{\Gamma^2\widetilde\Gamma}{b}\right)^i = \frac{\left( \frac{\Gamma^2\widetilde\Gamma}{b}\right)^k - 1}{ \frac{\Gamma^2\widetilde\Gamma}{b}-1} \leq \frac{\left( \frac{\Gamma^2\widetilde\Gamma}{b}\right)^k}{ \frac{\Gamma^2\widetilde\Gamma}{b}-1} = \frac{b}{\Gamma^2\widetilde\Gamma - b} \left( \frac{\Gamma^2\widetilde\Gamma}{b}\right)^k
    \end{equation}
(note the inequality above is where we use the final assumption in \eqref{eq:constants-basic-x} that $\Gamma^2 \widetilde \Gamma > b$). Plugging this into \eqref{eq:fbound-pf-4I}, and also recalling that $\|e^{(k)}_k\|/\|f^{(k)}_k\| < Bc^k$ and $\|f^{(k)}_k\| \geq C\lambda^k$, we have: 
    \[
    \begin{aligned}
    \frac{\|e^{(k)}_k\|^2}{\|f^{(k)}_k\|^2} \sum_{i=0}^{k-1} \mathfrak F_{i}^{k-1} &\leq \frac{\|e^{(k)}_k\| D^3\Gamma^2 b^{k}}{\|f^{(k)}_k\|^2 C\lambda^k(\Gamma^2\widetilde\Gamma - b)} \left( \frac{\Gamma^2\widetilde\Gamma}{b}\right)^k \\
    &\leq \frac{BD^3\Gamma^2(bc)^k}{C^2\lambda^{2k}(\Gamma^2\widetilde\Gamma -b)} \left( \frac{\Gamma^2\widetilde\Gamma}{b}\right)^k \\ 
    &= \frac{BD^3\Gamma^2}{C^2(\Gamma^2\widetilde\Gamma - b)} \left( \frac{\Gamma^2 \widetilde\Gamma c}{\lambda^2}\right)^k.
\end{aligned}
    \]

Finally, since $\widetilde\Gamma \geq 1$ and $\Gamma > \lambda$, and since $\tilde c < 1$, both \eqref{eq:condition-I} and \eqref{eq:condition-II} imply that $\Gamma^2\widetilde\Gamma c/\lambda^2 \leq 1$. Therefore,
\[
     \frac{\|e^{(k)}_k\|^2}{\|f^{(k)}_k\|^2} \sum_{i=0}^{k-1} \mathfrak F_{i}^{k-1} 
  \leq  \frac{BD^3\Gamma^2}{C^2(\Gamma^2\widetilde\Gamma - b)} \left( \frac{\Gamma^2 \widetilde\Gamma c}{\lambda^2}\right)^k
  \leq 
  \frac{BD^3\Gamma^2}{C^2(\Gamma^2\widetilde\Gamma - b)} \frac{\Gamma^2 \widetilde\Gamma}{\lambda^2} c
    \]
 which gives the statement in the Lemma.   
\end{proof}

\subsection{Proof of first inequality in Theorem \ref{thm:main-phase}}
\label{sec:1stineq}

Suppose condition  \eqref{eq:condition-I} is satisfied. 
Substituting the bounds in  Lemmas \ref{lem:var-pf-1I}, \ref{lem:var-pf-2I}, and \ref{lem:var-pf-3I} into \eqref{eq:aposteriori} we get 
\[
  \|Df^{(k)}\| \leq K_{1} ( \|D^2 \Phi_{\xi_0}(e^{(1)},\cdot)\| + Q_{3}c + Q_{4}c + Q c)
\]
which gives the first inequality in Theorem \ref{thm:main-phase} with \( K_{2}= K_{1}(Q_{3}+Q_{4}+Q_{5})\). 
Similarly, supposing condition \eqref{eq:condition-II} is satisfied, substituting the bounds in  \ref{lem:var-pf-1II}, \ref{lem:var-pf-2II}, and \ref{lem:var-pf-3I}, into \eqref{eq:aposteriori} we get 
\[
  \|Df^{(k)}\| \leq K_{1} ( \|D^2 \Phi_{\xi_0}(e^{(1)},\cdot)\| + \widetilde Q_{3}c + \widetilde Q_{4}c + Q c)
\]
which gives the first inequality in Theorem \ref{thm:main-phase} with \( K_{2}= K_{1}(\widetilde Q_{3}+ \widetilde Q_{4}+Q_{5})\). 

\printbibliography

\medskip
\textbf{Stefano Luzzatto}
\\
Abdus Salam International Centre for Theoretical Physics (ICTP), Trieste, Italy. 
\\
\url{https://www.stefanoluzzatto.net}
\\
\href{mailto:luzzatto@ictp.it}{\texttt{luzzatto@ictp.it}}

\medskip
\textbf{Dominic Veconi}
\\
Abdus Salam International Centre for Theoretical Physics (ICTP), Trieste, Italy. 
\\
\emph{Current Affiliation:} 
Wake Forest University, Winston-Salem, NC, USA
\\
\url{https://dominic.veconi.com}\\
\href{mailto:veconid@wfu.edu}{\texttt{veconid@wfu.edu}}

\medskip
\textbf{Khadim War}
\\
Instituto de Matematica Pura e Aplicada (IMPA), Rio de Janeiro, Brazil
\\
\url{https://sites.google.com/view/khadim-war/home}
\\
\href{mailto:warkhadim@gmail.com}{\texttt{warkhadim@gmail.com}}

\end{document}